\definecolor{gr}{rgb}   {0.,   0.69,   0.23 }
\definecolor{bl}{rgb}   {0.,   0.5,   1. }
\definecolor{mg}{rgb}   {0.85,  0.,    0.85}
\definecolor{yl}{rgb}   {0.8,  0.7,   0.}
\definecolor{or}{rgb}  {0.7,0.2,0.2}
\newtheorem{theorem}{Theorem} [section]
\newtheorem{THM:main}{Theorem}
\newtheorem{lemma}[theorem]{Lemma}
\newtheorem{proposition}[theorem]{Proposition}
\newtheorem{remark}[theorem]{Remark}
\newtheorem{definition}[theorem]{Definition}
\newtheorem{corollary}[theorem]{Corollary}
\newcommand{\noi}{\noindent}
\newcommand{\Z}{\mathbb{Z}}
\newcommand{\R}{\mathbb{R}}
\newcommand{\T}{\mathbb{T}}
\newcommand{\dl}{\delta}
\newcommand{\eps}{\varepsilon}
\newcommand{\ld}{\lambda}
\newcommand{\s}{\sigma}
\newcommand{\wt}{\widetilde}
\renewcommand{\l}{\ell}
\newcommand{\les}{\lesssim}
\newcommand{\ges}{\gtrsim}
\newcommand{\pa}{\partial}
\newcommand{\M}{\mathcal{M}}
\newcommand{\N}{\mathbb{N}}
\tikzset{
	dot/.style={circle,fill=black,draw=black,inner sep=0pt,minimum size=0.5mm},
	>=stealth,
	}
\tikzset{
	ddot/.style={circle,fill=white,draw=black,inner sep=0pt,minimum size=0.8mm},
	>=stealth,
	}
\tikzset{decision/.style={ 
        draw,
        diamond,
        aspect=1.5
    }}
\tikzset{dia2/.style
={diamond,fill=white,draw=black,inner sep=0pt,minimum size=1mm},
	>=stealth,
	}
\tikzset{dia/.style
={star,fill=black,draw=black,inner sep=0pt,minimum size=1mm},
	>=stealth,
	}
\def\DeclareSymbol#1#2#3{\expandafter\gdef\csname MH@symb@#1\endcsname{\tikz[baseline=#2,scale=0.15]{#3}}}
\def\<#1>{\csname MH@symb@#1\endcsname}
\numberwithin{equation}{section}
\numberwithin{theorem}{section}
\begin{document}
\baselineskip = 14pt

\title[HNLS on torus]{Well-posedness for the periodic Hyperbolic nonlinear Schr\"odinger equations}

\author[E.~Ba\c{s}ako\u{g}lu and Y.~Wang]
{Engin Ba\c{s}ako\u{g}lu and Yuzhao Wang}

\address{Engin Ba\c{s}ako\u{g}lu\\Institute of Mathematical Sciences\\ ShanghaiTech University\\ Shanghai\\ 201210\\ China}
\email{ebasakoglu@shanghaitech.edu.cn}

\address{Yuzhao Wang\\School of Mathematics\\ University of Birmingham\\ Watson Building\\ Edgbaston\\ Birmingham \\B15 2TT\\ United Kingdom}
\email{y.wang.14@bham.ac.uk}

\subjclass[2020]{35A01, 35Q55}

\keywords{Hyperbolic nonlinear Schrödinger equations, critical Sobolev spaces, local well-posedness}

\begin{abstract}
We establish local well-posedness for the hyperbolic nonlinear Schrödinger equation (HNLS) in the critical spaces.  
Following the approach of Killip–Vișan \cite{KV14}, we derive scale-invariant Strichartz estimates for HNLS on both rational and irrational tori, thereby removing the $\varepsilon$-loss of derivative present in the hyperbolic Strichartz estimates of Bourgain-Demeter \cite{BD17}.

\end{abstract}



%
\maketitle

\vspace{-3mm}

\tableofcontents

\section{Introduction}
We consider the (hyperbolic) nonlinear Schr\"odinger equations,
\begin{equation}
\label{eq:NLS}
\begin{cases}
i\pa_t u + \Delta_{\pm} u \pm|u|^{2m}u=0,\\
u(t,x)|_{t=0}=u_0(x),
\end{cases}
\quad (t,x)\in[0,T)\times\M,
\end{equation}
where $\M = \R^d$ and $\T^d$,
 $x=(x_1,...,x_{d}) \in \M$, and
\begin{align}\label{Dpm}
\Delta_{\pm} = \sum_{j=1}^{j_0} \eps_j  \partial_{x_j}^2 - \sum_{j={j_0}+1}^d \eps_j \partial_{x_j}^2
\end{align}
for some ${j_0} \in \{0,1,...,d\}$, and $\eps_j\in \mathbb{R}_+$ for ${j} \in\{1,...,d\}$. 
When ${j_0} = 0$, we write $\Delta_\pm = -\Delta$; when ${j_0} = d$, we write $\Delta_\pm = \Delta$, where $\Delta$ is the standard (elliptic) Laplacian operator given by
\[
    \Delta = \sum_{j=1}^d \eps_j \partial_{x_j}^2,
\]
with $\eps_j \in \R_+$.
When \( {j_0} \notin \{0, d\} \) in \eqref{Dpm}, 
we refer to \(\Delta_\pm\) as the hyperbolic Laplacian operator, 
and the corresponding Schrödinger equation \eqref{eq:NLS} as the hyperbolic nonlinear Schrödinger equation (HNLS),
which will be the main concern of this paper.
As a model, HNLS appears in the dynamics of deep-water gravity waves, \cite{AS79}; it also appears in the study of self-focusing wave packets in plasma or nonlinear optics; see, for instance, \cite{B98, BK96, BR96}. Over the last decades, hyperbolic nonlinear Schr\"odinger equations have been studied extensively in different settings; we refer to \cite{GT12,G13,YW13_2,MT15, Totz3, BD17, DMPS18, T20, BSTW25, BOW25, LZ}. 

\subsection{Strichartz estimates}
In this paper, we focus on the local well-posedness theory of \eqref{eq:NLS}.
For this purpose, we view the nonlinear problem \eqref{eq:NLS} as the nonlinear perturbation of the following linear equation 
\begin{align}\label{linearNLS}
\begin{cases}
i\pa_t u + \Delta_{\pm} u=0,\\  u(t)|_{t=0}=u_0,  
\end{cases}
\end{align}
whose solution is of the form
\begin{align*}
u(t,x)=e^{it\Delta_{\pm}}u_0 (x),
\end{align*}
where $e^{it\Delta_{\pm}}$ is the linear propagator that will be made explicit later.  
Therefore, in the study of \eqref{eq:NLS}, it is crucial to estimate solutions to \eqref{linearNLS} for given initial data $u_0$.
From the Plancherel identity, we see that $e^{it\Delta_{\pm}}$ is unitary in the Sobolev space, 
\begin{align*}
\| e^{it \Delta_{\pm}} f\|_{L_x^2 (\M)} = \|f \|_{L_x^2 (\M)}.
\end{align*}
Strichartz estimates of the following form are of particular importance:
\begin{align}\label{Stri}
\| e^{it \Delta_{\pm}} f \|_{L^q_t L_x^r (\R \times \M)} \les \| f\|_{H_x^s (\M)},
\end{align}
where the space-time norm is defined by
\[
\|u \|_{L^q_t L_x^r (\R \times \M)} = \big\| \| u (t,x)\|_{L_x^r (\M)}\big\|_{L_t^q (\R)},
\]
and we write $L^q_t L_x^r$ as $L^r_{t,x}$ when $q=r$.
It is known that estimates \eqref{Stri} crucially depend on the hyperbolic operator $\Delta_\pm$ as well as the underlying manifold $\M$.

When \(\mathcal{M} = \mathbb{R}^d\), it is shown in \cite{KT98} that
\begin{align}\label{Stri_R}
    \| e^{it \Delta_{\pm}} f \|_{L^q_t L^r_x (\mathbb{R} \times \mathbb{R}^d)} \lesssim \| f \|_{L^2_x (\mathbb{R}^d)},
\end{align}
if and only if \((q, r)\) satisfies
\[
    \frac{2}{q} + \frac{d}{r} = \frac{d}{2}, \quad 2 \leq q, r \leq \infty,
\]
and \((q, r, d) \neq (2, \infty, 2)\).
Note that the constant in \eqref{Stri_R} may depend on \(\varepsilon_j \in \mathbb{R}_+\) for \(j \in \{1, 2, \dots, d\}\), but it is uniform in the sign of \(\Delta_\pm\), i.e., independent of the choice of \({j_0} \in \{0, 1, 2, \dots, d\}\) in \eqref{Dpm}. In particular, the linear elliptic and hyperbolic Schrödinger operators share the same Strichartz estimate \eqref{Stri_R}.
This uniformity follows from the dispersive estimate
\begin{align}\label{disp1}
    \| e^{it \Delta_{\pm}} f \|_{L^\infty_x (\mathbb{R}^d)} \lesssim |t|^{-\frac{d}{2}} \| f \|_{L^1_x (\mathbb{R}^d)},
\end{align}
which is independent of \({j_0}\) in \eqref{Dpm}.

We turn to the case $\M = \T^d$.
The solution to \eqref{linearNLS}
have the form
\begin{align}
\label{linearSol}
e^{it\Delta_{\pm}}u_0 (x)
 = \sum_{k\in\mathbb{Z}^d} e^{2\pi i ( x\cdot k - t |k|_{\pm}^2 )}\widehat{u}_0 (k)
\end{align}
where $|k|_\pm$ is the symbol of the (hyperbolic) Laplacian operator $\Delta_\pm$ in \eqref{Dpm} given by
\begin{align}\label{kpm}
|k|_{\pm}^2 = \sum_{j=1}^{j_0} \eps_j  {k_j}^2 - \sum_{j={j_0}+1}^d \eps_j {k_j}^2,
\end{align}
for $\eps_j \in \R_+$.
Compatible with the notation for $\Delta_{\pm}$, we set 
\begin{align}\label{d(j_0)}
  \delta (j_0)=\min\{j_0, d-j_0\}.  
\end{align}
It is known that there is no decay estimate like \eqref{disp1} for \eqref{linearSol}; see Lemma \ref{LEM:disK}.
Strichartz estimates for periodic Schr\"odinger operator \eqref{linearSol} are much more involved than that of the Eucleadean ones in \eqref{Stri_R} and are sensitive the the choice of ${j_0}$ in \eqref{kpm},
which are summarised in the following.

\begin{theorem}
[Theorem 2.4 in \cite{BD15} and Corollary 1.3 in \cite{BD17}]
\label{THM:Stri_T}
Fix $d\geq 1$, $\eps_1,...,\eps_d\in (0,1]$, 
$1\leq N\in 2^{\mathbb{Z}}$, 
and $p\geq \frac{2(d+2-\delta (j_0))}{d-\delta (j_0)}$ for $j_0\in\{0, 1,...,d\}$. 
Then, 
for each $\eps>0$, 
we have 
\begin{align}\label{BDstrcest1}
\Vert e^{it\Delta_{\pm}}P_{\leq N}\phi\Vert_{L^p_{t,x}([0,1]\times\mathbb{T}^d)}\lesssim_{\eps} N^\eps K_{p,d,j_0} (N)
\Vert P_{\leq N}\phi\Vert_{L^2(\T^d)},
\end{align}
where $e^{it\Delta_{\pm}} u_0 (x)$ is given in \eqref{linearSol} and \eqref{kpm} with $j_0 \in \{0,1,\cdots,d\}$, and 
\begin{align*}
K_{p,d,j_0} (N) = 
\begin{cases}
N^{\frac{d}2 - \frac{d+2}p}, & p \ge \frac{2(d+2-\delta (j_0))}{d-\delta (j_0)},\\
N^{(\frac12 - \frac1p) \delta (j_0)}, & 2 \le p < \frac{2(d+2-\delta (j_0))}{d-\delta (j_0)},
\end{cases}
\end{align*}
where $\delta (j_0)$ is as in \eqref{d(j_0)}.
\end{theorem}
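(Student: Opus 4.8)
The plan is to reduce the whole family of estimates to a single endpoint exponent, establish that endpoint by decoupling, and recover the rest by interpolation.

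Write $e^{it\Delta_\pm}P_{\le N}\phi(x)=\sum_{|k|\le N}a_k e^{2\pi i(k\cdot x-|k|_\pm^2t)}$ with $a_k=\widehat{P_{\le N}\phi}(k)$, so that \eqref{BDstrcest1} amounts to bounding the norm of the linear map $\{a_k\}\mapsto e^{it\Delta_\pm}P_{\le N}\phi$ from $\ell^2(\Z^d\cap B_N)$ into $L^p([0,1]\times\T^d)$. Two endpoints are free: at $p=2$, Plancherel on $\T^d\times[0,1]$ (the space-time frequencies $(k,-|k|_\pm^2)\in\Z^{d+1}$ being distinct) gives norm $1$, matching $K_{2,d,j_0}(N)=1$; at $p=\infty$, the triangle inequality and Cauchy--Schwarz give the bound $(\#\{|k|\le N\})^{1/2}\lesssim N^{d/2}$. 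Set $\delta:=\delta(j_0)$ and $p_\star:=\tfrac{2(d+2-\delta)}{d-\delta}$. An elementary computation gives $\tfrac d2-\tfrac{d+2}{p_\star}=\delta\big(\tfrac12-\tfrac1{p_\star}\big)=\tfrac{\delta}{d+2-\delta}$ and, more to the point, shows that the exponent occurring in $K_{p,d,j_0}(N)$ is exactly the continuous piecewise-linear function of $1/p$ passing through the nodes $0$, $\tfrac{\delta}{d+2-\delta}$, $\tfrac d2$ at $\tfrac1p=\tfrac12,\tfrac1{p_\star},0$. Since $\ell^2$ is fixed while $L^p$ interpolates, Riesz--Thorin upgrades the three nodal bounds to \eqref{BDstrcest1} for every admissible $p$. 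Hence everything reduces to the single endpoint estimate
\begin{align}\label{endpt}
\Big\|\sum_{|k|\le N}a_k e^{2\pi i(k\cdot x-|k|_\pm^2t)}\Big\|_{L^{p_\star}([0,1]\times\T^d)}\lesssim_\eps N^{\frac{\delta}{d+2-\delta}+\eps}\Big(\sum_{|k|\le N}|a_k|^2\Big)^{1/2}.
\end{align}

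One transfers \eqref{endpt} to a continuous statement by the parabolic rescaling $x\mapsto x/N$, $t\mapsto t/N^2$: on a box of side $\sim N^2$ the exponential sum becomes a periodization of a function whose Fourier transform is supported on the $N^{-1}$-separated set $\{(\eta,|\eta|_\pm^2):\eta\in N^{-1}\Z^d\cap B(0,C)\}$ lying on the truncated hyperbolic paraboloid $\Sigma_\pm$. Since the Hessian of $|\eta|_\pm^2$ is nondegenerate, $\Sigma_\pm$ has nonvanishing Gaussian curvature and the $\ell^2$-decoupling theorem of Bourgain--Demeter applies to it --- in the same range $2\le p\le\tfrac{2(d+2)}{d}$ as for the elliptic paraboloid, decoupling being insensitive to the signature of the second fundamental form. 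Decoupling the rescaled sum into caps of size $N^{-1/2}$ and then counting lattice points inside each cap reduces matters to an arithmetic estimate --- essentially, the number of $k,k'\in\Z^d\cap B_N$ lying in a common cap with $|k|_\pm^2=|k'|_\pm^2$. This is where the hyperbolic geometry enters: $\Sigma_\pm$ is ruled by $\delta$-dimensional isotropic affine subspaces on which $|\eta|_\pm^2$ is affine-linear, and such a subspace meeting $B_N$ carries $\sim N^{\delta}$ collinear lattice points, so a single $N^{-1/2}$-cap can contain $\sim N^{\delta/2}$ resonant points; this is precisely the defect responsible for the loss $N^{\delta(1/2-1/p)}$ in the small-$p$ regime, and for the factor $N^{\delta/(d+2-\delta)}$ in \eqref{endpt}.

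Because $p_\star\ge\tfrac{2(d+2)}{d}$, decoupling into small caps does not by itself reach $p_\star$; one handles the window $\tfrac{2(d+2)}{d}<p\le p_\star$ by a lower-dimensional reduction. After a linear change of the frequency variables (depending only on $j_0$ and $\eps_1,\dots,\eps_d$) one may assume $|\eta|_\pm^2=\sum_{i=1}^\delta\eta_i\eta_{\delta+i}-\sum_{i>2\delta}\eta_i^2$, which displays $\delta$ flat directions --- along which one only loses the elementary one-dimensional amount $N^{1/2-1/p}$ per variable, obtained by interpolating orthogonality against the triangle inequality --- and $d-\delta$ genuinely curved directions, along which the $(d-\delta+1)$-dimensional \emph{elliptic} Bourgain--Demeter estimate is available with critical exponent exactly $p_\star$. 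Combining these two inputs and undoing the rescaling gives \eqref{endpt}; interpolating \eqref{endpt} against the trivial $L^\infty$ bound $N^{d/2}$ then produces $K_{p,d,j_0}(N)=N^{d/2-(d+2)/p}$ for $p\ge p_\star$, which finishes the proof.

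The main obstacle is this last reduction: one must interleave the $\delta$ flat one-dimensional estimates, the $(d-\delta+1)$-dimensional curved decoupling, the lattice-point count, and the rescaled periodicity in $t$, and in particular verify that the number of lattice points on the curved $(d-\delta)$-dimensional piece is $N^{o(1)}$ at the exponent $p_\star$ rather than contributing an extra power of $N$. (When $\delta(j_0)=0$ there are no flat directions, the reduction is vacuous, and one is left with the original Bourgain--Demeter $\ell^2$-decoupling plus the classical lattice-point bound for the paraboloid.)
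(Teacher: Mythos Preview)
The paper does not prove this theorem: it is quoted as Theorem~2.4 of \cite{BD15} (for $\delta(j_0)=0$) and Corollary~1.3 of \cite{BD17} (for $\delta(j_0)\neq0$), so there is no in-paper argument to compare against beyond those citations.

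Your reduction by interpolation to the single endpoint $p_\star$ is correct and economical, and the numerology you check is right: $\tfrac d2-\tfrac{d+2}{p_\star}=\delta\big(\tfrac12-\tfrac1{p_\star}\big)=\tfrac{\delta}{d+2-\delta}$, and the exponent of $K_{p,d,j_0}(N)$ is exactly the broken-linear function of $1/p$ through the three nodes you list. The observation that the critical exponent of a $(d-\delta)$-dimensional \emph{elliptic} paraboloid is precisely $p_\star$ is also the right structural point.

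The gap is in your ``lower-dimensional reduction.'' After your change of variables the form becomes $\sum_{i\le\delta}\eta_i\eta_{\delta+i}-\sum_{i>2\delta}\eta_i^2$. If you freeze $\eta_1,\dots,\eta_\delta$, what remains in the $d-\delta$ variables $\eta_{\delta+1},\dots,\eta_d$ is \emph{linear} in $\eta_{\delta+1},\dots,\eta_{2\delta}$ and definite only in the last $d-2\delta$ variables; the Hessian has rank $d-2\delta$, not $d-\delta$, so the $(d-\delta+1)$-dimensional elliptic Bourgain--Demeter estimate is not available on that slice. (There is no quadratic term in $\eta_{\delta+1},\dots,\eta_{2\delta}$ into which the linear part could be absorbed.) A second, related issue is that your linear change of variables is in general not an automorphism of the frequency lattice when the $\varepsilon_j$ are arbitrary, so the discrete counting structure is not preserved.

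Both problems disappear if you simply skip the change of variables. In the original diagonal coordinates, freeze the $\delta=\min(j_0,d-j_0)$ variables carrying the minority sign; the phase in the remaining $d-\delta$ variables is then a genuine \emph{definite} quadratic form, and the $(d-\delta)$-dimensional elliptic Strichartz estimate (with critical exponent exactly $p_\star$) applies slice by slice. Taking $L^{p_\star}$ first in the frozen spatial variables costs $N^{\delta(1/2-1/p_\star)}$ by the elementary $\ell^2\to L^p$ bound you describe, then Minkowski in the frozen frequencies and elliptic Strichartz in the remaining variables give exactly $N^{\delta/(d+2-\delta)+\varepsilon}$, which is \eqref{endpt}. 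So your strategy is sound; it is the off-diagonal normal form that derails it.
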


\begin{remark}\rm
When $j_0 \in \{0,d\}$, i.e., $\delta (j_0) = 0$,
Theorem \ref{THM:Stri_T} was proved in \cite[Theorem 2.4]{BD15} as a consequence of the $\l^2$ decoupling theorem for parabola.
When $j_0 \in \{1,\cdots,d-1\}$, Theorem \ref{THM:Stri_T} was much more involved.
When $d = 2$ and $j_0 = 1$, \eqref{BDstrcest1} was first proved by the second author \cite{YW13_2} via a lattice counting argument,
which was then extended to the general case in \cite[Corollary 1.3]{BD17} as a consequence of the $\l^2$ decoupling for hypersurfaces with nonzero Gaussian curvature.
\end{remark}

We remark that the estimates \eqref{BDstrcest1} are insufficient for proving the well-posedness of the nonlinear problem \eqref{eq:NLS} due to the derivative loss \( N^\varepsilon \).  
For the elliptic case (i.e., \( j_0 \in \{0, d\} \)), Bourgain \cite{B93} removed this derivative loss on the rational torus for \( p > 4 \). This result was later generalized to the general torus by \cite{OhWang} and further extended to the wider range \( p > \frac{2(d+2)}{d} \) by \cite{KV14}. 

\begin{theorem}
[Theorem 2 in \cite{KV14}]
\label{THM:Stri_scale}
Fix $d\geq 1$, $\eps_1,...,\eps_d\in (0,1]$, 
$1\leq N\in 2^{\mathbb{Z}}$, 
and $p > \frac{2(d+2)}{d}$. 
Then, 
we have 
\begin{align*}
\Vert e^{\pm it\Delta} P_{\leq N}\phi\Vert_{L^p_{t,x}([0,1]\times\mathbb{T}^d)}\lesssim  N^{\frac{d}{2}-\frac{d+2}{p}}
\Vert P_{\leq N}\phi\Vert_{L^2(\mathbb{T}^d)},
\end{align*}
where the implicit constant is independent of $N$, and $e^{\pm it\Delta} u_0 (x)$ is given in \eqref{linearSol} and \eqref{kpm} with $j_0 \in \{0,d\}$. 
\end{theorem}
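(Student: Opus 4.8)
The plan is to deduce Theorem~\ref{THM:Stri_scale} from the scale-invariant Euclidean Strichartz inequality \eqref{Stri_R}, together with the $\eps$-loss periodic estimate of Theorem~\ref{THM:Stri_T}, following the scheme of Killip--Vi\c{s}an. Set $p_c=\tfrac{2(d+2)}{d}$ and $s_p=\tfrac d2-\tfrac{d+2}p$, so the goal is $\|e^{\pm it\Delta}P_{\le N}\phi\|_{L^p_{t,x}([0,1]\times\T^d)}\les N^{s_p}\|P_{\le N}\phi\|_{L^2}$ for $p>p_c$. Since $\Delta$ is elliptic, after a diagonal change of variables it may be taken to be the standard (possibly irrational) Laplacian; the sign $\pm$ in front of $it\Delta$ is irrelevant by complex conjugation, and the sign of the Laplacian is irrelevant because \eqref{Stri_R} is insensitive to it, via the sign-independent dispersive bound \eqref{disp1}. \textbf{Step 1 (parabolic rescaling).} Setting $v(s,y)=(e^{it\Delta}P_{\le N}\phi)(N^{-2}s,N^{-1}y)$, the function $v$ solves the linear periodic Schr\"odinger equation on the dilated torus $\T^d_N$ of side $\sim N$, with Fourier support in the unit ball; checking that the space-time $L^p$ norm and the $L^2$ norm transform consistently under this dilation, the assertion of Theorem~\ref{THM:Stri_scale} becomes equivalent to the $N$-uniform estimate
\begin{equation}\label{rescaled-Strichartz}
\|v\|_{L^p_{s,y}([0,N^2]\times\T^d_N)}\les\|v(0)\|_{L^2(\T^d_N)},
\end{equation}
i.e.\ to a global-in-time, loss-free Strichartz inequality for unit-frequency data on a large torus.

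\textbf{Step 2 (the short-time estimate).} On time intervals of length $\les N$ in $s$ (equivalently $\les N^{-1}$ in the original variable $t$), the periodic flow agrees with the Euclidean one up to an $O(N^{-\infty})$ error: extending $v(0)$ to a function $\Psi$ on $\R^d$ supported in one fundamental domain by a smooth periodic cutoff, Poisson summation writes $e^{is\Delta}v(0)$ as a periodization of $e^{is\Delta_{\R^d}}\Psi$, and only the central term survives because unit-frequency wave packets travel at speed $O(1)$ and so remain within one fundamental domain for times $\les N$. Feeding this into the scale-invariant Euclidean estimate \eqref{Stri_R} --- valid for all $p\ge p_c$, with $\|\Psi\|_{\dot H^{s_p}}\sim\|\Psi\|_{L^2}$ since $\Psi$ has unit frequency --- and returning to the original variables produces the loss-free short-time bound
\begin{equation}\label{short-time-bound}
\|e^{\pm it\Delta}P_{\le N}\phi\|_{L^p_{t,x}(I\times\T^d)}\les N^{s_p}\|P_{\le N}\phi\|_{L^2(\T^d)},\qquad |I|\le N^{-1},
\end{equation}
where unitarity of $e^{\pm it\Delta}$ on $L^2$ reduces any such $I$ to $[0,N^{-1}]$.

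\textbf{Step 3 (from short time to unit time --- the crux).} Summing \eqref{short-time-bound} over the $\sim N$ subintervals tiling $[0,1]$ loses a factor $N^{1/p}$; this already improves on Theorem~\ref{THM:Stri_T} but still falls short of scale-invariance, and removing it is the heart of the argument and the place where the strict inequality $p>p_c$ is used. Following Killip--Vi\c{s}an, I would argue by induction on the frequency scale: decompose $\{|k|\le N\}$ into sub-cubes of side $N/M$, Galilean-boost each piece by its integer centre --- which, by the explicit expression \eqref{linearSol}, only translates the solution in $x$ and hence leaves its $L^p([0,1]\times\T^d)$ norm unchanged --- rescale each piece to a frequency-$\le N/M$ problem on a dilated torus, and recombine the pieces by the $\ell^2$-decoupling of Bourgain--Demeter that underlies Theorem~\ref{THM:Stri_T}. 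This yields a recursion for the best constant $A(N)$ of the shape $A(N)\les D_M\,A(N/M)$, with $D_M$ a decoupling constant at a single fixed scale; iterating down to bounded frequency, where \eqref{short-time-bound} (equivalently the trivial unit-frequency bound on a unit space-time box) is loss-free, and using the slack $p>p_c$ to ensure that the accumulated constants telescope rather than compound into a power of $N$ (or of $\log N$), one arrives at $A(N)\les1$, i.e.\ \eqref{rescaled-Strichartz}. I expect this last step to be the main obstacle --- the frequency-scale induction must be set up so that it genuinely closes, which is exactly the point at which $p>\tfrac{2(d+2)}{d}$ is indispensable and at which the geometry of the decoupling on an irrational torus has to be handled with care --- while Steps~1 and~2 are routine once \eqref{Stri_R} is granted.
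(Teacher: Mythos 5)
Your Steps~1 and~2 (parabolic rescaling to the dilated torus $\T^d_N$ with unit-frequency data, and the loss-free short-time Strichartz bound on intervals of length $\lesssim N^{-1}$ via approximation by the Euclidean flow) are correct, but they are not part of the Killip--Vi\c{s}an argument, and they ultimately do not help because they reduce the theorem to exactly the same kind of assertion one started with. The real issue is Step~3, and there it is not merely that you have ``left the main obstacle open'': the mechanism you propose cannot close.

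The recursion $A(N)\lesssim D_M\,A(N/M)$ you write down is precisely the Bourgain--Guth/Bourgain--Demeter multi-scale scheme that \emph{produces} the $N^{\eps}$ loss in Theorem~\ref{THM:Stri_T}. Splitting $\{|k|\le N\}$ into sub-cubes of side $N/M$, Galilean-boosting each one, and then recombining by $\ell^2$-decoupling loses at least a constant $D_M>1$ at each scale (in fact $D_M\gtrsim M^{\eta}$ for some $\eta>0$ in general); iterating $\log_M N$ times necessarily accumulates a positive power of $N$, or, after a careful bookkeeping, an $N^{\eps}$. The strict inequality $p>\tfrac{2(d+2)}{d}$ does not, by itself, make these per-scale decoupling constants summable or telescoping --- that slack has no leverage in a purely multi-scale orthogonality argument. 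Put differently, your Step~3 attempts to re-derive the $\eps$-loss Strichartz estimate rather than remove the $\eps$.

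What actually removes the $\eps$ --- and this is the content of \cite[Theorem~2]{KV14}, and what the paper carries out in Section~\ref{epsilonremoval} for the hyperbolic generalisation (Theorem~\ref{THM:scale2}) --- is a \emph{single-pass} level-set argument, not a frequency-scale induction. One normalises $\|f\|_{L^2}=1$, bounds $\|e^{it\Delta}P_{\le N}f\|_{L^\infty}\lesssim N^{d/2}$ by Bernstein, and writes $\|e^{it\Delta}P_{\le N}f\|_{L^p}^p$ as a layer-cake integral over levels $\mu$. For $\mu\le N^{d/2-\delta}$, Chebyshev together with the $\eps$-loss bound of Theorem~\ref{THM:Stri_T} at the endpoint $p^*=\tfrac{2(d+2)}{d}$ gives the right contribution, the strict inequality $p>p^*$ providing the room to absorb the $N^{\eps}$ against $N^{-\delta(p-p^*)}$. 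For $\mu>N^{d/2-\delta}$, Cauchy--Schwarz reduces $\mu^2|\Omega_\omega|^2$ to $\langle\mathds{1}_{\Omega_\omega},K_N*\mathds{1}_{\Omega_\omega}\rangle$, where $K_N$ is the periodic free kernel; one then splits $K_N$ into a ``major-arc'' part $\widetilde K_N$, controlled by the Weyl-type dispersive bound of Lemma~\ref{LEM:disK} and the convolution estimate of Proposition~\ref{mainprop}, and a ``minor-arc'' remainder $K_N-\widetilde K_N$, which is small in $L^\infty$ by \eqref{diffT}. Decoupling enters only once (in the low-level regime), and the gain at high levels comes from number-theoretic kernel estimates, not from any multi-scale recursion. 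If you want to pursue your own route, that kernel/major-arc mechanism is the ingredient you would need to replace, not refine, your Step~3.
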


For the hyperbolic case (i.e., \( j_0 \in \{1, \dots, d-1\} \)), the second author \cite{YW13_2} eliminated the derivative loss when \( d = 2 \), \( j_1 = 1 \), and \( 2 \leq p \le 4 \) on the rational torus.  
In the general hyperbolic case, Bourgain and Demeter \cite{BD17} conjectured that the loss \( N^\varepsilon \) in \eqref{BDstrcest1} could be removed for \( p > \frac{2(d+2 - \delta (j_0))}{d - \mathscr \delta (j_0)} \). 
We confirm this in the following.

\begin{theorem}
\label{THM:scale2}
Fix $d\geq 2$, $\eps_1,...,\eps_d\in (0,1]$, 
$1\leq N\in 2^{\mathbb{Z}}$, 
and $p > \frac{2(d+2-\delta (j_0))}{d-\delta (j_0)}$ for $j_0\in\{0, 1,...,d\}$. 
Then,   
we have 
\begin{align}\label{BDstrcest2}
\Vert e^{it\Delta_{\pm}}P_{\leq N}\phi\Vert_{L^p_{t,x}([0,1]\times\mathbb{T}^d)}\lesssim 
N^{\frac{d}{2}-\frac{d+2}{p}}
\Vert P_{\leq N}\phi\Vert_{L^2(\T^d)},
\end{align}
where $e^{it\Delta_{\pm}} u_0 (x)$ is given in \eqref{linearSol} and \eqref{kpm} with $j_0 \in \{0,1,\cdots,d\}$.
\end{theorem}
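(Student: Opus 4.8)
\medskip
\noindent\textbf{Proof proposal.}
The plan is to follow the scheme of Killip--Vi\c{s}an \cite{KV14}. First, I reduce \eqref{BDstrcest2} to a single Littlewood--Paley shell. Writing $p_c(j_0):=\tfrac{2(d+2-\delta(j_0))}{d-\delta(j_0)}$ and $s_p:=\tfrac d2-\tfrac{d+2}{p}$, one checks from $\delta(j_0)\ge0$ that $p_c(j_0)\ge\tfrac{2(d+2)}{d}$, hence $s_p>0$ for every $p>p_c(j_0)$ (in particular $p>2$). The spatial Littlewood--Paley square function estimate together with Minkowski's inequality in $L^{p/2}_{t,x}$ then reduce matters to the single-scale bound $\|e^{it\Delta_{\pm}}P_N\phi\|_{L^p_{t,x}([0,1]\times\T^d)}\les N^{s_p}\|P_N\phi\|_{L^2(\T^d)}$, since summing the dyadic pieces $M\le N$ costs only $\big(\sum_{M\le N}M^{2s_p}\|P_M\phi\|_{L^2}^2\big)^{1/2}\le N^{s_p}\|\phi\|_{L^2}$ by $s_p>0$ (the bottom frequencies $\les1$ being handled trivially by Bernstein).

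Second, I rescale and bring in the decoupling input. Parabolic rescaling $(t,x)\mapsto(N^{-2}t,N^{-1}x)$ turns the frequency-$N$ piece of $e^{it\Delta_{\pm}}\phi$ into the extension operator $E_{\pm}g(t,x)=\sum_{\kappa}\widehat{\phi}(N\kappa)\,e^{2\pi i(x\cdot\kappa-t|\kappa|_{\pm}^2)}$ over the unit piece of the quadric $\Sigma_{\pm}=\{(\xi,|\xi|_{\pm}^2):\xi\in\R^d\}$, with $\kappa$ running over the $\tfrac1N$-lattice in the unit annulus, and the single-scale bound becomes a scale-invariant estimate for $E_{\pm}g$ on a box of side-lengths $\sim N\times\cdots\times N\times N^2$. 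The Hessian of $|\cdot|_{\pm}^2$ is $\mathrm{diag}(2\eps_1,\dots,2\eps_{j_0},-2\eps_{j_0+1},\dots,-2\eps_d)$, which is nonsingular, so $\Sigma_{\pm}$ has everywhere nonvanishing Gaussian curvature; the $\ell^2$-decoupling theorem of Bourgain--Demeter \cite{BD17} for $\Sigma_{\pm}$ --- decoupling into plates that are curved in $d-\delta(j_0)$ directions and combining this with the trivial ($L^2$-based) Strichartz estimate along the $\delta(j_0)$ flat directions of $|\cdot|_{\pm}^2$ --- reproduces, after undoing the rescaling, Theorem \ref{THM:Stri_T}, and more precisely its localized form $\|e^{it\Delta_{\pm}}P_N\phi\|_{L^p_{t,x}([0,T]\times\T^d)}\lesssim_{\eps}(1+TN^2)^{\eps}\,N^{s_p}\,\|P_N\phi\|_{L^2}$ for $N^{-2}\le T\le1$ and $p\ge p_c(j_0)$, in which the loss is a small power of $TN^2$ rather than a fixed power of $N$.

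Third, I remove the $N^{\eps}$-loss by an $\eps$-removal argument of Bourgain--Tao type, in the periodic-adapted form of \cite{KV14}. Because $p$ lies \emph{strictly} above $p_c(j_0)$, the localized estimate above --- run on a well-chosen family of time scales (equivalently, the decoupling inequality iterated over nested spacetime balls), while tracking the wave-packet structure of $e^{it\Delta_{\pm}}P_N\phi$ --- can be summed into the clean scale-invariant bound on $[0,1]\times\T^d$: the surplus $p-p_c(j_0)>0$ produces, at each stage of the summation, a genuine negative power of $N$ that dominates the $\eps$-loss, so the relevant series converges geometrically once the scale parameter is optimized against $\eps$. This yields the single-scale bound of Step~1, hence \eqref{BDstrcest2}; this is the step where the derivative loss is eliminated and where the open condition $p>p_c(j_0)$ is used essentially.

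The main obstacle is carrying out Steps~2--3 in the genuinely hyperbolic regime $j_0\in\{1,\dots,d-1\}$, where $\Sigma_{\pm}$ is \emph{not} uniformly curved: it is flat along a $\delta(j_0)$-dimensional family of directions lying over the null planes of $|\cdot|_{\pm}^2$, so neither the elliptic decoupling nor the elliptic $\eps$-removal can be quoted verbatim. One must split off these $\delta(j_0)$ degenerate directions, use $L^2$-orthogonality (equivalently the flat $\delta(j_0)$-dimensional Strichartz estimate) there and the curved $(d-\delta(j_0))$-dimensional decoupling in the complementary directions, and only then recombine --- this is precisely what produces the two regimes of $K_{p,d,j_0}$ in Theorem \ref{THM:Stri_T} and the threshold $p_c(j_0)$. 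A second, unavoidable point is that every constant must be uniform in $\eps_1,\dots,\eps_d\in(0,1]$, so as to cover irrational as well as rational tori: this forbids any lattice-point counting, which is sensitive to the $\eps_j$, and forces one to accommodate the quasi-periodicity --- rather than exact periodicity --- in $t$ of $e^{it\Delta_{\pm}}P_N\phi$ when the values $|k|_{\pm}^2$ fail to be integers.
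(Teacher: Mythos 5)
Your high-level plan (reduce to a single frequency scale, invoke the $\ell^2$-decoupling estimate of Bourgain--Demeter, then remove the $N^\eps$ loss using the strict inequality $p>p_c(j_0)$ by a Killip--Vi\c{s}an-style argument) matches the paper's route in outline, but your description of the $\eps$-removal step does not correspond to how it is actually carried out, and it misses the one observation that makes the hyperbolic case go through without new work.

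The $\eps$-removal in \cite{KV14} (and in the paper) is \emph{not} an ``iterated decoupling over nested spacetime balls'' tracking ``wave-packet structure.'' It is a level-set argument. Normalizing $\|f\|_{L^2}=1$, one writes $\|e^{it\Delta_\pm}P_{\le N}f\|_{L^p}^p$ as $\int_0^{CN^{d/2}}p\mu^{p-1}|\{|e^{it\Delta_\pm}P_{\le N}f|>\mu\}|\,d\mu$. On the range $\mu\le N^{d/2-\delta}$ one quotes Theorem \ref{THM:Stri_T} with the $N^\eps$ loss and uses the slack $p>p_c(j_0)$ to absorb it. On the range $\mu>N^{d/2-\delta}$ one needs a genuine \emph{kernel} estimate: one writes $\mu^2|\Omega_\omega|^2\lesssim\langle\mathds{1}_{\Omega_\omega},K_N^\pm*\mathds{1}_{\Omega_\omega}\rangle$, splits $K_N^\pm=(K_N^\pm-\widetilde K_N^\pm)+\widetilde K_N^\pm$ according to whether $t$ is close to a rational with small denominator, and controls each piece using the Weyl-sum dispersive bound from \cite[Lemma 3.18]{B93}. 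This second regime, which your sketch glosses over as ``summed geometrically once the scale parameter is optimized,'' is exactly where the hyperbolic structure has to be confronted.

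The point you missed is that it in fact does \emph{not} have to be confronted: the hyperbolic kernel satisfies $|K_N^\pm(t,x)|=|K_N(t,x)|$ pointwise (Lemma \ref{LEM:disK}), because the phase $e^{-2\pi i t\eps_j k_j^2}$ can be conjugated away factor by factor once $\varphi$ is radial, so the product over $j$ has the same modulus as the elliptic one. Consequently the Weyl-sum dispersive estimate, the definition of the ``bad'' time set $\mathcal T$, and the convolution bound for $\widetilde K_N^\pm$ (Proposition \ref{mainprop}) transfer verbatim from the elliptic case of \cite{KV14}. Your closing paragraph, where you warn that the hyperbolic paraboloid ``is flat along a $\delta(j_0)$-dimensional family of directions'' so ``neither the elliptic decoupling nor the elliptic $\eps$-removal can be quoted verbatim,'' is correct for the decoupling input (that is precisely what produces the threshold $p_c(j_0)$ in Theorem \ref{THM:Stri_T}) but is wrong for the $\eps$-removal: once decoupling is granted as a black box, the rest of the Killip--Vi\c{s}an machinery applies with no hyperbolic modification. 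Without the kernel identity, the $\eps$-removal you gesture at is not justified; with it, your worry about degenerate directions evaporates. That identity is the missing idea.

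Two smaller remarks. First, the Littlewood--Paley reduction to a single shell in your Step~1 is fine but unnecessary: the paper proves the bound directly for $P_{\le N}\phi$, which is the form actually needed downstream (the single-shell version is then a special case). Second, your intermediate claim of a localized decoupling bound with loss $(1+TN^2)^\eps$ is plausible but is not used: the paper applies Theorem \ref{THM:Stri_T} on $[0,1]$ exactly as stated, at the endpoint exponent $p^*=p_c(j_0)$, and lets the level-set cutoff at $\mu=N^{d/2-\delta}$ do the bookkeeping.
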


The proof of Theorem \ref{THM:scale2} for $\delta (j_0) = 0$ has been proved in Theorem \ref{THM:Stri_scale}; while for $\delta (j_0) \neq 0$, Theorem \ref{THM:scale2} will be presented in Section~\ref{epsilonremoval}.

\medskip

\begin{remark}\label{RMK:c1}
\rm 
The derivative loss of order $N^\varepsilon$ in~\eqref{BDstrcest1} when 
\begin{equation}
p = \frac{2(d+2 - \delta (j_0))}{d - \delta (j_0)},
\end{equation}
is conjectured to be necessary for $d (j_0) = 0$, but not for $d (j_0) \neq 0$.
In the elliptic case where $\delta (j_0) = 0$, Bourgain~\cite{B93} first established this conjecture for $d = 1$, while Takaoka--Tzvetkov~\cite{TT01} confirmed it for $d = 2$. Recently, the sharp derivative loss of $(\log N)^{1/4}$ was obtained for $d = 2$ in~\cite{HK24}. For higher dimensions $d \geq 3$, the conjecture remains completely open.
For hyperbolic cases (general $\delta (j_0) \neq 0$), the problem is largely unresolved\footnote{See Remark \ref{RMK:final} for a recent updates.}. 
\end{remark}

\begin{remark} \label{RMK:c2}
\rm
A related conjecture suggests that the $N^\varepsilon$ loss in~\eqref{BDstrcest1} can be eliminated when 
\begin{equation}
2 < p < \frac{2(d + 2 - \delta (j_0))}{d - \delta (j_0)}.
\end{equation}
Partial results exist for this conjecture: For $d = 1$, the work~\cite{Zg} established the range $2 < p \leq 4$, while for $d = 2$ with $\delta (j_0) = 1$,~\cite{YW13_2} verified the same range. 
The problem remains open\footnote{See Remark \ref{RMK:final} for a recent updates on $d = 3$.} for dimensions $d \geq 4$ and general values of $\delta (j_0)$.
\end{remark}

\subsection{Well-posedness}
When $\M = \R^d$,
HNLS \eqref{eq:NLS} enjoys the scaling symmetry,
i.e., if $u$ solves \eqref{eq:NLS} with an initial data $u_0$,
then the rescaled function $u_{\ld} (t,x): = \ld^{\frac1m} u(\ld x, \ld^2 t)$ is also a solution to \eqref{eq:NLS} with the rescaled initial data $u_{0,\ld} (x): = \ld^{\frac1m} u(\ld x)$.
We say that the Sobolev index $s_c : = s_c (d,m)$ is critical if the homogeneous norm $\| \cdot\|_{\dot H^{s_c} (\R^d)}$ is invariant under the same scaling.
Particularly, the critical Sobolev index is
\begin{align}
\label{critical}
s_c = s_c (d,m) =\frac{d}2 - \frac1m.
\end{align} 
When $\M = \T^d$,
we do not have this scaling invariance.
However, the critical Sobolev index \eqref{critical} still provides important heuristics.

When $d (j_0) = 0$, i.e., the elliptic Schr\"odinger equation, the local well-posedness for \eqref{eq:NLS} in critical/subcritical spaces have been well-studied.
See \cite{B93,BD15,HTT11,YW13_2,KV14}.

\begin{theorem}
[elliptic case]
\label{THM:elliptic}
Let $\delta (j_0) = 0$, and let $s_{c}$ be as defined in \eqref{critical}. For integers $d \ge 2$ and $m \ge 1$, the Cauchy problem \eqref{eq:NLS} satisfies the following properties: It is locally well-posed in $H^{s}(\mathbb{T}^d)$ for all $s > s_c$. When $(d, m) \neq (2,1)$, the local well-posedness extends to the critical space $H^{s_c}(\mathbb{T}^d)$.\footnote{In the case $(d,m) = (3,1)$, the well-posedness of \eqref{eq:NLS} in the critical space $H^{\frac12}(\T^3)$ was left open in \cite{YW13}. 
The proof follows the same strategy as in other cases treated in \cite{YW13}, with the additional use of Strichartz estimates from \cite{BD15,KV14}. 
For completeness, we provide the proof in the appendix.
}
\end{theorem}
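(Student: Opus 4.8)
The plan is to prove Theorem~\ref{THM:elliptic} by a contraction-mapping argument for the Duhamel formulation of \eqref{eq:NLS},
\[
u(t) = e^{it\Delta_\pm}u_0 \mp i\int_0^t e^{i(t-t')\Delta_\pm}\big(|u|^{2m}u\big)(t')\,dt',
\]
in a function space adapted to the linear propagator, from which uniqueness and continuous dependence follow in the usual way. For the subcritical range $s>s_c$ I would work in the Bourgain spaces $X^{s,b}(\T^d)$ with $b$ slightly above $\tfrac12$; at the critical regularity $s=s_c$ this scale is too weak, and instead I would use the atomic spaces $U^p_{\Delta}$, $V^p_{\Delta}$ and the associated solution and nonlinearity spaces of Hadac--Herr--Koch, as in the energy-critical analysis of \cite{HTT11} and the mass-(super)critical analysis of \cite{KV14,YW13}. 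The structural facts I would invoke are standard: the nesting $U^p_\Delta\hookrightarrow V^p_\Delta\hookrightarrow U^q_\Delta$ for $p<q$, the $U^p$--$V^{p'}$ duality, the transference principle, and interpolation; in particular the scale-invariant Strichartz estimate of Theorem~\ref{THM:Stri_scale} transfers to $U^p_\Delta$ with $p=\tfrac{2(d+2)}{d}$.

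The only real work is the multilinear estimate
\[
\Big\| \int_0^t e^{i(t-t')\Delta_\pm}\big(|u|^{2m}u\big)(t')\,dt'\Big\|_{X^{s}([0,1]\times\T^d)}\ \lesssim\ \|u\|_{X^{s}([0,1]\times\T^d)}^{2m+1},
\]
which I would prove by duality, reducing to a bound for the $(2m+2)$-linear space-time integral $\int_{[0,1]\times\T^d}\prod_{j}u_{N_j}$ over Littlewood--Paley pieces $u_{N_j}$. After ordering the frequencies $N_1\ge N_2\ge\cdots$, the two highest-frequency factors go into the scale-invariant Strichartz norm $L^p_{t,x}$ with $p=\tfrac{2(d+2)}{d}$, and the remaining factors into complementary Lebesgue norms by H\"older; a bilinear/orthogonality refinement in the top two frequencies (or, more crudely, Bernstein on the lower factors) extracts a factor $(N_2/N_1)^{\theta}$ together with negative powers of $N_3,\dots,N_{2m+2}$. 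The lower-frequency sums are then geometric, and for $s=s_c$ the highest-frequency sum is exactly balanced. The hard part will be this last step, and it is exactly where Theorem~\ref{THM:Stri_scale} is indispensable: with the $N^\varepsilon$-lossy estimate of Theorem~\ref{THM:Stri_T} the top-frequency sum diverges (logarithmically at the endpoint exponent), and one recovers only $s>s_c$.

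Granting this estimate, the subcritical case $s>s_c$ is routine: the extra $s-s_c$ derivatives make the top-frequency sum converge with room to spare even using only the lossy Strichartz estimates, and restricting to a short interval $[0,T]$ gains a positive power $T^{\theta}$ from the $X^{s,b}$ localization, closing the contraction for all $d\ge 2$, $m\ge 1$. At the critical regularity $s=s_c$ there is no spare room, but when $(d,m)\ne(2,1)$ the estimate still closes, and since $|u|^{2m}u$ is polynomial a now-standard small-data argument — splitting $u_0$ into a smooth part and a high-frequency tail of small $H^{s_c}$-norm and iterating on a correspondingly short interval — upgrades it to local well-posedness in $H^{s_c}(\T^d)$. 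The excluded borderline $(d,m)=(2,1)$, i.e.\ the $L^2$-critical cubic NLS on $\T^2$, is genuinely out of reach by this method because the relevant $L^4_{t,x}$ Strichartz estimate on $\T^2$ carries a true loss (Remark~\ref{RMK:c1}) and no scale-invariant substitute exists. Of the critical cases that are in scope, those with $(d,m)\notin\{(2,1),(3,1)\}$ are already contained in \cite{HTT11,KV14,YW13}; the remaining case $(d,m)=(3,1)$ — the $H^{1/2}$-critical cubic NLS on $\T^3$, left open in \cite{YW13} — I would treat by the same scheme, feeding in the Strichartz estimates of \cite{BD15,KV14} wherever \cite{YW13} used lossy ones, with the details deferred to the appendix.
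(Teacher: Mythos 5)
Your proposal is essentially correct and follows the same route the paper takes: it defers the subcritical case to \cite{B93} (via $X^{s,b}$), defers the critical cases with $(d,m)\notin\{(2,1),(3,1)\}$ to \cite{HTT11,YW13,KV14}, excludes $(2,1)$ because of the endpoint failure of the $L^4$ Strichartz estimate on $\T^2$, and handles the remaining case $(d,m)=(3,1)$ in the appendix via $U^2/V^2$-based spaces, duality (Lemma~\ref{lemmaduality}), a Littlewood--Paley and frequency-cube decomposition, and the scale-invariant Strichartz estimates of \cite{BD15,KV14}. This is precisely the structure of the paper's Appendix~\ref{appendix}, and the general iteration/transference/embedding facts you cite are used there in exactly the form you describe.

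One detail in your sketch of the multilinear estimate is imprecise and, if taken literally, would not close. You place the two highest-frequency pieces into the ``scale-invariant Strichartz norm $L^p_{t,x}$ with $p=\tfrac{2(d+2)}{d}$,'' but the scale-invariant estimate of Theorem~\ref{THM:Stri_scale} requires the strict inequality $p>\tfrac{2(d+2)}{d}$; at the endpoint a derivative loss is conjecturally unavoidable (Remark~\ref{RMK:c1}). The actual bookkeeping (see the proof of Lemma~\ref{LEM:multi3d}, and similarly Lemma~\ref{lemma1}) puts the cube-decomposed high-frequency factors \emph{and} one medium-frequency factor into $L^{7/2}_{t,x}$ (strictly above the endpoint $10/3$ for $d=3$) and puts only the lowest-frequency factor into a much larger exponent $L^7_{t,x}$; the Hölder balance $\tfrac{3}{7/2}+\tfrac{1}{7}=1$ then yields exactly the gain $(N_3/N_2)^{2/7}$ that makes the dyadic sums converge with $Y^{1/2}$ norms on the inner factors. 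Assigning the two top factors the endpoint exponent and the two bottom factors a single complementary exponent produces no negative power of the low frequency and the sum over $N_3$ diverges at the critical regularity. So the near-endpoint choice, rather than the endpoint itself, is the point at which scale invariance is used.
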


The subcritical results in Theorem~\ref{THM:elliptic}, i.e., the well-posedness of \eqref{eq:NLS} in $H^s(\mathbb{T}^d)$ for $s > s_c$, were established in \cite{B93}. 
The critical case ($s = s_c$) of Theorem~\ref{THM:elliptic} is more delicate. Herr--Tataru--Tzvetkov \cite{HTT11} first proved the well-posedness of \eqref{eq:NLS} for $(d,m) = (3,2)$ in the critical space $H^1(\mathbb{T}^3)$, developing a novel multilinear argument based on temporal orthogonality (see Remark~\ref{RMK:failH} for details). 
The second author \cite{YW13_2} extended these critical results to other cases, establishing Theorem~\ref{THM:elliptic} except for $(d,m) = (3,1)$ and $(d,m) = (4,1)$. These remaining cases were resolved by Killip--Visan \cite{KV14} using scale-invariant Strichartz estimates (Theorem~\ref{THM:Stri_scale}) and a bilinear approach. 
We note that the bilinear method in \cite{KV14} differs from the temporal orthogonality argument in \cite{HTT11} and appears more suitable for the hyperbolic setting considered in this paper. 
See Remark~\ref{RMK:failH} for further discussions.

For the case $(d,m) = (2,1)$, the critical exponent is $s_c = s_c(2,1) = 0$, making $L^2(\mathbb{T}^2)$ the critical space. 
Kishimoto \cite{Kishimoto} proved that the solution map of \eqref{eq:NLS} fails to be $C^3$ in $L^2(\mathbb{T}^2)$. 
For super-critical regularities $s < s_c$, norm inflation phenomena for solutions to \eqref{eq:NLS} in $H^s$ were established in \cite{Kishi2}. 
We say norm inflation occurs in $H^s$ if for every $\varepsilon > 0$, there exist initial data $\phi \in H^\infty$ and a time $T > 0$ satisfying
\[
\|\phi\|_{H^s} < \varepsilon \quad \text{and} \quad 0 < T < \varepsilon,
\]
such that the corresponding smooth solution $u$ exists on $[0,T]$ and satisfies
\[
\|u(T)\|_{H^s} > \varepsilon^{-1}.
\]
This demonstrates that Theorem~\ref{THM:elliptic} is sharp regarding analytic well-posedness.
If we consider weaker notions of well-posedness requiring only continuity of the solution map, then $(d,m) = (2,1)$ remains the sole unresolved case. 
The question of $L^2(\mathbb{T}^2)$ well-posedness with merely continuous solution map remains a major open problem in the field.

Recently, \cite{KW24} investigated general non-algebraic nonlinearities, in contrast to the integer-power nonlinearities ($m \in \mathbb{Z}_+$) treated in Theorem~\ref{THM:elliptic}. However, our current work focuses exclusively on algebraic nonlinearities.

\medskip

One of the main purpose of this paper is to extend Theorem \ref{THM:elliptic} to general hyperbolic setting \eqref{eq:NLS}.
With the help of Theorem \ref{THM:Stri_T} and similar arguments as in \cite{B93}, we can establish the sub-critical results as follows.

\begin{theorem}[hyperbolic case: subcritical]
\label{THM:main0}
Let $s_{c}$ be as in \eqref{critical} and $d \ge 2, m \ge 1,  j_0 \ge 1$ be positive integers. 
Then, the Cauchy problem \eqref{eq:NLS} is locally well-posed in $H^{s} (\T^d)$ for $s > s_c$ provided: 
\begin{itemize}
\item $d=2$ and $m\geq2$.
\item $d\ge 3$ and $m\geq1$. 
\end{itemize} 
\end{theorem}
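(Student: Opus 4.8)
The plan is to solve the Duhamel formulation
\[
u(t)=e^{it\Delta_{\pm}}u_0\mp i\int_0^t e^{i(t-t')\Delta_{\pm}}\bigl(|u|^{2m}u\bigr)(t')\,dt'
\]
by a contraction-mapping argument in the Bourgain space $X^{s,b}([0,T]\times\T^d)$ built from the propagator $e^{it\Delta_{\pm}}$, in the spirit of Bourgain \cite{B93}. Fix $b>\frac12$ close to $\frac12$. The homogeneous estimate $\|e^{it\Delta_{\pm}}u_0\|_{X^{s,b}([0,T])}\lesssim\|u_0\|_{H^s(\T^d)}$ and the Duhamel estimate, which on a short time interval gains a factor $T^{\theta}$ for some $\theta>0$, are insensitive to the symbol $|k|_{\pm}^2$ from \eqref{kpm} --- they only use that $e^{it\Delta_{\pm}}$ is a strongly continuous unitary group on $H^s(\T^d)$ --- so they carry over verbatim from the elliptic setting. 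Consequently the whole problem reduces to the $(2m+1)$-linear estimate
\[
\Bigl\|\,\prod_{j=1}^{2m+1}u_j^{(\pm)}\,\Bigr\|_{X^{s,-b'}(\T^d)}\lesssim\prod_{j=1}^{2m+1}\|u_j\|_{X^{s,b}(\T^d)}
\]
for a suitable $b'\in(0,\frac12)$, where $u_j^{(\pm)}$ denotes $u_j$ or $\overline{u_j}$ (complex conjugation plays no role below); the Lipschitz bound for $|u|^{2m}u-|w|^{2m}w$ then follows by multilinearity, since the difference is a finite sum of terms of this type with one factor replaced by $u-w$.

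To prove the multilinear estimate I would dualise against $v\in X^{-s,b'}$, decompose all $2m+2$ functions into Littlewood--Paley pieces, and bound the resulting space-time integral by Hölder's inequality with every factor placed in $L^{p}_{t,x}([0,1]\times\T^d)$ for $p$ slightly larger than $2(m+1)$, so that $(2m+2)/p\le1$. To each dyadic piece one applies the Strichartz estimate of Theorem~\ref{THM:Stri_T}, transferred to Bourgain spaces:
\[
\|P_N w\|_{L^{p}_{t,x}([0,1]\times\T^d)}\lesssim_{\varepsilon}N^{\varepsilon}\,K_{p,d,j_0}(N)\,\|P_N w\|_{X^{0,b}}.
\]
The key point is that $K_{p,d,j_0}(N)$ equals the scale-invariant rate $N^{\frac{d}{2}-\frac{d+2}{p}}$ precisely in the range $p\ge\frac{2(d+2-\delta(j_0))}{d-\delta(j_0)}$, and the choice $p\approx2(m+1)$ lies in this range exactly when $m\bigl(d-\delta(j_0)\bigr)\ge2$. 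Since $\delta(j_0)=\min\{j_0,d-j_0\}\le\lfloor d/2\rfloor$ for $j_0\ge1$, the worst configuration $\delta(j_0)=\lfloor d/2\rfloor$ requires $m\lceil d/2\rceil\ge2$, which is equivalent to the two listed hypotheses: $d=2$ with $m\ge2$, or $d\ge3$ with $m\ge1$. This is exactly where the restriction on $(d,m)$ enters; for the excluded case $(d,m)=(2,1)$ one would need the exponent $p=2(m+1)=4$, but there $\frac{2(d+2-\delta(j_0))}{d-\delta(j_0)}=6>4$, so $K_{p,d,j_0}(N)=N^{(\frac12-\frac1p)\delta(j_0)}=N^{1/4}$ already exhibits genuine growth that the subcriticality cannot absorb.

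With the Strichartz input in place each factor $P_{N_j}w$ contributes $N_j^{\varepsilon}\,N_j^{\sigma}$ with $\sigma:=\frac{d}{2}-\frac{d+2}{2(m+1)}$, and one checks the strict inequality
\[
\sigma=\frac{d}{2}-\frac{d+2}{2(m+1)}<s_c=\frac{d}{2}-\frac1m\qquad\text{whenever }md>2,
\]
so that $\sigma<s_c<s$ throughout the admissible range. The dyadic sum is then organised around the largest frequency $N_1$: the support condition on the output frequency forces the two largest among the $2m+2$ frequencies to be comparable, the net exponent attached to each remaining factor is $\sigma-s<0$, and the gap $s-s_c>0$ supplies the room to absorb the $N^{\varepsilon}$ losses, sum the geometric series over the lower frequencies, and close with a Cauchy--Schwarz in $N_1$. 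The main obstacle is the multilinear estimate itself; within it, the one genuinely delicate technical point is that the dual function $v$ lives at time-regularity $b'<\frac12$, below the threshold needed to transfer a Strichartz estimate directly. This is handled in the standard way, by interpolating the Strichartz bound at level $b>\frac12$ with the trivial identity $\|P_N w\|_{L^2_{t,x}}=\|P_N w\|_{X^{0,0}}$, at the cost of a further small power of $N$ that is again absorbed by $s-s_c>0$. Assembling these estimates yields a contraction on $[0,T]$ with $T=T(\|u_0\|_{H^s})>0$, and the remaining features of local well-posedness --- uniqueness, continuous dependence on the data, and persistence of higher regularity --- follow by the usual arguments.
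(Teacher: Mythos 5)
Your admissibility bookkeeping is correct and the choice of $X^{s,b}$ spaces over the paper's $U^p/V^p$ framework is a viable alternative in the subcritical regime, but the multilinear estimate as you have set it up does not close. You propose to place every one of the $2m+2$ dyadic factors in the same Lebesgue exponent $p\approx 2(m+1)$ and apply the transferred Strichartz bound $\|P_{N}w\|_{L^p}\lesssim N^{\sigma+\varepsilon}\|P_{N}w\|_{X^{0,b}}$ to each. The Fourier support condition forces the two largest frequencies to be comparable, say $N_0\sim N_1$, where $N_0$ belongs to the dual factor $v$ and $N_1$ to $u_1$. After converting $X^{0,b'}$ to $X^{-s,b'}$ for $v$ and $X^{0,b}$ to $X^{s,b}$ for $u_1$, these two factors contribute the net power
\begin{equation*}
N_0^{\sigma+\varepsilon+s}\,N_1^{\sigma+\varepsilon-s}\sim N_1^{2\sigma+2\varepsilon},
\end{equation*}
which grows in $N_1$ because $\sigma=\tfrac{d}{2}-\tfrac{d+2}{2(m+1)}>0$ throughout the admissible range (it vanishes only when $md=2$, i.e.\ $(d,m)=(2,1)$, which is excluded). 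Once $N_2,\dots,N_{2m+1}$ are $O(1)$, the negative exponents $\sigma+\varepsilon-s$ on the low frequencies give no help, and one is left trying to sum $\sum_{N_0\sim N_1}N_1^{2\sigma+2\varepsilon}\|P_{N_0}v\|_{X^{-s,b'}}\|P_{N_1}u_1\|_{X^{s,b}}$, which diverges; no Cauchy--Schwarz in $N_1$, nor the subcritical gap $s-s_c>0$, can tame a prefactor that does not see the low frequencies.

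The missing ingredient is a localisation that lets you pay the Strichartz penalty at the \emph{low} frequency scale rather than at $N_0\sim N_1$. The paper's proof of Lemma~\ref{PROP:T2m} does this with the bilinear Strichartz estimate \eqref{bilinear}, which controls $\|P_{N_1}u_1P_{N_2}u_2\|_{L^2_{t,x}}$ by $N_2^{1/2+\varepsilon}$ (not $N_1^{1/2+\varepsilon}$); equivalently, one decomposes the high-frequency annuli into cubes of the low side-length and applies the Galilean-boosted Strichartz estimate on cubes (Lemma~\ref{LEM:Stri_Ga}, Corollary~\ref{COR:Stri_GY}) together with almost-orthogonality in the cube index. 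Either device replaces the unsummable $N_1^{2\sigma}$ by a summable power of the lower frequencies, and only then does the dyadic bookkeeping you describe close; without a bilinear or cube-localised Strichartz input, the term-by-term $L^p$ H\"older approach fails precisely in the high--high--low regime.
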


\begin{remark}\rm
For $d = 2$ and $m = 1$, the sharp local well-posedness of \eqref{eq:NLS} has been established in \cite{B93} and \cite{YW13_2}.  
In particular, when $\delta (j_0) = 0$, \eqref{eq:NLS} reduces to the standard nonlinear Schrödinger equation, which is:  
well-posed in $H^s(\mathbb{T}^2)$ for $s > 0$ (see \cite{B93}); and ill-posed in $H^s(\mathbb{T}^2)$ for $s < 0$ (see \cite{TT01} and \cite{Kishimoto}).  
When $j_0 = 1$, the Strichartz estimates deteriorate, leading to a weaker well-posedness theory. The second author \cite{YW13_2} proved that \eqref{eq:NLS} is well-posed in $H^s(\mathbb{T}^2)$ for $s > \frac{1}{2}$;  
and is (mild) ill-posed in $H^s(\mathbb{T}^2)$ for $s < \frac{1}{2}$.\footnote{Recently, \cite{LZ} extended the ill-posedness result to $H^{\tfrac12}(\T^2)$.}

For positive integers $m \ge 1$, Theorem \ref{THM:main0}, together with \cite{B93} and \cite{YW13_2}, provides a complete characterization of well-posedness and ill-posedness for \eqref{eq:NLS} up to the critical regularity.  
\end{remark}

\begin{remark}\rm
In a recent paper \cite{SW24}, Saut and Wang thoroughly reviewed hyperbolic nonlinear Schr\"odinger equations, and proposed a set of conjectures and open problems. In this regard, we point out that the result of Theorem \ref{THM:main0} and \ref{THM:main} in the case $d=3$ provides an answer in relation to the ``open problem $3$" proposed in \cite{SW24}. 
\end{remark}

Our next result addresses the well-posedness in the critical spaces $H^{s_c}$, where $s_c$ is the critical exponent given in \eqref{critical}.

\begin{theorem}[hyperbolic case: critical]
\label{THM:main}
Let $s_{c}$ be as in \eqref{critical} and $d \ge 2, m, j_0$ be positive integers with $\delta (j_0) \neq 0$. 
Then, the Cauchy problem \eqref{eq:NLS} is locally well-posed in $H^{s_c} (\T^d)$ provided: 
\begin{itemize}
\item $d=2$ and $m\geq4$.
\item $d=3, 4$ and $m\geq2$.\footnote{See Remark \ref{RMK:final} for a recent updates.}
\item $d=4$, $m=1$, and $\delta (j_0) \neq 2$.
\item $d\geq 5$ and $m\geq1$.
\end{itemize}
\end{theorem}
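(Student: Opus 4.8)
The plan is to establish local well-posedness in $H^{s_c}(\T^d)$ by a fixed-point argument for the Duhamel formulation of \eqref{eq:NLS}, run in the critical function-space framework of \cite{HTT11,KV14} adapted to the hyperbolic propagator $e^{it\Delta_\pm}$. Concretely, I would work in the atomic spaces $X^{s}=X^{s}_{\Delta_\pm}$ built from the $U^{2}_{\Delta_\pm}$ and $V^{2}_{\Delta_\pm}$ spaces, which satisfy $\|e^{it\Delta_\pm}\phi\|_{X^{s}([0,1])}\sim\|\phi\|_{H^{s}(\T^d)}$, admit a duality pairing with a space $Y^{-s}$ through which the Duhamel operator $u\mapsto-i\int_0^t e^{i(t-t')\Delta_\pm}F(t')\,dt'$ is controlled, and enjoy the transference principle upgrading free-solution estimates to $X^0$; in particular Theorem~\ref{THM:scale2} gives, for every $p>\frac{2(d+2-\delta(j_0))}{d-\delta(j_0)}$,
$$\|P_{\le N}u\|_{L^p_{t,x}([0,1]\times\T^d)}\lesssim N^{\frac d2-\frac{d+2}p}\,\|P_{\le N}u\|_{X^0}.$$
Since $\delta(j_0)\neq0$ here, the elliptic results of Theorem~\ref{THM:elliptic} do not apply directly and the whole argument must be carried through with the hyperbolic inputs. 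Local well-posedness then reduces in the standard way (contraction on a ball of $X^{s_c}_T$, followed by the usual deductions of uniqueness, persistence, and continuous dependence) to the scale-invariant $(2m+2)$-linear estimate
$$\Big|\iint_{[0,1]\times\T^d}\overline{v_0}\,\prod_{j=1}^{m+1}v_j\prod_{j=m+2}^{2m+1}\overline{v_j}\,dx\,dt\Big|\lesssim\|v_0\|_{Y^{-s_c}}\prod_{j=1}^{2m+1}\|v_j\|_{X^{s_c}}.$$

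To prove this multilinear estimate I need, besides the scale-invariant Strichartz estimate above, a hyperbolic bilinear Strichartz estimate of the schematic form
$$\big\|(P_{N_1}u_1)(P_{N_2}u_2)\big\|_{L^2_{t,x}([0,1]\times\T^d)}\lesssim N_2^{\gamma(d,\delta(j_0))}\Big(\tfrac{N_2}{N_1}+\tfrac1{N_1}\Big)^{\theta(d,\delta(j_0))}\|P_{N_1}u_1\|_{X^0}\|P_{N_2}u_2\|_{X^0},\qquad N_1\ge N_2,$$
with positive exponents $\gamma,\theta$. I would derive it by the classical reduction: decompose $[0,1]$ into $\sim N_1$ intervals of length $N_1^{-1}$, use the atomic structure to replace each $X^0$-piece by a superposition of free evolutions, expand in Fourier series, square out the $L^2_{t,x}$ norm, and reduce to counting pairs $(k_1,k_2)\in\Z^d\times\Z^d$ with $|k_1|\sim N_1$, $|k_2|\sim N_2$, $k_1+k_2$ in a fixed unit cube, and $|k_1|_\pm^2+|k_2|_\pm^2$ in a fixed bounded interval. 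Because $|\cdot|_\pm^2$ is indefinite, the level sets are hyperboloids, flat along the null directions, so the count must be organised around the $\delta(j_0)$ genuinely curved coordinates — this is precisely why $\delta(j_0)\ge1$ is needed and what pins down the admissible $\gamma,\theta$. (One may alternatively attempt a weaker form by interpolating Theorem~\ref{THM:scale2} against the trivial $L^\infty$ bound, but the sharp version required at criticality seems to need the direct lattice count, in the spirit of \cite{YW13_2} and the decoupling estimates behind Theorem~\ref{THM:Stri_T}.)

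With both ingredients in hand I would prove the $(2m+2)$-linear estimate by dyadic decomposition: after dualising and ordering the frequencies of the $2m+2$ factors as $N_0\ge N_1\ge\dots\ge N_{2m+1}$, one notes that the convolution constraint on the frequencies forces the two largest to be comparable, applies the bilinear estimate to those two factors in $L^2_{t,x}$, and estimates the remaining factors by Hölder using the scale-invariant Strichartz estimate at exponents strictly above the threshold $\frac{2(d+2-\delta(j_0))}{d-\delta(j_0)}$. The gain $(N_{\mathrm{sub}}/N_0)^{\theta}$ supplied by the bilinear estimate is what turns the $H^{s_c}$ weight $N_0^{s_c}$ into a geometric series summable over the dyadic scales at the critical regularity, after which the rest is bookkeeping. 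Carrying out this accounting, the estimate closes exactly under the four stated conditions on $(d,m,\delta(j_0))$: one needs the Hölder exponents produced after the bilinear pairing to lie strictly above the Strichartz threshold and the bilinear gain to dominate the top-frequency weight; the borderline triple $(d,m,\delta(j_0))=(4,1,2)$ is excluded because there the relevant exponent is exactly $4=2m+2$, at which only the $\varepsilon$-lossy estimate \eqref{BDstrcest1} is available.

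The principal obstacle is the hyperbolic bilinear Strichartz estimate. For the elliptic Laplacian one exploits the everywhere-positive curvature of spheres together with a clean transversality argument; for $\Delta_\pm$ the level hypersurfaces are ruled along their null directions, so the naive argument degenerates and one must extract the gain solely from the $\delta(j_0)$ nondegenerate directions via a careful count on lower-dimensional hyperboloids. It is the strength of the gain one can afford there — not the Strichartz input, which Theorem~\ref{THM:scale2} supplies in sharp form — that dictates the admissible range of $(d,m)$ in the theorem. A secondary and more routine issue is the complex conjugation pattern in $|u|^{2m}u=u^{m+1}\bar u^{m}$, which flips signs in the resonance function $\sum_j(\pm)|k_j|_\pm^2$; but the worst configuration is the one with all like signs, which is exactly what the bilinear and Strichartz estimates are built to handle, so it costs nothing essential.
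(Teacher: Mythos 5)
Your high-level framework (contraction in $X^{s_c}$ via the $U^2$/$V^2$ calculus, duality against $Y^{-s_c}$, dyadic decomposition of the $(2m+2)$-linear form) agrees with the paper, and the exclusion of the triple $(d,m,\delta(j_0))=(4,1,2)$ is correctly traced to the admissibility threshold degenerating to $p>4=2m+2$. However, the key technical input you propose is not what the paper uses, and it constitutes a genuine gap.

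You make the multilinear estimate hinge on a hyperbolic bilinear Strichartz estimate of the form
$\|(P_{N_1}u_1)(P_{N_2}u_2)\|_{L^2_{t,x}}\lesssim N_2^{\gamma}(N_2/N_1+N_1^{-1})^{\theta}\|P_{N_1}u_1\|_{X^0}\|P_{N_2}u_2\|_{X^0}$
with a genuine high-to-low gain $(N_2/N_1)^\theta$, to be proved by a lattice count on the level sets of $|k_1|_\pm^2+|k_2|_\pm^2$. You flag this yourself as ``the principal obstacle,'' and it is: no such estimate is established in the paper, and the count you sketch runs squarely into the degeneracy flagged in Remark~\ref{RMK:failH} — the indefinite form $|\cdot|_\pm^2$ leaves the relevant hypersurfaces ruled along null directions, so the intervals that would give almost-orthogonality (the analogue of \eqref{setsofalmort}) fail to be essentially disjoint, and the count is not controlled by the curved coordinates alone. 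The paper's only bilinear estimate, \eqref{bilinear}, carries both an $\varepsilon$-loss and no $(N_2/N_1)^\theta$ factor, which is exactly why it is relegated to the subcritical Lemma~\ref{PROP:T2m} and never invoked at criticality.

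The paper's critical multilinear estimates (Lemmas~\ref{lemma1}, \ref{lemma2}, \ref{lemma3}) avoid any bilinear $L^2$ input with a high-to-low gain. Instead, after splitting the $(2m+2)$-linear form into two $L^2_{t,x}$ halves, they decompose the high-frequency factors over cubes $C\in\mathscr{C}_{N_j}$ whose side length matches the \emph{next-largest} frequency, invoke almost orthogonality of these cube projections, and then apply H\"older with a careful mix of exponents $p$, $p_{d,m}=m(d+2)$, and $q$ (all strictly inside the admissible range from Definition~\ref{DEF:adm}) together with the scale-invariant $L^p$ Strichartz estimates on cubes (Lemma~\ref{LEM:Stri_Ga} and Corollary~\ref{COR:Stri_GY}). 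The geometric gain $\delta=-d+\tfrac{2(d+2)}{p}+s_c>0$ that sums the dyadic series comes entirely from the sign conditions \eqref{s_nm<0} and \eqref{necessarydelta} governing the choice of exponents, not from any $(N_2/N_1)^\theta$ factor. This is exactly why the paper can proceed without resolving the orthogonality issue of Remark~\ref{RMK:failH}: the method never needs to compare the top two frequencies. Your route would require proving a hyperbolic bilinear gain that the paper deliberately sidesteps, and without that estimate the contraction does not close.
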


\begin{remark}\rm
\label{RMK:ad}
When $\delta (j_0) = 0$ (the elliptic case), Theorem~\ref{THM:elliptic} establishes well-posedness in critical spaces $H^{s_c}$ for a broad range of $(d, m)$ pairs. 
Theorem~\ref{THM:Stri_scale} suggests that whenever we have a sharp (scale-invariant) Strichartz estimate of the form
\[
\| e^{\pm it\Delta} P_{\leq N}\phi \|_{L^{2(m+1)}_{t,x}([0,1]\times\mathbb{T}^d)} \lesssim N^{\frac{d}{2}-\frac{d+2}{2(m+1)}} \| P_{\leq N}\phi \|_{L^2(\mathbb{T}^d)},
\]
we should expect a corresponding well-posedness theory for \eqref{eq:NLS} in $H^{s_c}(\mathbb{T}^d)$, where $s_c = s_c(d,m)$.
By analogy with the elliptic setting, we can ask whether this relationship persists in the hyperbolic case ($\delta (j_0) \neq 0$) - specifically, whether we can establish well-posedness for \eqref{eq:NLS} in $H^{s_c}(\mathbb{T}^d)$ for those $(d,m)$ pairs satisfying
\[
\| e^{it\Delta_{\pm}} P_{\leq N}\phi \|_{L^{2(m+1)}_{t,x}([0,1]\times\mathbb{T}^d)} \lesssim N^{\frac{d}{2}-\frac{d+2}{2(m+1)}} \| P_{\leq N}\phi \|_{L^2(\mathbb{T}^d)},
\]
where $\Delta_\pm$ denotes the hyperbolic Laplacian defined in \eqref{Dpm} with $\delta (j_0) \neq 0$.

Theorem~\ref{THM:main}, in light of Theorem~\ref{THM:scale2}, provides an affirmative answer to this question for all cases except $(d,m) = (2,3)$. While we conjecture that the result should hold for $(d,m) = (2,3)$ in the hyperbolic setting as well, establishing this would require techniques beyond those developed in this paper. We plan to address this remaining case in future work.
\end{remark}

The first critical well-posedness result for NLS on compact manifolds was established by Herr--Tataru--Tzvetkov \cite{HTT11} in the elliptic setting, with subsequent developments in \cite{YW13}. The key innovation in \cite{HTT11} involved multilinear estimates through a temporal orthogonality argument, which exploits additional decay properties from localized frequencies. 

To illustrate the temporal orthogonality approach for a quadratic nonlinearity (see \cite{HTT14} for complete details), let $\mathcal{R}_M(N)$ denote the collection of all subsets of $\mathbb{Z}^d$ formed by intersecting cubes of side length $2N$ with strips of width $2M$. For parameters $1 \leq N_2 \leq N_1$, consider $C = (\xi_0 + [-N_2,N_2]^n) \cap \mathbb{Z}^d$ with $|\xi_0| \sim N_1$. The essential task is to verify that the functions $\{P_{R_k}u_1u_2\}_k$ are almost orthogonal in $L^2(I \times \mathbb{T}^d )$, where
\begin{align*}
C = \bigcup_{k \in \mathbb{Z}: |k| \sim N_1/N_2} R_k, \quad R_k \in \mathcal{R}_M(N_2), \quad M := \max\{N_2^2/N_1, 1\}.
\end{align*}
The crucial observation comes from the behavior of $\xi_1 \in R_k$:
\begin{align*}
|\xi_1|^2 = \frac{1}{|\xi_0|^2}|\xi_1 \cdot \xi_0| + |\xi_1 - \xi_0|^2 - \frac{1}{|\xi_0|^2}|(\xi_1 - \xi_0) \cdot \xi_0|^2 = M^2k^2 + O(M^2k).
\end{align*}
This implies that the intervals
\begin{align}\label{setsofalmort}
[-M^2k^2 - cM^2|k|, -M^2k^2 + cM^2|k|]
\end{align}
are essentially disjoint, which yields the desired almost orthogonality in $L^2(I \times \mathbb{T}^d )$.

\begin{remark}\rm
\label{RMK:failH}
For the hyperbolic NLS (HNLS) \eqref{eq:NLS}, applying this approach leads to
\begin{equation}
\begin{aligned}\label{HNLSalmostorth}
|\xi_1|_-^2 &= (\xi_1 \cdot a)^2 + |\xi_1 - \xi_0|^2 - \big((\xi_1 - \xi_0) \cdot a\big)^2 - 2\sum_{j \leq k} (\xi_{1j} - \xi_{0j})^2 \\
&\quad - 4\sum_{j \leq j_0}(\xi_{1j} - \xi_{0j})\xi_{0j} - 2\sum_{j \leq j_0}\xi_{0j}^2,
\end{aligned}
\end{equation}
where $|\xi_1|^2_- = -\sum_{j \leq j_0}\xi_{1j}^2 + \sum_{j > j_0}\xi_{1j}^2$. However, the final two terms in \eqref{HNLSalmostorth} may disrupt the essential disjointness property of the associated sets (analogous to those in \eqref{setsofalmort}) when $|\xi_0| \sim N_1$. Consequently, our proof strategy for the nonlinear estimates in this paper relies exclusively on spatial frequency cube decompositions. This simplified approach proves sufficient to establish the critical results.
\end{remark}

Despite Remark \ref{RMK:failH}, there may exist other temporal decomposition that still preserve the orthogonality for the hyperbolic setting.
Here, instead, we follow a different approach in proving the critical well-posedness from \cite{KV14},
where the authors use bilinear estimates; and moreover,
there is no need to exploit the temporal orthogonality of free evolutions discussed in Remark \ref{RMK:failH}.

Notice that the case $j_0=d$ (which we ignore) corresponds to the elliptic Schr\"odinger equation. Therefore, with this notation, we introduce the notion of admissible pairs, which will be used in deciding exponents pertaining to the Strichartz estimates during the analysis of nonlinear estimates.

\begin{definition}\label{DEF:adm}
Let $j_0 \in\{0,1,2,...,d\}$. 
We say that $(d,p )\in \mathbb{N}\times \mathbb{R}$ is admissible pair if 
\begin{equation}\label{admissible}
\begin{aligned}
 p>\frac{2(d+2-\delta (j_0))}{d-\delta (j_0)},
\end{aligned}
\end{equation}
in view of Remark \ref{RMK:ad}.
\end{definition}
\begin{remark}
Since $\delta (j_0)=d(d-j_0)$, we observe that for $j_0\in\{1,2,...,d-1\}$, the following holds:
\begin{equation*}
\begin{aligned}
 \underbrace{\dl(1)= \dl(d-1)}_{=1}<...<\underbrace{\delta (j_0)=\dl(d-j_0)}_{=j_0}<...<\underbrace{\dl(d/2)}_{d/2},\quad\text{if}\,\,d\,\,\text{is}\,\,\text{even},
 \end{aligned}
\end{equation*}
and 
\begin{equation*}
\begin{aligned}
 \underbrace{\dl(1)=\dl(d-1)}_{=1}<...<\underbrace{\delta (j_0)=\dl(d-j_0)}_{=j_0}<...<\underbrace{\dl(\frac{d-1}{2})=\dl(\frac{d+1}{2})}_{\frac{d-1}{2}},\quad\text{if}\,\,d\,\,\text{is}\,\,\text{odd}.
\end{aligned}
\end{equation*}
 Therefore, for $d\geq 2$ and $j_0\in\{1,2,...,d-1\}$, the right hand side of \eqref{admissible} attains its largest value at $j_0=\frac{d}{2}$ if $d$ is even, and also at $j_0=\frac{d-1}{2}$ if $d$ is odd. By this means, the condition on $p$ with respect to Definition \ref{DEF:adm} covering all $j_0\in\{1,2,...,d-1\}$ can be read as follows
\begin{align}\label{worstcase}
p>\begin{cases}
   2+\frac{8}{d},\quad\quad\text{if}\,\,d\,\,\text{is even},\\ 2+\frac{8}{d+1},\quad\text{if}\,\,d\,\,\text{is odd}.
    \end{cases}
\end{align}
However, in the case $d=4$, $j_0=1,3$, $m=1$, the condition on $p$ given by \eqref{worstcase} is not useful in practice. Therefore, in view of Definition \ref{DEF:adm}, we choose an admissible pair $(4,p)$ with $p>\frac{10}{3}$ in this case. Moreover, once $j_0=d$ (purely elliptic case), we have $\dl(d)=0$, thus \eqref{admissible} implies that $p>\frac{2(d+2)}{d}$ for all $n\geq 1$. 
\end{remark}

\begin{remark}\label{RMK:final}
\rm 
After completing our work, 
we noticed that a paper was posted on arXiv today by \cite{LZ}, which claims to resolve the cases $d=3$ and $\delta (j_0)=1$, previously proposed as open problems in Remarks \ref{RMK:c1} and \ref{RMK:c2}.  
As a consequence, 
they \cite{LZ} establish the sharp (i.e., without $\varepsilon$-loss) Strichartz estimate in Theorem \ref{THM:scale2} for $d=3$ and $\delta (j_0)=1$, 
together with sharp local well-posedness in this setting, 
thereby obtaining the same results as Theorems \ref{THM:main0} and \ref{THM:main} but only for $d = 3$. 

We would like to emphasize that while \cite{LZ} obtained the sharp $L^4$ Strichartz estimate on $\T^3$, 
ours has an $\eps$-derivative loss.
Nevertheless, their well-posedness results coincide with ours in the case $d = 3$.

The proof of Strichartz estimates in \cite{LZ} is based on incidence geometry, similar to the approach in \cite{HK24} for the $L^4$ Strichartz estimate on $\T^2$. 
They then interpolate between the $L^4$ and $L^\infty$ estimates to obtain $L^p$ bounds for $p>4$. 
The proof of $L^p$ estimate for $p>4$ differs from our approach, 
which is instead based on an $\varepsilon$-removal argument and the $\l^2$-decoupling results from \cite{BD17}. 
Their $L^p$ estimates rely crucially on the sharp endpoint Strichartz estimate (in their case, the $L^4$ estimate when $(d,\delta (j_0))=(3,1)$).
By contrast, our method is more robust and applies to all other dimensions, where such endpoint Strichartz estimates are not available.
\end{remark}

\subsection{Outline of the paper} In Section \ref{preliminaries}, we introduce notations, function spaces and useful lemmas in order to use them in the rest of the paper. In Section \ref{scinvstr}, we follow the argument of Killip-Vişan to remove the $\eps$-loss from Theorem \ref{THM:Stri_T}. In Section \ref{SectNonlinear}, exploiting the result of Section \ref{epsilonremoval}, we prove nonlinear estimates using the critical function space theory $X^s$. Section \ref{wellposednessHNLS} is devoted to proving our main result (Theorem \ref{THM:main}). Finally, in Appendix \ref{appendix} we consider three dimensional cubic NLS equation and prove the related nonlinear estimate in the relevant critical function space.  

 \section{Preliminaries}\label{preliminaries}
 \subsection{Notation}
 We write $A\lesssim B$ to indicate that there is a constant $C>0$ such that $A\leq CB$, also denote $A\sim B$ when $A\lesssim B \lesssim A$. For the Fourier transform of functions, we follow the following conventions:
 \begin{align*}
     \widehat{f}(k)=\int_{\mathbb{T}^d } e^{-2\pi ik\cdot x}f(x)dx, 
    \quad f(x)=\sum_{k \in\mathbb{Z}^d}e^{2\pi ik\cdot x}\widehat{f}(k).
 \end{align*}
Let  $\varphi$ denote a smooth radial cutoff such that $\varphi(x)=1$ for $|x|\leq 1$ and $\varphi(x)=0$ for $|x|\geq 2$. We define the Littlewood-Paley projection operators 
\[
\widehat{P_1f}(k):=\widehat{f}(k)\prod_{j=1}^d \varphi(k_j),
\] 
where  $k=(k_1,...,k_d) \in \Z^d$.
For a dyadic number $N\geq 2$,
\begin{equation*}
\begin{aligned} 
\widehat{P_{\leq N}f}(k)&:=\widehat{f}(k)\prod_{j=1}^d\varphi(k_j/N),\\
\widehat{P_{N}f}(k)&:=\widehat{f}(k)\prod_{j=1}^d\big[\varphi(k_j/N)-\varphi(2k_j/N)\big].
\end{aligned}
\end{equation*}
More generally, for any measurable set $S\subset \mathbb{Z}^d$, we write $P_S$ to denote the Fourier projection operator with symbol $\chi_{S}$, where $\chi_{S}$ denotes the characteristic function of $S$.
\subsection{Function Spaces}
 In this part, we introduce the main function spaces $X^s$ and $Y^s$ which are used as substitutes for Fourier restriction spaces $X^{s,b}$ at the critical scaling index. These spaces are based on function spaces $U^p$ and $V^p$ and are initially used to construct solutions to dispersive PDES in \cite{HHK09, HTT11, HTT14}. In what follows, we discuss their definitions and properties for the hyperbolic Schr\"odinger equations, for a detailed discussion see \cite{HHK09, HTT11}. Assume $\mathcal{H}$ is a separable Hilbert space over $\mathbb{C}$, and $\mathcal{Z}$ denotes the set of finite partitions $0=t_0<t_1<...<t_k\leq T$ with the convention that $v(T):=0$ for all functions $v: [0,T)\rightarrow \mathcal{H}$.  
 \begin{definition}
 Let $1\leq p <\infty$. For $\{t_k\}_{k=0}^K\in \mathcal{Z}$ and $\{\phi_k\}_{k=0}^{K-1}\subset \mathcal{H}$ with $\sum_{k=0}^{K-1}\Vert \phi_k\Vert_{\mathcal{H}}^p=1$, we call a piecewise defined function $a:[0,T)\rightarrow \mathcal{H}$,
 \begin{align*}
     a=\sum_{k=1}^K\mathds{1}_{[t_{k-1},t_k)}\phi_{k-1}
 \end{align*}
 a $U^p$-atom. Therefore, the atomic space $U^p([0,T);\mathcal{H})$ is defined to be the set of all functions $u:[0,T)\rightarrow \mathcal{H}$ of the form
 \begin{align*}
  u=\sum_{j=1}^{\infty}\lambda_ja_j\quad \text{for}\,\,U^p-\text{atoms} \,\,a_j,\,\,\{\lambda_j\}\in \ell^1(\mathbb{C}),
 \end{align*}
with norm
 \begin{align*}
     \Vert u\Vert_{U^p}:=\inf\Big\{\sum_{j=1}^{\infty}|\lambda_j|: u=\sum_{j=1}^{\infty}\lambda_ja_j\,\,\text{with}\,\,\{\lambda_j\}\in \ell^1(\mathbb{C})\,\,\text{and}\,\,U^p\,\,\text{atom}\,\,a_j\Big\}. \end{align*}
 \end{definition}
 \begin{definition}
 Let $1\leq p<\infty$. \begin{itemize}
 \item[(i)] We define $V^p([0,T);\mathcal{H})$ as the normed space of all functions $v: [0,T)\rightarrow \mathcal{H}$ with 
 \begin{align}\label{V^pnorm}
     \Vert v\Vert_{V^p}:=\sup_{\{t_k\}_{k=0}^K\in\mathcal{Z}}\Big(\sum_{k=1}^K\Vert v(t_k)-v(t_{k-1})\Vert_{\mathcal{H}}^p\Big)^{\frac{1}{p}}<\infty.
 \end{align}
  \item[(ii)] The space $V^p_{rc}([0,T);\mathcal{H})$ denotes the closed subspace of all right-continuous $v\in V^p$ such that $v(0)=0$, endowed with the same norm \eqref{V^pnorm}.
 \end{itemize}
 \end{definition}
 \begin{remark}\label{embeddings}
 For $1\leq p < q<\infty$, the following embeddings hold:
 \begin{align}\label{embeddingUV}
  U^p([0,T);\mathcal{H})\hookrightarrow V^p_{rc}([0,T);\mathcal{H})\hookrightarrow U^q([0,T);\mathcal{H})\hookrightarrow L^{\infty}([0,T);\mathcal{H}).   
 \end{align}
 Thus, the functions in $U^p([0,T);\mathcal{H})$ are considered as right continuous, and $u(0)=0$ for all $u\in U^p([0,T);\mathcal{H})$. 
 \end{remark}
 
 In what follows, we shall consider $\mathcal H = H^s =  H^s(\T^d)$ for some $s \in \R$.
 
 \begin{definition}
For $s\in\mathbb{R}$, we define $U_{\Delta_{\pm}}^pH^s$ and $V_{\Delta_{\pm}}^pH^s$ as spaces of all functions $u: [0,T)\rightarrow H^s(\mathbb{T}^d )$ such that the map $t \rightarrow e^{-it\Delta_{\pm}} u(t)$ is in $U^p([0,T);H^s)$ and $V^p([0,T);H^s)$ respectively, with norms 
\begin{align*}
    \Vert u\Vert_{U^p_{\Delta_{\pm}}H^s}:= \Vert e^{-it\Delta_{\pm}}u\Vert_{U^p([0,T);H^s)},\quad \Vert u\Vert_{V^p_{\Delta_{\pm}}H^s}:= \Vert e^{-it\Delta_{\pm}}u\Vert_{V^p([0,T);H^s)}.
\end{align*}
 \end{definition}
 
 \begin{definition}
 Let $s\in\mathbb{R}$. We define $X^s$ and $Y^s$ as spaces of all functions $u: [0,T)\rightarrow H^s(\mathbb{T}^d )$ such that for every $k \in \mathbb{Z}^d$, the map $t\rightarrow \widehat{e^{-it\Delta_{\pm}}u(t)}(k)$  is in $U^2([0,T))$ and $V^2_{\text{rc}}([0,T))$ respectively, for which the norms
 \begin{equation*}
 \begin{aligned}
 \Vert u\Vert_{X^s([0,T))}&:=\Big(\sum_{k \in\mathbb{Z}^d}\langle k \rangle^{2s}\Vert\widehat{e^{-it\Delta_{\pm}}u(t)}(k)\Vert_{U^2_t}^2\Big)^{\frac{1}{2}}, \\ 
 \Vert u\Vert_{Y^s([0,T))}&:=\Big(\sum_{k\in\mathbb{Z}^d}\langle k\rangle^{2s}\Vert\widehat{e^{-it\Delta_{\pm}}u(t)}(k)\Vert_{V^2_t}^2\Big)^{\frac{1}{2}},
 \end{aligned}
 \end{equation*}
 are finite.
 \end{definition} 
 
 We have the following continuous embeddings
 \begin{align}\label{U^2trXsembed}
  U^2_{\Delta_{\pm}}H^s\hookrightarrow X^s \hookrightarrow Y^s \hookrightarrow V^2_{\Delta_{\pm}}H^s.  
 \end{align} 
\begin{lemma}\label{linearestlemma}
 Let $s\geq 0$ and $f\in H^s(\mathbb{T}^d )$. Then,
 \begin{equation*}
 \begin{aligned}
  \Vert e^{it\Delta_{\pm}}f\Vert_{Y^s([0,T))}&\lesssim \Vert e^{it\Delta_{\pm}}f\Vert_{X^s([0,T))} \lesssim \Vert f\Vert_{H^s}, \\
 \Vert u \Vert_{L^{\infty}_tH^s_x([0,T)\times \mathbb{T}^d )}&\lesssim  \Vert u\Vert_{Y^s([0,T))}\lesssim  \Vert u\Vert_{X^s([0,T))}.
 \end{aligned}
 \end{equation*}
 \end{lemma}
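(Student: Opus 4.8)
The plan is to reduce each of the four asserted inequalities to elementary properties of the scalar spaces $U^2([0,T))$ and $V^2_{rc}([0,T))$, applied one Fourier mode at a time and then resummed against the weight $\jb{k}^{2s}$. Throughout I would write $v_k(t):=\widehat{e^{-it\Delta_{\pm}}u(t)}(k)$, so that by definition $\|u\|_{X^s}^2=\sum_{k}\jb{k}^{2s}\|v_k\|_{U^2_t}^2$ and $\|u\|_{Y^s}^2=\sum_{k}\jb{k}^{2s}\|v_k\|_{V^2_t}^2$, and I would record the basic identity $|\widehat{u(t)}(k)|=|v_k(t)|$ for every $t$, valid because multiplication by the phase $e^{it|k|_{\pm}^2}$ is unimodular.

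The two inequalities of the form $\|\cdot\|_{Y^s}\lesssim\|\cdot\|_{X^s}$ appearing in the statement are both instances of the embedding $X^s\hookrightarrow Y^s$ in \eqref{U^2trXsembed}, which I would deduce from the embedding $U^2([0,T))\hookrightarrow V^2_{rc}([0,T))$ in \eqref{embeddingUV}: its implied constant is absolute, hence independent of $k$, so applying it to each $v_k$ and summing gives $\|u\|_{Y^s}^2=\sum_k\jb{k}^{2s}\|v_k\|_{V^2_t}^2\lesssim\sum_k\jb{k}^{2s}\|v_k\|_{U^2_t}^2=\|u\|_{X^s}^2$.

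For $\|e^{it\Delta_{\pm}}f\|_{X^s}\lesssim\|f\|_{H^s}$ I would observe that for $u(t)=e^{it\Delta_{\pm}}f$ one has $e^{-it\Delta_{\pm}}u(t)=f$, so $v_k(t)\equiv\widehat f(k)$ is constant in $t$. Since $\mathds 1_{[0,T)}$ is itself a $U^2$-atom — take the trivial partition $0=t_0<t_1=T$ with $\phi_0=1$ — one has $\|c\,\mathds 1_{[0,T)}\|_{U^2_t([0,T))}\le|c|$ for every $c\in\mathbb C$, and therefore, since $v_k\equiv\widehat f(k)$, $\|e^{it\Delta_{\pm}}f\|_{X^s}^2\le\sum_k\jb{k}^{2s}|\widehat f(k)|^2=\|f\|_{H^s}^2$. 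Combined with the previous paragraph this also gives $\|e^{it\Delta_{\pm}}f\|_{Y^s}\lesssim\|f\|_{H^s}$.

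The remaining inequality $\|u\|_{L^\infty_tH^s_x}\lesssim\|u\|_{Y^s}$ rests on the scalar bound $\|v\|_{L^\infty_t([0,T))}\le\|v\|_{V^2_t([0,T))}$: with the convention $v(T)=0$, for $t\in(0,T)$ the partition $\{0,t,T\}$ yields $|v(t)|^2=|v(t)-v(T)|^2\le|v(t)-v(0)|^2+|v(t)-v(T)|^2\le\|v\|_{V^2_t}^2$, while $t=0$ is covered by the partition $\{0,T\}$. Applying this to each $v_k$ and using $|\widehat{u(t)}(k)|=|v_k(t)|$ gives, for every $t\in[0,T)$,
\[
\|u(t)\|_{H^s_x}^2=\sum_{k\in\mathbb Z^d}\jb{k}^{2s}|v_k(t)|^2\le\sum_{k\in\mathbb Z^d}\jb{k}^{2s}\|v_k\|_{V^2_t}^2=\|u\|_{Y^s}^2,
\]
and taking the supremum over $t$ completes the proof. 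I do not expect a genuine obstacle: all the analytic content is already contained in the $U^p/V^p$ embeddings \eqref{embeddingUV}, and the only points needing care are bookkeeping — the frequency-independence of the constant in $U^2\hookrightarrow V^2$, the use of the convention $v(T)=0$ in the $V^2\hookrightarrow L^\infty$ step, and the elementary interchange $\sup_t\sum_k(\cdots)\le\sum_k\sup_t(\cdots)$ used to pull the time supremum inside the frequency sum.
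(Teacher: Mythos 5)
Your proof is correct, and since the paper states Lemma \ref{linearestlemma} without proof (citing the $U^p/V^p$ literature \cite{HHK09,HTT11}), what you have written is a clean expansion of exactly the standard mode-by-mode reductions that the paper is implicitly invoking. The three scalar ingredients you isolate — $U^2\hookrightarrow V^2_{rc}$ (constant uniform in $k$), $\|c\,\mathds 1_{[0,T)}\|_{U^2}\le|c|$ for the free evolution, and the telescoping partition bound $\|v\|_{L^\infty}\le\|v\|_{V^2}$ via the convention $v(T)=0$ — are precisely the facts recorded in \cite[Prop.~2.2, 2.4]{HHK09} and \cite[Rmk.~2.8]{HTT11}, and the diagonalization over frequencies with weight $\jb{k}^{2s}$ is immediate from the definitions of $X^s,Y^s$. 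One minor remark on bookkeeping: the paper's Remark~\ref{embeddings} asserts $u(0)=0$ for $u\in U^p$ and Definition of $V^p_{rc}$ imposes $v(0)=0$, which would force $\widehat f\equiv 0$ for the free flow to lie in $X^s$ or $Y^s$; this is an artefact of transplanting the half-line convention of \cite{HHK09,HTT11} (there $u(-\infty)=0$) onto $[0,T)$, and your argument correctly sidesteps it by using $v(T)=0$ rather than $v(0)=0$ in the $L^\infty$ step. No gap.
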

 \begin{lemma}\label{lemmaduality}
 Let $s\geq 0$ and $f\in L^1([0,T);H^s(\mathbb{T}^d ))$. Then,
 \begin{equation*} 
 \begin{aligned}
  \Big\Vert \int_0^te^{i(t-s)\Delta_{\pm}}f(s)ds\Big\Vert_{X^s([0,T))}\lesssim \sup_{\Vert v\Vert_{Y^{-s}([0,T))}\leq 1}\Big|\int_0^T\int_{\mathbb{T}^d }f(t,x)\overline{v(t,x)}dxdt\Big|.
 \end{aligned}
 \end{equation*}
 \end{lemma}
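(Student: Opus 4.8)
The plan is to run the standard duality argument for the Duhamel term in the atomic-space framework (Hadac--Herr--Koch \cite{HHK09}, Herr--Tataru--Tzvetkov \cite{HTT11}): reduce the $X^s$-estimate to a scalar estimate on each Fourier mode, solve that scalar problem by $U^2$--$V^2$ duality, and then reassemble an almost-optimal test function by $\ell^2$-duality in the frequency variable.

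\textbf{Step 1 (reduction to the Fourier side).} Set $F(t) := \int_0^t e^{-is\Delta_{\pm}} f(s)\, ds$, so that $e^{-it\Delta_{\pm}} \int_0^t e^{i(t-s)\Delta_{\pm}} f(s)\, ds = F(t)$ and, for each $k \in \Z^d$, $\widehat{F(t)}(k) = \int_0^t f_k(s)\, ds$ where $f_k(t) := \widehat{e^{-it\Delta_{\pm}} f(t)}(k)$. By the definition of the $X^s$-norm this gives
\[
\Big\| \int_0^t e^{i(t-s)\Delta_{\pm}} f(s)\, ds \Big\|_{X^s([0,T))}^2 = \sum_{k \in \Z^d} \jb{k}^{2s} \Big\| \int_0^\cdot f_k(s)\, ds \Big\|_{U^2_t}^2 ,
\]
so everything is reduced to estimating the scalar quantities $\|\int_0^\cdot f_k\|_{U^2_t}$.

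\textbf{Step 2 (scalar duality and reassembly).} Since $f_k \in L^1([0,T))$, the duality between $U^2$ and $V^2_{rc}$ of \cite{HHK09} — for which, when $u$ has the form $\int_0^\cdot f$, the Stieltjes pairing collapses to the Lebesgue pairing — yields
\[
\Big\| \int_0^\cdot f_k(s)\, ds \Big\|_{U^2_t} \les a_k := \sup_{\|g\|_{V^2_{rc}([0,T))} \le 1} \Big| \int_0^T f_k(t)\, \overline{g(t)}\, dt \Big| .
\]
Hence, with $A := \big( \sum_k \jb{k}^{2s} a_k^2 \big)^{1/2}$, Step 1 gives $\big\| \int_0^t e^{i(t-s)\Delta_{\pm}} f(s)\, ds \big\|_{X^s} \les A$, and it remains to exhibit $v \in Y^{-s}$ with $\|v\|_{Y^{-s}([0,T))} \le 1$ and $\big| \int_0^T \int_{\T^d} f\bar v \big| \gtrsim A$. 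Fix $\eps > 0$; for each $k$ choose $g_k \in V^2_{rc}$ with $\|g_k\|_{V^2} \le 1$ and, after multiplying by a suitable unimodular constant, $\int_0^T f_k \overline{g_k} \ge (1-\eps) a_k \ge 0$, and define $v$ through $\widehat{e^{-it\Delta_{\pm}} v(t)}(k) := c_k\, g_k(t)$ with $c_k := A^{-1} \jb{k}^{2s} a_k$. Then
\[
\|v\|_{Y^{-s}}^2 = \sum_k \jb{k}^{-2s} c_k^2 \|g_k\|_{V^2}^2 \le A^{-2} \sum_k \jb{k}^{2s} a_k^2 = 1 ,
\]
while, since $e^{\pm it\Delta_{\pm}}$ multiplies the $k$-th Fourier coefficient by a unimodular phase, Plancherel in $x$ and $t$ gives $\int_0^T \int_{\T^d} f\bar v = \sum_k c_k \int_0^T f_k\overline{g_k} \ge (1-\eps) A$. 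Letting $\eps \downarrow 0$ completes the argument.

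\textbf{Where the difficulty lies.} There is no essential obstacle: the proof is bookkeeping around two standard ingredients. The first is the $U^2$--$V^2$ duality of \cite{HHK09}, which we use as a black box. The second, and the only point that needs a little care, is checking that the function $v$ built mode-by-mode in Step 2 is a bona fide element of $Y^{-s}$ — measurable in $(t,x)$, right-continuous, and vanishing at $t = 0$ — which follows from the $\ell^2$-summability $\sum_k \jb{k}^{-2s} c_k^2 \|g_k\|_{V^2}^2 \le 1$ and the corresponding properties of the $g_k$. The $\eps$-loss from the near-optimal choices is harmless since only a $\les$ bound is claimed.
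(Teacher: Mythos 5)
The paper states this lemma without proof, importing it as a standard fact from the $U^p$--$V^p$ theory of \cite{HHK09, HTT11}. Your argument is the correct standard proof of that fact: you unwind the definition of $X^s$ to reduce to a scalar $U^2$-estimate on each Fourier mode, apply the $U^2$--$V^2$ duality of \cite{HHK09} for absolutely continuous inputs, and reassemble a near-optimal $Y^{-s}$ test function by $\ell^2$-duality across modes, with the phases from $e^{\pm it\Delta_{\pm}}$ cancelling in the Plancherel pairing exactly as you note. The one point worth flagging explicitly is that HHK09 pair $U^2$ with $V^2$, not with $V^2_{rc}$; passing to the supremum over $V^2_{rc}$ (which is what the $Y^{-s}$ norm sees) costs only a universal constant --- subtract the value at $0$, take the right-continuous modification, and approximate the resulting indicator of $[0,T)$ by a ramp in $V^2_{rc}$ --- and this is harmless given the $\lesssim$ in the statement. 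The remaining measurability and Fubini checks (for instance, restricting to finitely many modes and passing to the limit) are routine, as you say.
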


\begin{lemma}\label{LEM:Stri_Y} 
For all $N\geq 1$ and admissible pairs $(d,p)$ given in Definition \ref{DEF:adm}, we have
\begin{equation}\label{StricartzwithN}
\begin{aligned}
  \Vert P_{\leq N}u\Vert_{L^p([0,T)\times \mathbb{T}^d )} \lesssim N^{\frac{d}{2}-\frac{d+2}{p}}\Vert P_{\leq N}u\Vert_{U^p_{\Delta_{\pm}}L^2}\lesssim N^{\frac{d}{2}-\frac{d+2}{p}}\Vert P_{\leq N}u\Vert_{Y^0([0,T))}. 
\end{aligned}
\end{equation}
\end{lemma}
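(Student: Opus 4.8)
The plan is to deduce Lemma~\ref{LEM:Stri_Y} by transferring the scale-invariant Strichartz estimate of Theorem~\ref{THM:scale2} from free evolutions to $U^p_{\Delta_\pm}L^2$-functions via the atomic structure of $U^p$, and then to pass to $Y^0$ using the embedding in \eqref{U^2trXsembed}. First I would record the key point: for an admissible pair $(d,p)$ as in Definition~\ref{DEF:adm} one has $p > \frac{2(d+2-\delta(j_0))}{d-\delta(j_0)}$, which is precisely the hypothesis of Theorem~\ref{THM:scale2}, so
\[
\Vert e^{it\Delta_{\pm}}P_{\leq N}\phi\Vert_{L^p_{t,x}([0,T)\times\mathbb{T}^d)}\lesssim \Vert e^{it\Delta_{\pm}}P_{\leq N}\phi\Vert_{L^p_{t,x}([0,1]\times\mathbb{T}^d)}\lesssim N^{\frac{d}{2}-\frac{d+2}{p}}\Vert P_{\leq N}\phi\Vert_{L^2(\mathbb{T}^d)}
\]
for all $\phi$, where the first inequality holds since $T \le 1$ (or, if $T>1$, by the standard almost-periodicity/time-translation argument together with the fact that the number of unit subintervals is absorbed in the $L^p$, $p<\infty$, summation — but in this paper $T$ may be taken $\le 1$). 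This is the linear input.

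Next I would upgrade this to the $U^p$ level. Let $P_{\leq N}u$ have an atomic decomposition $P_{\leq N}u = \sum_j \lambda_j a_j$ with $U^p$-atoms $a_j = \sum_{k} \mathbf 1_{[t_{k-1}^{(j)},t_k^{(j)})}\phi_{k-1}^{(j)}$; since the Littlewood–Paley multiplier $P_{\leq N}$ and the propagator $e^{-it\Delta_\pm}$ commute with these decompositions, we may assume each $a_j$ is frequency-localized to $|k|\lesssim N$. For a single atom, $e^{it\Delta_\pm}a_j$ equals $e^{it\Delta_\pm}\phi_{k-1}^{(j)}$ on each subinterval $[t_{k-1}^{(j)},t_k^{(j)})$, so by the triangle inequality in $\ell^{p}$ over $k$ (using that the subintervals are disjoint and that $L^p_t$, $p\ge 1$, dominates the $\ell^p$-sum over a disjoint partition) together with the displayed free-evolution estimate applied to each $\phi_{k-1}^{(j)}$,
\[
\Vert e^{it\Delta_\pm}a_j\Vert_{L^p_{t,x}}^p = \sum_k \Vert e^{it\Delta_\pm}\phi_{k-1}^{(j)}\Vert_{L^p_{t,x}([t_{k-1}^{(j)},t_k^{(j)})\times\mathbb{T}^d)}^p \lesssim N^{p(\frac d2-\frac{d+2}{p})}\sum_k \Vert \phi_{k-1}^{(j)}\Vert_{L^2}^p \le N^{p(\frac d2-\frac{d+2}{p})},
\]
the last step using the atom normalization $\sum_k \Vert\phi_{k-1}^{(j)}\Vert_{L^2}^p = 1$. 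Hence $\Vert a_j\Vert_{U^p_{\Delta_\pm}L^2}\text{-atoms map into } L^p_{t,x}$ with norm $\lesssim N^{\frac d2-\frac{d+2}{p}}$, and summing the absolutely convergent atomic series gives $\Vert P_{\leq N}u\Vert_{L^p_{t,x}}\lesssim N^{\frac d2-\frac{d+2}{p}}\sum_j|\lambda_j|$; taking the infimum over atomic decompositions yields the first inequality in \eqref{StricartzwithN}.

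Finally, the second inequality in \eqref{StricartzwithN} is immediate from the embedding $Y^0\hookrightarrow$ (wait — it goes the other way): in fact \eqref{U^2trXsembed} gives $U^2_{\Delta_\pm}L^2 \hookrightarrow X^0 \hookrightarrow Y^0$, and since $p>2$ for every admissible pair, the embedding chain in \eqref{embeddingUV} gives $V^2_{rc}\hookrightarrow U^p$, hence $Y^0 \hookrightarrow V^2_{\Delta_\pm}L^2 \hookrightarrow U^p_{\Delta_\pm}L^2$; therefore $\Vert P_{\leq N}u\Vert_{U^p_{\Delta_\pm}L^2}\lesssim \Vert P_{\leq N}u\Vert_{Y^0([0,T))}$, which chained with the first inequality completes the proof. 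The main obstacle — really the only nontrivial point — is the transference step: one must be careful that the admissibility condition in Definition~\ref{DEF:adm} is exactly matched to the validity range $p > \frac{2(d+2-\delta(j_0))}{d-\delta(j_0)}$ of the scale-invariant estimate of Theorem~\ref{THM:scale2}, so that no $\varepsilon$-loss reappears, and that the $\ell^p$-over-subintervals bookkeeping in the atom estimate uses $p\ge 1$ correctly; everything else is the standard $U^p$–$V^p$ machinery.
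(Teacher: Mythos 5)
Your proposal is correct and follows essentially the same route as the paper: the first inequality in \eqref{StricartzwithN} is obtained by transferring Theorem~\ref{THM:scale2} to $U^p_{\Delta_\pm}L^2$ via the atomic decomposition (the paper simply cites \cite[Proposition 2.19]{HHK09} and \cite[Corollary 3.2]{HTT11} for the transference step you spell out), and the second inequality is obtained from the embeddings \eqref{U^2trXsembed} and \eqref{embeddingUV} with $p>2$. The only detail the paper makes explicit that you gloss over is that the $V^p_{rc}$ and $V^p$ norms coincide, so $Y^0$ indeed lands in $V^2_{rc,\Delta_\pm}L^2$ before \eqref{embeddingUV} can be applied; this is a minor technicality and your argument is sound.
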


\begin{proof}
We first note that $p > 2$ from \eqref{admissible}.
The first estimate in \eqref{StricartzwithN} (resp., \eqref{StricartzwithCcubes}) follows from \eqref{BDstrcest2} (resp., \eqref{StricartzwithC}) by using the atomic structure of $U^p$, see \cite[Proposition 2.19]{HHK09} and \cite[Corollary
3.2]{HTT11} for details. Recall that the induced norm on $V^p_{\text{rc}}([0,T);L^2)$ is the same as that for $V^p([0,T);L^2)$. Then, the second estimates are easy consequences of the embeddings \eqref{embeddingUV} and \eqref{U^2trXsembed}.
\end{proof}

\section{Scale invariant Strichartz estimates}\label{scinvstr}
In this section, our goal is to remove the $N^{\eps}$ loss from the Strichartz estimate \eqref{BDstrcest1} of Theorem \ref{THM:Stri_T}, when $p> \frac{2(d+2-\delta (j_0))}{d-\delta (j_0)}$, thus proving Theorem \ref{THM:scale2}. 
We shall follow the same strategy as in \cite{KV14}.
Then, we use the hyperbolic Galilean transformation, determined by the hyperbolic Laplacian operator $\Delta_\pm$, to relocate the centre of the frequency localisation.

\subsection{The \texorpdfstring{$\eps$}{Lg}-removal argument}
\label{epsilonremoval}

In what follows, we focus on proving Theorem~\ref{THM:scale2}. Let us define the convolution kernels associated with the hyperbolic propagator as
\begin{align*}
K_N^\pm(t,x) &:= \big[e^{it\Delta_{\pm}} P_{\leq N}\delta_0\big](x) \\
&= \sum_{k \in \mathbb{Z}^d} \exp\big(2\pi i x \cdot k + 2\pi i t H_\varepsilon(k)\big) \cdot \prod_{j=1}^d \varphi\bigg(\frac{k_j}{N}\bigg),
\end{align*}
where $P_{\leq N}$ denotes the frequency projection to modes $|k| \leq N$, $H_{\varepsilon}(k)$ is as in \eqref{Heps}. 
In the special case when $\delta (j_0) = 0$, we simplify notation by writing $K_N^\pm$ as simply $K_N$. 
For this kernel, we establish the following dispersive estimate.

\begin{lemma}\label{LEM:disK}
Let $0\leq a_j\leq q_j<N$ such that $(a_j,q_j)=1$ and $|\eps_j t-\frac{a_j}{q_j}|\leq \frac{1}{q_jN}$. Then, we have
\begin{align}\label{K_Nestimate}
    |K_N^\pm (t,x)|\lesssim \prod_{j=1}^d\frac{N}{\sqrt{q_j}(1+N|\eps_j t-\frac{a_j}{q_j}|^{\frac{1}{2}})}
\end{align}
uniformly for $t\in[0,1]$.
\end{lemma}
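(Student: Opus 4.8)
The plan is to exploit that the kernel $K_N^\pm$ factorises completely across coordinates. By \eqref{linearSol}, \eqref{kpm} (see also \eqref{Heps}), the symbol is a diagonal quadratic form, $H_\varepsilon(k) = \sum_{j=1}^d \sigma_j \varepsilon_j k_j^2$ with $\sigma_j \in \{+1,-1\}$, and the cutoff $\prod_{j=1}^d \varphi(k_j/N)$ is a tensor product; hence the sum over $k \in \mathbb{Z}^d$ splits and
\begin{align*}
K_N^\pm(t,x) = \prod_{j=1}^d S_j(t,x_j), \qquad S_j(t,x_j) := \sum_{n \in \mathbb{Z}} e^{2\pi i (x_j n + \sigma_j \varepsilon_j t\, n^2)}\, \varphi\!\Big(\tfrac{n}{N}\Big).
\end{align*}
Since flipping $\sigma_j$ merely conjugates $S_j$ and leaves $|S_j|$ unchanged, it suffices to prove, for each $j$, the one-dimensional bound
\begin{align*}
|S_j(t,x_j)| \lesssim \frac{N}{\sqrt{q_j}\,\big(1 + N|\varepsilon_j t - a_j/q_j|^{1/2}\big)},
\end{align*}
uniformly in $x_j \in \mathbb{T}$ and $t \in [0,1]$, under the hypotheses $(a_j,q_j) = 1$, $q_j < N$, $|\varepsilon_j t - a_j/q_j| \le \tfrac{1}{q_j N}$, and then take the product over $j$.

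For the one-dimensional estimate I would first remove the smooth weight: since $\varphi$ is smooth and supported in $\{|\cdot|\le 2\}$, summation by parts in $n$ writes $S_j$ as a bounded superposition (with $O(1)$ total weight, since $\|\varphi'\|_{L^1}$ localises at scale $N$) of sharp-cutoff sums $\sum_{n\le M} e^{2\pi i(\alpha n^2+\beta n)}$ with $M\lesssim N$, $\alpha=\sigma_j\varepsilon_j t$, $\beta = x_j$. For such a sum I would run the classical major-arc Weyl argument: partition $n$ into residue classes modulo $q_j$, so that after completing the square the inner sum over a class is a complete quadratic Gauss sum $\sum_{s \bmod q_j} e^{2\pi i a_j s^2/q_j}$, of modulus exactly $\sqrt{q_j}$, while the outer sum ranges over $\lesssim N/q_j$ terms whose phase has derivative $\sim q_j N\,|\varepsilon_j t - a_j/q_j|$ on the relevant range; estimating the latter by van der Corput (or a direct geometric-series bound) yields
\begin{align*}
\Big|\sum_{n\le M} e^{2\pi i(\alpha n^2+\beta n)}\Big| \lesssim \frac{N}{\sqrt{q_j}\,(1+N|\beta_j|^{1/2})} + \sqrt{q_j}\,(1+N^2|\beta_j|)^{1/2}, \qquad \beta_j := \varepsilon_j t - \tfrac{a_j}{q_j},
\end{align*}
which is Weyl's inequality on a major arc in the form recorded, e.g., in \cite{B93, KV14}.

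It remains to absorb the error term. The hypotheses $q_j < N$ and $|\beta_j| \le \tfrac{1}{q_j N}$ force $N^2|\beta_j| \le N/q_j$, hence $N|\beta_j|^{1/2} \le (N/q_j)^{1/2}$, so
\begin{align*}
\sqrt{q_j}\,(1+N^2|\beta_j|)^{1/2} \lesssim \sqrt{q_j}\,(N/q_j)^{1/2} = \sqrt{N} = \frac{N}{\sqrt{q_j}\,(N/q_j)^{1/2}} \lesssim \frac{N}{\sqrt{q_j}\,(1+N|\beta_j|^{1/2})}.
\end{align*}
This gives the claimed bound for $|S_j|$; multiplying over $j=1,\dots,d$ produces \eqref{K_Nestimate}, and all the estimates are uniform for $t\in[0,1]$ and in $x$ because $t$ enters only through the displacements $\varepsilon_j t - a_j/q_j$. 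The only genuinely delicate point is the one-dimensional Weyl bound with its \emph{explicit} dependence on the continuous displacement $|\varepsilon_j t - a_j/q_j|$ (not merely on the denominator $q_j$), together with the harmless removal of the cutoff $\varphi$; the coordinate-wise factorisation and the concluding arithmetic are routine.
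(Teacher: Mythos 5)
Your proof is correct and takes essentially the same route as the paper: both observe that $K_N^\pm$ tensorizes over coordinates and that the radial symmetry of $\varphi$ (via $k_j\mapsto -k_j$) makes the hyperbolic signs immaterial for $|K_N^\pm|$, which reduces the claim to the elliptic case. The only difference is that the paper then simply cites Bourgain's major-arc Weyl estimate \cite[Lemma 3.18]{B93} (see also \cite[Lemma 2.2]{KV14}) for the resulting one-dimensional bound, whereas you re-derive it; your absorption of the $\sqrt{q_j}(1+N^2|\beta_j|)^{1/2}$ error term using $q_j<N$ and $|\beta_j|\le (q_jN)^{-1}$ is sound.
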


For the elliptic case, i.e., when $\delta (j_0) = 0$, Lemma~\ref{LEM:disK} is proved in \cite[Lemma~3.18]{B93} using Weyl's method. 
See also \cite[Lemma 2.2]{KV14}.
The hyperbolic case of Lemma~\ref{LEM:disK} follows from the elliptic case via a symmetry argument, which we present below.

\begin{proof}
We note that
\begin{align}\label{Keq}
\begin{split}
|K_N^\pm (t,x)|&=\Bigg| \prod_{j=1}^{j_0} \sum_{k_j\in\mathbb{Z}}\varphi \bigg(\frac{k_j}N\bigg) e^{2\pi i[x_j k_j-t\eps_j k_j^2]} \prod_{j=j_0+1}^d \sum_{k_j \in\mathbb{Z}} \varphi \bigg(\frac{k_j}N\bigg) e^{2\pi i[x_jk_j+t\eps_j k_j^2]}\Bigg|\\
&=\Bigg| \prod_{j=1}^{j_0} \sum_{k_j\in\mathbb{Z}}\varphi \bigg(\frac{k_j}N\bigg) e^{- 2\pi i[x_j k_j + t\eps_j k_j^2]} \Bigg | \Bigg| \prod_{j=j_0+1}^d \sum_{k_j \in\mathbb{Z}} \varphi \bigg(\frac{k_j}N\bigg) e^{2\pi i[x_jk_j+t\eps_j k_j^2]}\Bigg|\\
&=\Bigg| \prod_{j=1}^{j_0} \sum_{k_j\in\mathbb{Z}}\varphi \bigg(\frac{k_j}N\bigg) e^{2\pi i[x_j k_j+ t\eps_j k_j^2]} \Bigg | \Bigg| \prod_{j=j_0+1}^d \sum_{k_j \in\mathbb{Z}} \varphi \bigg(\frac{k_j}N\bigg) e^{2\pi i[x_jk_j+t\eps_j k_j^2]}\Bigg|\\
&=\Bigg| \prod_{j=1}^{d} \sum_{k_j\in\mathbb{Z}}\varphi \bigg(\frac{k_j}N\bigg) e^{2\pi i[x_j k_j+ t\eps_j k_j^2]} \Bigg | = |K_N (t,x)|,
\end{split}
\end{align}
where we use the fact that $\varphi$ is radial in the third step.
Then, \eqref{K_Nestimate} follows from \cite[Lemma 3.18]{B93}.
We thus finish the proof.
\end{proof}

Following notations in \cite{KV14}, define the set
\begin{align}
\label{T}
\mathcal{T}:=\{t\in[0,1]: q_jN^2\big|\eps_j t-a_j/q_j\big|\leq N^{2\sigma}\,\,\text{for some}\,j,q_j\leq N^{2\sigma},\,\text{and}\,(a_j,q_j)=1\}   
\end{align}
on which $K_N(t,x)$ is large. So we define the large part of the convolution kernel by
\begin{align*}
\widetilde{K}_N^\pm (t,x):=\mathds{1}_{\mathcal{T}}(t)K_N^\pm (t,x).
\end{align*}
The following result follows from \eqref{Keq} and  \cite[Proposition 2.3]{KV14}.
The estimate \eqref{K_Nestimate} together with \eqref{T} gives that
\begin{align}\label{diffT}
    |K_N^\pm (t,x)-\widetilde{K}_N^\pm (t,x)|=|\mathds{1}_{[0,1]\setminus\mathcal{T}}(t)K_N^\pm (t,x)|\lesssim N^{n(1-\sigma)}.
\end{align}
Furthermore, by choosing $\s$ in \eqref{T} small enough, we have the following convolution estimate for the kernel $\wt K_N^\pm $.

\begin{proposition}\label{mainprop}
Assume $r>\frac{2(d+2)}{d}$. Then
\begin{align}\label{tildeKernel}
\Vert \widetilde{K}_N^\pm *F\Vert_{L^r_{t,x}([0,1]\times \mathbb{T}^d)}\lesssim N^{d-\frac{2(d+2)}{r}}\Vert F\Vert_{L^{r'}_{t,x}([0,1]\times \mathbb{T}^d)},
\end{align}
provided $\sigma $ in \eqref{T} is small enough (depending only on $n, p$).
\end{proposition}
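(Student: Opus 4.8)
The plan is to follow the Killip--Vi\c{s}an strategy \cite{KV14} adapted to the hyperbolic kernel. The key point is that the symmetry identity \eqref{Keq} shows $|\widetilde K_N^\pm(t,x)| = |\widetilde K_N(t,x)|$ pointwise, so the convolution operator with kernel $\widetilde K_N^\pm$ has exactly the same $L^{r'}\to L^r$ operator norm as the elliptic one with kernel $\widetilde K_N$ (the sign structure of $\Delta_\pm$ only affects the phase, which is invisible to absolute values, and $TT^*$-type bounds depend only on $|\widetilde K_N^\pm|$). Hence it suffices to reproduce the elliptic estimate, which is precisely \cite[Proposition 2.3]{KV14}.

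First I would record that, on the complement of $\mathcal T$, the pointwise bound \eqref{diffT} gives $\|(K_N^\pm - \widetilde K_N^\pm)*F\|_{L^r} \lesssim N^{d(1-\sigma)}\|F\|_{L^1} \lesssim N^{d(1-\sigma)}\|F\|_{L^{r'}}$ on the unit time interval, which is far better than the target exponent once $\sigma>0$; so the whole issue is the contribution of $t\in\mathcal T$. Next I would decompose $\mathcal T$ into the major arcs dictated by \eqref{T}: for each coordinate $j$ and each denominator $q_j \le N^{2\sigma}$ one has an interval of $t$'s of length $\sim q_j^{-1}N^{-2+2\sigma}$ around each rational $a_j/(\eps_j q_j)$, and on such an arc the dispersive estimate \eqref{K_Nestimate} of Lemma \ref{LEM:disK} controls $|K_N^\pm(t,x)|$ by the product $\prod_j N q_j^{-1/2}(1+N|\eps_j t - a_j/q_j|^{1/2})^{-1}$. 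One then estimates the convolution on each arc by Young's inequality in $x$ combined with the one-dimensional $L^{r'}_t \to L^r_t$ bound for convolution against $(1+N|\eps_j t-a_j/q_j|^{1/2})^{-1}$ localized to the arc — this is the standard Hardy--Littlewood--Sobolev input and is exactly where the hypothesis $r>\frac{2(d+2)}{d}$ enters, ensuring the relevant exponent is summable. Finally I would sum over the arcs: the number of rationals with denominator $\le N^{2\sigma}$ in each coordinate is $O(N^{4\sigma})$, and overlapping arcs are handled as in \cite{KV14}; choosing $\sigma$ small (depending only on $d,r$, as in the statement — the ``$n$'' there is $d$) makes all the arc-counting losses $N^{O(\sigma)}$ absorbable into the main power $N^{d-\frac{2(d+2)}{r}}$, which comes from the diagonal $q_j=1$ term.

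The main obstacle is the bookkeeping of the major-arc decomposition in $d$ coordinates simultaneously: one must verify that the product structure of the kernel in \eqref{Keq} lets the arc analysis be done coordinate-by-coordinate, that the multi-dimensional sum over $(a_j,q_j)_{j=1}^d$ converges with the claimed power, and that the near-diagonal vs.\ off-diagonal regimes of $t$ are correctly glued. However, since $|\widetilde K_N^\pm| = |\widetilde K_N|$ and the elliptic case is \cite[Proposition 2.3]{KV14}, none of this has to be redone: the hyperbolic statement is a direct corollary of the elliptic one via \eqref{Keq}, and the only genuinely new ingredient — the dispersive bound for $K_N^\pm$ — has already been transferred from the elliptic setting in Lemma \ref{LEM:disK}. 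So in the write-up I would simply invoke \eqref{Keq} together with \cite[Proposition 2.3]{KV14} and note that the choice of $\sigma$ is inherited verbatim.
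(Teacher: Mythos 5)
Your proposal is correct and takes essentially the same route as the paper: both reduce to \cite[Proposition 2.3]{KV14} via the kernel identity of \eqref{Keq}, though your framing through the pointwise equality $|\widetilde K_N^\pm| = |\widetilde K_N|$ together with $|\widetilde K_N^\pm * F| \le |\widetilde K_N| * |F|$ is arguably a cleaner way to say what the paper phrases as ``substitute Lemma \ref{LEM:disK} into the spatial convolution step of \cite[Proposition 2.3]{KV14}.'' One small caveat: your parenthetical claim that ``$TT^*$-type bounds depend only on $|\widetilde K_N^\pm|$'' is overstated as a general principle (since $TT^*$ arguments typically exploit phase cancellation), but the specific $L^{r'}\to L^r$ convolution bound being proved here does indeed depend only on $|\widetilde K_N^\pm|$ through Young's inequality and the major-arc decomposition, so the conclusion is unaffected.
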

\begin{proof}
The estimate \eqref{tildeKernel} follows from Lemma \ref{LEM:disK}, and the proof for \cite[Proposition 2.3]{KV14} which is divided into two crucial spatial (\cite[Lemma 2.4]{KV14}) and temporal convolution (\cite[Lemma 2.5]{KV14}) estimates. The only minor difference arises in the spatial convolution estimate where we make use of Lemma \ref{LEM:disK} instead, yet still, gives the same contribution.  
\end{proof}

With these preparations, we are ready to present the proof of Theorem \ref{THM:scale2}.
We remark that the $\varepsilon$-removal argument for periodic Strichartz estimates was introduced in \cite{B93}; see also \cite{OhWang,KV14}.
For completeness, we present the proof, following the argument in \cite{KV14} for the elliptic case.
The main difference in the hyperbolic case is that we use Theorem \ref{THM:Stri_T} with $\delta (j_0) \neq 0$, rather than $\delta (j_0) = 0$, as well as the observations in Lemma \ref{LEM:disK} and Proposition \ref{mainprop}.

\begin{proof}[Proof of Theorem \ref{THM:scale2}]
Normalize $\Vert f\Vert_{L^2(\mathbb{T}^d )}=1$ so that via Bernstein's inequality we get
\begin{align*}
    \Vert e^{it\Delta_{\pm}}P_{\leq N}f\Vert_{L^{\infty}_{t,x}([0,1]\times \mathbb{T}^d )}\leq CN^{\frac{d}{2}}
\end{align*}
for some $C>0$. Fix $p>p^*:=\frac{2(d+2-\delta (j_0))}{d-\delta (j_0)}$, then the inequality above allows us to write
\begin{align*}
 \Vert e^{it\Delta_{\pm}}P_{\leq N}f\Vert_{L^{p}_{t,x}([0,1]\times \mathbb{T}^d )}^p=\int_0^{CN^{\frac{d}{2}}}p\mu^{p-1}\big|\{(t,x)\in [0,1]\times\mathbb{T}^d :|(e^{it\Delta_{\pm}}P_{\leq N}f)(x)|>\mu\}\big|d\mu.\end{align*}
 For small $\delta>0$ to be specified later, we use Chebyshev's inequality and Theorem \ref{THM:Stri_T} with $p^*$ to get
 \begin{equation}\label{firstest}
\begin{aligned}
 \int_0^{N^{\frac{d}{2}-\delta}}&p\mu^{p-1}\big|\{(t,x)\in [0,1]\times\mathbb{T}^d :|(e^{it\Delta_{\pm}}P_{\leq N}f)(x)|>\mu\}\big|d\mu \\&\lesssim N^{p^*(\frac{d}{2}-\frac{n+2}{p^*}+\eps)}\int_0^{N^{\frac{d}{2}-\delta}}\mu^{p-p^*-1}d\mu\\&\lesssim N^{p(\frac{d}{2}-\frac{d+2}{p})+\eps p^*-\delta(p-p^*)}\lesssim N^{p(\frac{d}{2}-\frac{d+2}{p})}  
\end{aligned}
\end{equation}
as long as $\eps<\frac{\delta(p-p^*)}{p^*}$. For fixed $\mu>N^{\frac{d}{2}-\delta}$, we shall denote 
\begin{align*}
    \Omega=\{(t,x)\in [0,1]\times\mathbb{T}^d :|(e^{it\Delta_{\pm}}P_{\leq N}f)(x)|>\mu\}.
\end{align*} Hence, it is left to show that
\begin{align*}
I:= \int_{N^{\frac{d}{2}-\delta}}^{CN^{\frac{d}{2}}}p\mu^{p-1}\big|\Omega\big|d\mu \lesssim N^{p(\frac{d}{2}-\frac{d+2}{p})}, 
\end{align*}
which together with \eqref{firstest} gives the desired estimate \eqref{BDstrcest2}. For this purpose, we introduce the set \begin{align*}
    \Omega_{\omega}=\{(t,x)\in [0,1]\times\mathbb{T}^d :\text{Re}(e^{i\omega}e^{it\Delta_{\pm}}P_{\leq N}f)(x)>\frac{\mu}{2}\}
\end{align*}
that satisfies $|\Omega|\leq 4|\Omega_{\omega}|$ for some $\omega\in \{0, \frac{\pi}{2}, \pi, \frac{3\pi}{2}\}$. Therefore, it suffices to estimate $\Omega_{\omega}$. By Cauchy-Schwarz inequality, we have
\begin{equation}\label{Omega_omegaest}
\begin{aligned}
\mu^2|\Omega_{\omega}|^2&\lesssim \langle e^{it\Delta_{\pm}}P_{\leq N}f, \mathds{1}_{\Omega_{\omega}} \rangle_{L^2_{t,x}}^2\\&=\langle f, e^{-it\Delta_{\pm}}P_{\leq N}\mathds{1}_{\Omega_{\omega}} \rangle_{L^2_{t,x}}^2\\&\lesssim \langle \mathds{1}_{\Omega_{\omega}}, K_N^\pm*\mathds{1}_{\Omega_{\omega}}\rangle_{L^2_{t,x}}.
\end{aligned}
\end{equation}
We write $K_N^\pm=[K_N^\pm-\widetilde{K}_N^\pm]+\widetilde{K}_N^\pm$ to estimate the resulting bound in \eqref{Omega_omegaest}. To estimate the first term, we require the following lemma.

Applying Hölder's inequality, Young's inequality, and \eqref{diffT} we obtain the bound
\begin{equation}\label{K_N-K_Ntilde}
\begin{aligned}
 |\langle \mathds{1}_{\Omega_{\omega}},(K_N^\pm-\widetilde{K}_N^\pm)*\mathds{1}_{\Omega_{\omega}}\rangle_{L^2_{t,x}}|&\lesssim \Vert \mathds{1}_{\Omega_{\omega}}\Vert_{L^1_{t,x}}\Vert (K_N^\pm-\widetilde{K}_N^\pm)*\mathds{1}_{\Omega_{\omega}}\Vert_{L^{\infty}_{t,x}} \lesssim |\Omega_{\omega}|^2N^{d(1-\sigma)}. 
\end{aligned}
\end{equation} 
Now fix $r\in (p^*,p)$ and note that $p^*>\frac{2(d+2)}{d}$. Then, by choosing $\sigma$ sufficiently small, we apply Proposition \ref{mainprop} to obtain
\begin{equation}\label{K_Ntilde}
\begin{aligned}
 |\langle \mathds{1}_{\Omega_{\omega}},\widetilde{K}_N^\pm*\mathds{1}_{\Omega_{\omega}}\rangle_{L^2_{t,x}}|&\lesssim \Vert \mathds{1}_{\Omega_{\omega}}\Vert_{L^{r'}_{t,x}}\Vert \widetilde{K}_N^\pm*\mathds{1}_{\Omega_{\omega}}\Vert_{L^{r}_{t,x}} \lesssim |\Omega_{\omega}|^{\frac{2}{r'}}N^{d-\frac{2(d+2)}{r}}. 
\end{aligned}
\end{equation}
Combining \eqref{Omega_omegaest}, \eqref{K_N-K_Ntilde}, \eqref{K_Ntilde}, and choosing $\delta\ll \frac{d\sigma}{2}$, we see that
\begin{align*}
  \mu^2|\Omega_{\omega}|^2&\lesssim |\Omega_{\omega}|^{\frac{2}{r'}}N^{d-\frac{2(d+2)}{r}}
\end{align*}
since the contribution coming from \eqref{K_N-K_Ntilde} is much smaller than $\mu^2|\Omega_{\omega}|^2$ under the assumption $\delta\ll \frac{d\sigma}{2}$ and $\mu>N^{\frac{d}{2}-\delta}$. Therefore, for $r\in (p^*,p)$, this implies that
\begin{equation*}
\begin{aligned}
I&\leq 4\int_{N^{\frac{d}{2}-\delta}}^{CN^{\frac{d}{2}}}p\mu^{p-1}\big|\Omega_{\omega}\big|d\mu&\lesssim N^{\frac{r}{2}(d-\frac{2(d+2)}{r})}\int_{N^{\frac{d}{2}-\delta}}^{CN^{\frac{d}{2}}}\mu^{p-r-1}d\mu\lesssim N^{p(\frac{d}{2}-\frac{d+2}{p})},
\end{aligned}
\end{equation*}
which together with \ref{firstest} finishes the proof.
\end{proof}

\subsection{Hyperbolic Galilean boost}

We denote by $\mathscr{C}_N$ the collection of cubes $C\subset \mathbb{Z}^d$ of side-length $N\geq 1$ with arbitrary center and orientation.
By a hyperbolic type Galilean transformation, Theorem \ref{THM:scale2} leads to the following consequence.

\begin{lemma}\label{LEM:Stri_Ga} 
Let $0< T \leq 1$. For all $C\in \mathscr{C}_N$ and admissible pair $(d,p)$, we have
\begin{equation}\label{StricartzwithC}
\begin{aligned}
  \Vert P_Ce^{it\Delta_{\pm}}\phi\Vert_{L^p([0,T]\times \mathbb{T}^d )} \lesssim N^{\frac{d}{2}-\frac{d+2}{p}}\Vert P_C\phi\Vert_{L^2(\mathbb{T}^d )}. 
\end{aligned}
\end{equation}
\end{lemma}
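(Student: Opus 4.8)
The plan is to derive Lemma~\ref{LEM:Stri_Ga} from Theorem~\ref{THM:scale2} by a hyperbolic Galilean transformation that translates the Fourier support of $P_C\phi$ so that its center is moved to the origin. Fix $C\in\mathscr{C}_N$; if $C\cap\mathbb{Z}^d=\emptyset$ there is nothing to prove, so choose a reference lattice point $k_*\in C\cap\mathbb{Z}^d$. Since $C$ is a cube of side-length $N$, every $k\in C$ obeys $|k-k_*|\leq\sqrt d\,N$, so the shifted set $C-k_*$ lies in the axis-parallel box $[-\sqrt d\,N,\sqrt d\,N]^d$; fix a dyadic integer $\widetilde N$ with $\sqrt d\,N\leq\widetilde N\lesssim_d N$.

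Next I would introduce the modulated datum $\psi:=e^{-2\pi i k_*\cdot x}P_C\phi$, which is a function on $\mathbb{T}^d$ with $\widehat\psi$ supported in $C-k_*$; in particular $P_{\leq\widetilde N}\psi=\psi$ and $\|\psi\|_{L^2(\mathbb{T}^d)}=\|P_C\phi\|_{L^2(\mathbb{T}^d)}$. Writing $k=k_*+\ell$ in the series \eqref{linearSol} and using the algebraic identity $|k_*+\ell|_\pm^2=|k_*|_\pm^2+2\,Q_\pm(k_*,\ell)+|\ell|_\pm^2$, where $Q_\pm(k_*,\ell)=\sum_{j\leq j_0}\eps_j k_{*,j}\ell_j-\sum_{j>j_0}\eps_j k_{*,j}\ell_j=b\cdot\ell$ with $b_j:=\eps_j k_{*,j}$ for $j\leq j_0$ and $b_j:=-\eps_j k_{*,j}$ for $j>j_0$, one arrives at the pointwise identity
\[
\big(e^{it\Delta_\pm}P_C\phi\big)(x)=e^{2\pi i(x\cdot k_*-t|k_*|_\pm^2)}\,\big(e^{it\Delta_\pm}\psi\big)(x-2tb).
\]
Taking moduli removes the unimodular prefactor, and for each fixed $t$ the map $x\mapsto x-2tb$ is a measure-preserving translation of $\mathbb{T}^d$; hence for any $T\leq1$,
\[
\|P_C e^{it\Delta_\pm}\phi\|_{L^p([0,T]\times\mathbb{T}^d)}=\|e^{it\Delta_\pm}\psi\|_{L^p([0,T]\times\mathbb{T}^d)}\leq\|e^{it\Delta_\pm}\psi\|_{L^p([0,1]\times\mathbb{T}^d)}.
\]

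Finally I would apply Theorem~\ref{THM:scale2} to $\psi=P_{\leq\widetilde N}\psi$ — admissible here because the hypothesis $p>\frac{2(d+2-\delta(j_0))}{d-\delta(j_0)}$ in Definition~\ref{DEF:adm} is exactly the hypothesis of that theorem — to get
\[
\|e^{it\Delta_\pm}\psi\|_{L^p([0,1]\times\mathbb{T}^d)}\lesssim\widetilde N^{\frac d2-\frac{d+2}{p}}\|\psi\|_{L^2(\mathbb{T}^d)}\lesssim_d N^{\frac d2-\frac{d+2}{p}}\|P_C\phi\|_{L^2(\mathbb{T}^d)},
\]
which is \eqref{StricartzwithC}. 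The argument is essentially routine; the only points that need a moment's care are that the Galilean identity is insensitive to the indefinite signature of $|\cdot|_\pm$ (it is purely algebraic, with $b$ the $\Delta_\pm$-dual momentum of $k_*$), and that the \emph{arbitrary orientation} of the cubes in $\mathscr{C}_N$ only inflates the frequency scale from $N$ to $\widetilde N\sim_d N$, costing a harmless dimensional constant. I therefore do not anticipate any genuine obstacle.
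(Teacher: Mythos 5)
Your proof takes essentially the same route as the paper: a hyperbolic Galilean transformation to shift the Fourier support of $P_C\phi$ to the origin, followed by an application of Theorem~\ref{THM:scale2}. You are in fact slightly more careful on two technical points than the paper's own writeup—shifting by a lattice point $k_*\in C\cap\Z^d$ (so that the modulated datum $\psi=e^{-2\pi i k_*\cdot x}P_C\phi$ is genuinely $\Z^d$-periodic) rather than by the possibly non-integral cube center $r$, and explicitly inflating the frequency scale to a dyadic $\widetilde N\sim_d N$ to absorb the arbitrary orientation of $C$—but neither changes the substance of the argument.
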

\begin{proof}
Let $C\in \mathscr{C}_N$ be the cube with center $r = (r_{1},\cdots, r_d) \in \R^d$, and 
\begin{align}\label{Heps}
 H_{\eps}(k) = - \sum_{j=1}^{j_0} \eps_j k_j^2 + \sum_{j=j_0+1}^d \eps_j k_j^2,
\end{align}
where $j_0 \in \{1, \cdots, d-1\}$ and $\eps = (\eps_1,\cdots,\eps_d) \in \R^d_+$.
Let us denote $C':=C-r\in \mathscr{C}_N$ by the cube with center at the origin.
 To shift the center of the frequency localization, we make use of the following hyperbolic type Galilean transform (see also \cite[Section 3]{YW13_2})
\begin{align*}
    x\cdot k + t H_{\eps}(k) = x\cdot r + t H_{\eps} (r)+(x+2t\overline{\eps r}) \cdot (k-r)+tH_{\eps}(k-r)
    \end{align*}
where 
$$
\overline{\eps  r}=(-\eps _1r_{1},...,-\eps_{j_0} r_{j_0}, \eps _{j_0+1}r_{j_0+1},...,\eps_{d}r_{ d}).
$$ 
Therefore, by setting $\phi_0(x):=e^{-ix\cdot r}\phi(x)$, we obtain  $P_{C}\phi\,(x)=e^{ix\cdot r}P_{C_0}\phi_0\,(x)$ and
\begin{equation*}\label{e^itexpress}
\begin{aligned}
   e^{it\Delta_{\pm}} P_{C}\phi\,(t,x) & =  \sum_{k \in C} e^{2\pi i[x\cdot k+t H_{\eps }(k)]} \widehat{\phi}(k)\\
   &=e^{2\pi i(x\cdot r+tH_{\eps }(r))}\sum_{k \in C}e^{2\pi i[(x+2t\overline{\eps  r})\cdot (k-r)+tH_{\eps }(k-r)]} \widehat{\phi}(k)\\&=e^{2\pi i(x\cdot r+tH_{\eps }(r))}\sum_{k \in C_0}e^{2\pi i[(x+2t\overline{\eps  r})\cdot k)+tH_{\eps }(k)]} \widehat{\phi_0}(k)\\&=e^{2\pi i(x\cdot r+tH_{\eps }(r))}e^{it\Delta_{\pm}}P_{C_0}\phi_0(t, x+2t\overline{\eps  r}).
    \end{aligned}      
    \end{equation*}
As a result, the estimate \eqref{StricartzwithC} follows from Theorem \ref{THM:scale2}.
\end{proof}

In view of Lemma \ref{LEM:Stri_Y},
we have the following corollary of Lemma \ref{LEM:Stri_Ga}.

\begin{corollary}\label{COR:Stri_GY}
For all $C\in \mathscr{C}_N$ and admissible pair $(d,p)$, we have
\begin{equation}
\label{StricartzwithCcubes}
\begin{aligned}
  \Vert P_C u\Vert_{L^p([0,T)\times \mathbb{T}^d )} \lesssim N^{\frac{d}{2}-\frac{d+2}{p}}\Vert P_C u\Vert_{U^p_{\Delta_{\pm}}L^2}\lesssim N^{\frac{d}{2}-\frac{d+2}{p}}\Vert P_C u\Vert_{Y^0([0,T))}. 
\end{aligned}
\end{equation}
\end{corollary}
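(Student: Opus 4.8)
The plan is to deduce Corollary~\ref{COR:Stri_GY} from Lemma~\ref{LEM:Stri_Ga} (the Strichartz estimate for the free evolution on a frequency cube) by the same transference-to-atomic-spaces argument already invoked in the proof of Lemma~\ref{LEM:Stri_Y}. The point is that the left-hand side of~\eqref{StricartzwithCcubes} is a space-time norm of $P_C u$, and $P_C$ commutes with $e^{it\Delta_\pm}$, so the estimate is purely a statement about the linear flow dressed up in $U^p_{\Delta_\pm}L^2$ language.

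First I would recall the atomic structure of $U^p$: any $u$ with $\|P_C u\|_{U^p_{\Delta_\pm}L^2}<\infty$ can be written, up to $\varepsilon$, as $e^{it\Delta_\pm}$ applied to a sum $\sum_j \lambda_j a_j$ of $U^p$-atoms with $\sum_j|\lambda_j|\le (1+\varepsilon)\|P_C u\|_{U^p_{\Delta_\pm}L^2}$. Each atom is a finite sum $\sum_k \mathds{1}_{[t_{k-1},t_k)}\phi_{k-1}$, so $e^{it\Delta_\pm}a_j = \sum_k \mathds{1}_{[t_{k-1},t_k)} e^{it\Delta_\pm}\phi_{k-1}$. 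Since the time intervals in a single atom are disjoint, the $L^p_{t,x}$ norm over $[0,T)\times\mathbb{T}^d$ of $e^{it\Delta_\pm}a_j$ is controlled by $\big(\sum_k \|e^{it\Delta_\pm}\phi_{k-1}\|_{L^p([0,T)\times\mathbb{T}^d)}^p\big)^{1/p}$, and applying Lemma~\ref{LEM:Stri_Ga} to each $\phi_{k-1}$ (which is still frequency-localized to $C$ after the projection) gives $\lesssim N^{\frac d2-\frac{d+2}{p}}\big(\sum_k\|\phi_{k-1}\|_{L^2}^p\big)^{1/p}\le N^{\frac d2-\frac{d+2}{p}}$ by the normalization $\sum_k\|\phi_{k-1}\|_{L^2}^p=1$ together with $p>2$. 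Summing over $j$ with the triangle inequality in $L^p_{t,x}$ and the $\ell^1$ bound on $\{\lambda_j\}$, then letting $\varepsilon\to0$, yields the first inequality in~\eqref{StricartzwithCcubes}. This is exactly the mechanism of \cite[Proposition~2.19]{HHK09} and \cite[Corollary~3.2]{HTT11}, which we may cite. The second inequality in~\eqref{StricartzwithCcubes} is then immediate from the embeddings~\eqref{embeddingUV} and~\eqref{U^2trXsembed}, since $P_C$ commutes with the flow and hence with the norms defining $Y^0$, so $\|P_C u\|_{U^p_{\Delta_\pm}L^2}\lesssim\|P_C u\|_{U^2_{\Delta_\pm}L^2}\lesssim\|P_C u\|_{Y^0([0,T))}$.

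There is no real obstacle here; the only point that requires a word of care is that Lemma~\ref{LEM:Stri_Ga} is stated for $e^{it\Delta_\pm}\phi$ with $\phi = P_C\phi$, and one must check that the projection $P_C$ applied to the ``slices'' $\phi_{k-1}$ of an atom of $P_C u$ is harmless — but this holds because $P_C$ is an orthogonal projection, $\|P_C\phi_{k-1}\|_{L^2}\le\|\phi_{k-1}\|_{L^2}$, and $P_C u$ being in the atomic space lets us assume the atoms are themselves frequency-localized to $C$ (apply $P_C$ to the whole atomic decomposition). Thus the proof is a one-line invocation:

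\begin{proof}[Proof of Corollary~\ref{COR:Stri_GY}]
The first estimate in~\eqref{StricartzwithCcubes} follows from~\eqref{StricartzwithC} by the atomic structure of $U^p$, exactly as in the proof of Lemma~\ref{LEM:Stri_Y}; see \cite[Proposition~2.19]{HHK09} and \cite[Corollary~3.2]{HTT11}. The second estimate follows from the embeddings~\eqref{embeddingUV} and~\eqref{U^2trXsembed}, noting that $P_C$ commutes with $e^{it\Delta_\pm}$ and that $p>2$ by~\eqref{admissible}.
\end{proof}
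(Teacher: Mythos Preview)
Your proposal is correct and follows essentially the same route as the paper, which simply says the proof ``follows the same lines as the proof of Lemma~\ref{LEM:Stri_Y}, replacing Theorem~\ref{THM:scale2} with Lemma~\ref{LEM:Stri_Ga}'' and omits the details. One small slip in your preamble: the chain $\|P_C u\|_{U^p_{\Delta_\pm}L^2}\lesssim\|P_C u\|_{U^2_{\Delta_\pm}L^2}\lesssim\|P_C u\|_{Y^0}$ has the second inequality backwards (the embedding~\eqref{U^2trXsembed} gives $U^2_{\Delta_\pm}L^2\hookrightarrow Y^0$, not the reverse); the correct intermediate space is $V^2_{rc,\Delta_\pm}L^2$, which is exactly what your final boxed proof invokes via~\eqref{embeddingUV} and~\eqref{U^2trXsembed}.
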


The proof of Corollary \ref{COR:Stri_GY} follows the same lines as the proof of Lemma \ref{LEM:Stri_Y}, replacing Theorem \ref{THM:scale2} with Lemma \ref{LEM:Stri_Ga}. We omit the details.

\section{Nonlinear estimates}\label{SectNonlinear}
In this section, based on the critical function spaces theory discussed in Section \ref{preliminaries}, we prove nonlinear estimates associated with \eqref{eq:NLS}, which will then be utilized in order to establish Theorem \ref{THM:main}. 
\subsection{Estimate for subcritical spaces}
The following estimate is a crucial ingredient for the well-posedness of the three-dimensional cubic HNLS equation \eqref{eq:NLS} in the subcritical regime.
In the following, we present the proof of Theorem \ref{THM:main0} only for the case $d = 3$,
since the arguments for the other cases are similar.
\begin{lemma}\label{PROP:T2m}
 Fix $s> s_c (3,1)=\frac{1}{2}$. Let $j_0\in \{1,2\}$ and $0<T\leq 1$. Then we have
\begin{align}\label{subcrest}
\Big|\int_0^T\int_{\mathbb{T}^3}\widetilde{u}_1\widetilde{u}_2\widetilde{u}_3\overline{v}\,dxdt\Big|\lesssim \Vert v\Vert_{Y^{-s}}\Vert u_1\Vert_{Y^{s}}\Vert u_2\Vert_{Y^{s}}\Vert u_3\Vert_{Y^{s}}
\end{align}
where $\widetilde{u}_j\in \{u_j, \overline{u}_j\}$.
\end{lemma}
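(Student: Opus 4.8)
The plan is to reduce \eqref{subcrest} by a Littlewood--Paley decomposition to a sum of dyadic blocks, and to bound each block by a \emph{bilinear} Strichartz estimate extracted from the cube-localised estimate of Corollary~\ref{COR:Stri_GY}. Write $u_j=\sum_{N_j}P_{N_j}u_j$ and $v=\sum_{N_0}P_{N_0}v$ over dyadic $N_0,N_j$. The $x$-integral in \eqref{subcrest} vanishes unless $k_0=\pm k_1\pm k_2\pm k_3$, the signs being dictated by which $\widetilde u_j$ are conjugated; this is immaterial below, since conjugation only reflects a Fourier support. Hence, by the symmetry in $u_1,u_2,u_3$, I may assume $N_1\ge N_2\ge N_3$, so that $N_0\lesssim N_1$ and $N_1\lesssim\max(N_0,N_2)$. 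Two regimes remain: \textbf{(A)} $N_1\sim N_2$; and \textbf{(B)} $N_2\ll N_1$, which forces $N_0\sim N_1$. The pieces involving $P_1$ are handled by the same argument with all dyadic parameters $O(1)$.

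The preliminary step is the bilinear bound: for dyadic $M\le N$,
\begin{equation}\label{eq:plan-bilin}
\big\|P_N\widetilde u\cdot P_M\widetilde w\big\|_{L^2_{t,x}([0,T)\times\T^3)}\lesssim M^{\frac12+\eps}\,\|P_Nu\|_{Y^0}\,\|P_Mw\|_{Y^0}.
\end{equation}
To prove it I would decompose $P_Nu=\sum_Q P_Qu$ over the cubes $Q\in\mathscr{C}_M$ that meet the frequency support of $P_Nu$. Each product $P_Q\widetilde u\cdot P_M\widetilde w$ has spatial Fourier support in a fixed dilate of $Q$, so these products are almost orthogonal in $L^2_x$ uniformly in $t$, whence
\[
\big\|P_N\widetilde u\cdot P_M\widetilde w\big\|_{L^2_{t,x}}^2\lesssim\sum_Q\big\|P_Q\widetilde u\cdot P_M\widetilde w\big\|_{L^2_{t,x}}^2\le\|P_M\widetilde w\|_{L^4_{t,x}}^2\sum_Q\|P_Q\widetilde u\|_{L^4_{t,x}}^2 .
\]
The cube-localised $L^4$ Strichartz estimate on $\T^3$ — which holds up to an $M^{\eps}$ loss, harmless in the subcritical range, by the argument of Corollary~\ref{COR:Stri_GY} with Theorem~\ref{THM:Stri_T} in place of Theorem~\ref{THM:scale2} — gives $\|P_Q\widetilde u\|_{L^4}\lesssim M^{\frac14+\eps}\|P_Qu\|_{Y^0}$ and $\|P_M\widetilde w\|_{L^4}\lesssim M^{\frac14+\eps}\|P_Mw\|_{Y^0}$; since the $Y^0$-norm is $\ell^2$ in the frequency variable, $\sum_Q\|P_Qu\|_{Y^0}^2=\|P_Nu\|_{Y^0}^2$, and \eqref{eq:plan-bilin} follows.

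Given \eqref{eq:plan-bilin}, I would split a dyadic block of \eqref{subcrest} by Cauchy--Schwarz in $L^2_{t,x}$, always pairing each of the two highest-frequency factors with one of the two lowest: in case~(A),
\[
\Big|\int_0^T\!\!\int_{\T^3}\widetilde u_1\widetilde u_2\widetilde u_3\overline v\,dx\,dt\Big|\le\big\|P_{N_1}\widetilde u_1\,P_{N_3}\widetilde u_3\big\|_{L^2_{t,x}}\big\|P_{N_2}\widetilde u_2\,\overline{P_{N_0}v}\big\|_{L^2_{t,x}},
\]
and in case~(B) the same with the factors grouped as $(\widetilde u_1\widetilde u_2)$ and $(\overline v\,\widetilde u_3)$. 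In every factor the frequencies are ordered, so \eqref{eq:plan-bilin} applies and produces a power of the \emph{smaller} frequency only. Using $\|P_{N_j}u_j\|_{Y^0}\sim N_j^{-s}\|P_{N_j}u_j\|_{Y^s}$ and $\|P_{N_0}v\|_{Y^0}\sim N_0^{\,s}\|P_{N_0}v\|_{Y^{-s}}$, the decisive cancellation is that each of the (at most two) high-frequency factors contributes $N_1^{-s}$, while in case~(B) the high-frequency factor $v$ contributes $N_0^{\,s}\sim N_1^{\,s}$; hence the net power of $N_1$ is at most $N_1^{1/2-s+\eps}$. Taking $\eps<s-\tfrac12$ (possible since $s>s_c(3,1)=\tfrac12$), the geometric sums over $N_0,N_2,N_3\le N_1$ converge, and a concluding Cauchy--Schwarz over the remaining dyadic sums replaces $\{\|P_Nu_j\|_{Y^s}\}$ and $\{\|P_Nv\|_{Y^{-s}}\}$ by $\|u_j\|_{Y^s}$ and $\|v\|_{Y^{-s}}$, which is \eqref{subcrest}.

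The heart of the matter is \eqref{eq:plan-bilin}. A naive H\"older estimate $\|P_{N_1}\widetilde u_1\,\overline{P_{N_0}v}\|_{L^2}\le\|P_{N_1}\widetilde u_1\|_{L^4}\|P_{N_0}v\|_{L^4}$ carries the non-summable factor $N_1^{1/2}$ in the high-output regime $N_0\sim N_1$, whereas the right-hand side of \eqref{eq:plan-bilin} sees only the smaller frequency; combined with pairing high frequencies against low ones, this is exactly what lets the outermost dyadic sum close down to the critical regularity. Consequently the only genuinely delicate points are the almost-orthogonality of the cube-localised products and the transfer of the cube Strichartz estimate (with its $\eps$-loss in the hyperbolic case) to the $Y^0$-level; the rest is bookkeeping.
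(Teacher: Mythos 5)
Your proposal is correct and follows essentially the same route as the paper: the same bilinear $L^2$ estimate with loss $N_2^{1/2+\eps}$ (the paper states it and cites \cite[Lemma 3.1]{KV14} for the proof, using exactly the cube decomposition and the $L^4$ cube-localised Strichartz estimate with an $N^{1/4+\delta}$ loss from Theorem~\ref{THM:Stri_T} that you describe), the same two frequency regimes, and the same Cauchy--Schwarz pairings of high with low frequencies before summing dyadically in $Y^s$. The only differences are cosmetic (your $\eps$ in \eqref{eq:plan-bilin} absorbs a factor of two compared with the cube estimate, and your summary of the net $N_1$-power is slightly imprecise in regime~(B), where it is actually zero, but your computation closes either way).
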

\medskip
To prove Lemma \ref{PROP:T2m}, we need the following bilinear Strichartz estimate.
\begin{lemma}
    Let $T \le 1$. Then for any $\eps>0$ and every $1 \le N_2 \le N_1$ we have
    \begin{align}
        \label{bilinear}
        \| P_{N_1} u_1 P_{N_2} u_2\|_{L^2_{t,x} ([0,T)\times \T^3)} \les N_2^{\frac{1}{2}+\eps} \| u_1\|_{Y^0 ([0,T))} \| u_2\|_{Y^0 ([0,T))}.
    \end{align}
   \end{lemma}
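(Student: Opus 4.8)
The plan is to prove the bilinear estimate \eqref{bilinear} by first reducing it via the transfer principle (the atomic structure of $U^p$, as in \cite[Proposition~2.19]{HHK09} and \cite[Corollary~3.2]{HTT11}) to the corresponding estimate for free solutions: it suffices to show
\[
\| e^{it\Delta_{\pm}} P_{N_1}\phi_1 \, e^{it\Delta_{\pm}} P_{N_2}\phi_2 \|_{L^2_{t,x}([0,T)\times\T^3)} \lesssim N_2^{\frac12+\eps} \|\phi_1\|_{L^2}\|\phi_2\|_{L^2},
\]
after which the $Y^0$-bound follows from the embedding $Y^0 \hookrightarrow U^p_{\Delta_\pm}L^2$ (for $p$ slightly larger than $2$) together with $U^2 \hookrightarrow U^p$ and summing the atoms. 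Actually, since we only want an $N_2^{1/2+\eps}$ loss rather than a sharp bound, an even softer route is available: interpolate, or simply bound $\|P_{N_1}u_1 P_{N_2}u_2\|_{L^2_{t,x}} \le \|P_{N_1}u_1\|_{L^p_{t,x}} \|P_{N_2}u_2\|_{L^{p'_\ast}_{t,x}}$ — but in three dimensions with an admissible exponent this is lossy, so the genuine bilinear improvement is needed.

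The core of the argument is the free bilinear estimate, which I would establish by a Córdoba–Bourgain-type almost-orthogonality/counting argument adapted to the hyperbolic phase $H_\varepsilon(k) = -\sum_{j\le j_0}\varepsilon_j k_j^2 + \sum_{j>j_0}\varepsilon_j k_j^2$. Writing the product $e^{it\Delta_{\pm}}P_{N_1}\phi_1\, e^{it\Delta_{\pm}}P_{N_2}\phi_2$ in Fourier series, $L^2_{t,x}$ on $[0,T)\times\T^3$ (after harmlessly extending $T$ to a unit interval and inserting a smooth time cutoff, or passing to $\R$) is controlled by a sum over pairs $(k,\ell)\in\mathrm{supp}\,\widehat{P_{N_1}\phi_1}\times\mathrm{supp}\,\widehat{P_{N_2}\phi_2}$ with fixed sum $k+\ell=m$ and near-fixed modulation $H_\varepsilon(k)+H_\varepsilon(\ell)=n + O(1)$. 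The key geometric fact is that for fixed $m$, the level set $\{\ell : H_\varepsilon(k)+H_\varepsilon(\ell) = n + O(1),\ |k|\sim N_1\}$, where $k = m-\ell$, is the intersection of the cube $|\ell|\lesssim N_2$ with a neighborhood of a quadric; since $H_\varepsilon(m-\ell)+H_\varepsilon(\ell) = H_\varepsilon(m) - 2\sum_j \pm\varepsilon_j m_j \ell_j + 2H_\varepsilon(\ell)$ depends on $\ell$ as a nondegenerate quadratic, a standard divisor/lattice-point count (or a dyadic pigeonholing plus the one-dimensional count $\#\{a\in\Z : a^2 = b+O(1), |a|\le N\}\lesssim 1$ applied coordinatewise, summing a geometric-type series) gives at most $\lesssim N_2^{1+\eps}$ lattice points. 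Cauchy–Schwarz in this count then yields the factor $N_2^{1/2+\eps}$, and orthogonality in $(m,n)$ closes the estimate; the $\varepsilon$-loss absorbs the logarithmic divisor bound and the minor non-sharpness coming from the indefinite signature. One can alternatively deduce the bilinear bound directly from the $\ell^2$-decoupling estimates of \cite{BD17} for the hyperbolic paraboloid, which is the route most consistent with the rest of the paper, applying decoupling at scale $N_2$ after a Galilean boost (Lemma~\ref{LEM:Stri_Ga}) that centers the $N_1$-frequency block.

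The main obstacle is the indefinite (hyperbolic) signature of $H_\varepsilon$: in the elliptic case the relevant quadric is an ellipsoid with bounded-diameter level sets, so the lattice count is immediate, whereas here the level sets of $H_\varepsilon(\ell) + (\text{linear in }\ell)$ are hyperboloids/cylinders that are noncompact, and one must genuinely exploit the restriction $|\ell|\lesssim N_2$ to keep the count finite — equivalently, one must check that the divisor-bound argument is insensitive to signs (which it is, since $\{a^2 - b^2 = c\}$ and $\{a^2+b^2=c\}$ both have $O_\eps(c^\eps)$ solutions in a box). A secondary technical point is the interval-truncation to $[0,T)$ with $T\le 1$: this is handled by the standard time-localization for $U^p/V^p$ spaces (restriction is bounded on these spaces), so it costs nothing. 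Once the free bilinear estimate is in hand, transferring to $U^p$ and then to $Y^0$ via \eqref{U^2trXsembed} is routine, exactly as in the proof of Lemma~\ref{LEM:Stri_Y}.
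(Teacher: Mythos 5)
The paper does not carry out the lattice-point count you propose; instead it follows \cite[Lemma~3.1]{KV14}: boost the $N_1$-block into $N_2$-cubes $C\in\mathscr{C}_{N_2}$ via the hyperbolic Galilean transform, apply H\"older as $\|P_C P_{N_1}u_1\,P_{N_2}u_2\|_{L^2_{t,x}}\le\|P_C P_{N_1}u_1\|_{L^4_{t,x}}\|P_{N_2}u_2\|_{L^4_{t,x}}$, invoke the $L^4$ Strichartz estimate from Theorem~\ref{THM:Stri_T} (at scale $N_2$, with the unavoidable $N_2^\delta$ loss since $p^*=4$ is exactly the endpoint for $d=3$, $\delta(j_0)=1$), transfer it to $U^4_{\Delta_\pm}$/$Y^0$ as in Corollary~\ref{COR:Stri_GY}, and sum in $C$ by almost orthogonality. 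Each $L^4$ factor contributes $N_2^{1/4+\delta}$, which gives $N_2^{1/2+\eps}$. So what you flag as your ``alternative'' route (Galilean boost plus the decoupling-based $L^4$ bound) is in fact the paper's route; and your early dismissal of the plain H\"older bound as ``lossy'' is premature --- with the cube decomposition in place, the $L^4\times L^4$ H\"older is precisely the mechanism, and nothing beyond Theorem~\ref{THM:Stri_T} is needed.

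Your primary route --- reducing to free solutions and counting lattice points on level sets of the quadratic $\ell\mapsto H_\varepsilon(m-\ell)+H_\varepsilon(\ell)$ --- is a legitimate alternative at the heuristic level, but the count as you describe it is not right. ``Applying the one-dimensional count $\#\{a:a^2=b+O(1)\}\lesssim 1$ coordinatewise'' in three variables fixes two coordinates and determines the third to $O(1)$, which yields $N_2^2$, not the required $N_2^{1+\eps}$. The correct count for $d=3$, $\delta(j_0)=1$ fixes the single coordinate carrying the opposite sign and then applies the Gauss-circle/divisor bound $r_2(n)\lesssim_\eps n^\eps$ to the remaining elliptic two-dimensional equation, producing $\lesssim N_2\cdot N_2^\eps$ after summing. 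Your parenthetical that $\{a^2-b^2=c\}$ also has $O_\eps(c^\eps)$ solutions in a box is only true for $c\neq0$; the degenerate hyperbola $a^2=b^2$ has $O(N_2)$ points, which is exactly the kind of noncompactness that makes the indefinite case delicate. In $d=3$ the circle-count route avoids this, but the point should be made explicitly rather than waved away. With that correction the counting argument does close (and would also carry the $N_2^\eps$ loss from the divisor bound), after which the transfer-principle step you describe via $Y^0\hookrightarrow U^p_{\Delta_\pm}L^2$ is routine and matches the paper's use of the atomic structure in Lemma~\ref{LEM:Stri_Y} and Corollary~\ref{COR:Stri_GY}.
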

The proof of \eqref{bilinear} is similar to \cite[Lemma 3.1]{KV14}, therefore we omit. However, the factor $N_2^{\frac{1}{2}+\eps}$ in \eqref{bilinear} comes from the non-scale-invariant Strichartz estimates of Theorem \ref{THM:Stri_T}. Indeed, in view of \eqref{d(j_0)}, we see that $d(1)=d(2)=1$, which then allows us to take advantage of the $L^4_{t,x}$ Strichartz inequality resulting from Theorem \ref{THM:Stri_T}:
\begin{align*}
\Vert e^{it\Delta_{\pm}}P_{C}\phi\Vert_{L^4_{t,x}([0,1]\times\mathbb{T}^3)}\lesssim N^{\frac{1}{4}+\delta}\Vert P_{C}\phi\Vert_{L^2(\mathbb{T}^3 )},
\end{align*}
for each $\delta>0$ and all $C\in \mathscr{C}_N$. Therefore, transferring the above estimate to an estimate basically of the form \eqref{StricartzwithCcubes} (in this case with loss $N^{\frac{1}{4}+\delta}$) and using the cube decomposition as usual give rise to the bilinear estimate \eqref{bilinear}.
\begin{proof}[Proof of Lemma \ref{PROP:T2m}]
By symmetry, it suffices to assume $N_1 \ge N_2 \ge N_3$. Then it follows that
\begin{align}
\begin{split}
I:=\Big|\int_0^T\int_{\mathbb{T}^3}\widetilde{u}_1\widetilde{u}_2\widetilde{u}_3\overline{v}\,dxdt\Big| & \les \sum_{\substack{N_0,N_1,N_2,N_3\\
N_1 \ge N_2 \ge N_3}} \|P_{N_0}vP_{N_1}u_{1} P_{N_2}u_{2} P_{N_3}u_{3} \|_{L^1_{t,x}}.
\end{split}
\label{termI}
\end{align}
Given $\eps>0$, set $s=\frac{1}{2}+2\eps$. Let us first consider the case when $N_0\gg N_2$. Then we have $N_0 \sim N_1$. From \eqref{termI} and \eqref{bilinear}, we obtain 
\begin{align}
\begin{split}
I & \les \sum_{\substack{N_0,N_1,N_2,N_3\\
N_0 \sim N_1 \gg N_2 \ge N_3}} \|P_{N_1}u_{1} P_{N_2}u_{2}\|_{L^2_{t,x}  ([0,T)\times \T^3)} \| P_{N_3}u_{3} P_{N_0}v\|_{L^2_{t,x}  ([0,T)\times \T^3)} \\
& \les \sum_{\substack{N_0,N_1,N_2,N_3\\
N_0 \sim N_1 \gg N_2 \ge N_3}} (N_2 N_3)^{\frac{1}{2}+\eps}  \| P_{N_0}v\|_{Y^0}\prod_{j=1}^3 \| P_{N_j}u_{j}\|_{Y^0 } \\
& \les \sum_{\substack{N_0,N_1,N_2,N_3\\
N_0 \sim N_1 \gg N_2 \ge N_3}} (N_2 N_3)^{-\eps}  \| P_{N_0}v\|_{Y^{-s} }  \prod_{j=1}^3 \| P_{N_j}u_{j}\|_{Y^{s} }  \\
& \les \Big( \sum_{N_0 \sim N_1} \|  P_{N_0}v\|_{Y^{-s} }  \|  P_{N_1}u_{1}\|_{Y^{s} } \Big) \prod_{j=2}^3 \Big( \sum_{N_j} N_j^{-\eps} \| P_{N_j}u_{j}\|_{Y^{s}} \Big)  \\
& \les \|v\|_{Y^{-s}}\bigg( \prod_{j = 1}^3 \|P_{N_j}u_j\|_{Y^{s}} \bigg).  
\end{split}
\label{termI1}
\end{align}
Next, let us assume $N_0 \les N_2$, which implies $N_1 \sim N_2$. Thus similarly as above we get
\begin{align}
\begin{split}
I & \les \sum_{\substack{N_0,N_1,N_2,N_3\\
N_1 \sim N_2 \ges N_0, N_3}} \|P_{N_1}u_{1} P_{N_3}u_{3}\|_{L^2_{t,x}} \|P_{N_0} v P_{N_2}u_{2}\|_{L^2_{t,x}} \\
& \les \sum_{\substack{N_0,N_1,N_2,N_3\\
N_1 \sim N_2 \ges N_0, N_3}} (N_0 N_3)^{\frac{1}{2}+\eps}  \| P_{N_0}v\|_{Y^0}\prod_{j=1}^3 \| P_{N_j}u_{j}\|_{Y^0} \\
& \les \sum_{\substack{N_0,N_1,N_2,N_3\\
N_1 \sim N_2 \ges N_0, N_3}} (N_1 N_2)^{-s} N_3^{-\eps} N_0^{2s-\eps}   \Big( \prod_{j=1}^3 \| P_{N_j}u_{j}\|_{Y^{s} } \Big) \|P_{N_0}v\|_{Y^{-s} }\\
& \les \sum_{\substack{N_0,N_1,N_2,N_3\\
N_1 \sim N_2 \ges N_0, N_3}}  (N_0N_3)^{-\eps}  \Big( \prod_{j=1}^3 \| P_{N_j}u_{j}\|_{Y^{s} } \Big) \| P_{N_0}v\|_{Y^{-s}} \\
& \les \Big( \sum_{N_1 \sim N_2} \| P_{N_1}u_{1}\|_{Y^{s}}  \| P_{N_2}u_{2}\|_{Y^{-s} } \Big)\Big(\sum_{N_3} N_3^{-\eps} \| P_{N_3}u_{3}\|_{Y^{s}} \Big) \Big(\sum_{N_0} N_0^{-\eps} \| P_{N_0}v\|_{Y^{-s}} \Big)   \\
& \les \|v\|_{Y^{-s}}\bigg( \prod_{j = 1}^3 \|u_j\|_{Y^{s}} \bigg).
\end{split}
\label{termI2}
\end{align}

\noi 
Thus by combining \eqref{termI1} and \eqref{termI2} the estimate \eqref{subcrest} follows.
\end{proof}

\subsection{Estimates for the critical spaces}
We start with the following estimate that covers numerous cases.
\begin{lemma}\label{lemma1}
 Assume $d=2$, $m\geq 4$; $d\geq 3$, $m\geq 2$, and $s\geq s_c (d,m)$, $0<T\leq 1$. Let $j_0\in \{1,...,d-1\}$, then we have
\begin{align*}
\Big|\int_0^T\int_{\mathbb{T}^d }\prod_{j=1}^{2m+1}\widetilde{u}_j\overline{v}\,dxdt\Big|\lesssim \Vert v\Vert_{Y^{-s}}\Vert u_1\Vert_{Y^{s}}\prod_{j=2}^{2m+1}\Vert u_j\Vert_{Y^{s_c (d,m)}}
\end{align*}
where $\widetilde{u}_j\in \{u_j, \overline{u}_j\}$.
\end{lemma}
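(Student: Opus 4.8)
The plan is to run a Littlewood--Paley decomposition on all $2m+2$ factors and reduce everything to the cube-localized Strichartz estimate of Corollary~\ref{COR:Stri_GY} together with the bilinear estimate. Write $v = \sum_{N_0} P_{N_0} v$ and $u_j = \sum_{N_j} P_{N_j} u_j$, and by symmetry among $u_2,\dots,u_{2m+1}$ order their frequencies as $N_2 \ge N_3 \ge \dots \ge N_{2m+1}$. For the multilinear term to be nonzero we need the top two frequencies comparable; the relevant competitors are $N_0$, $N_1$, and $N_2$, so split into the cases $N_0 \sim \max \gtrsim N_1, N_2$, then $N_1 \sim \max$, then $N_2 \sim \max$. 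In each regime I would pair up the two largest-frequency factors and estimate that pair in $L^2_{t,x}$ using the bilinear estimate (which on $\T^d$ for $d\ge 2$ reads $\|P_{N}u\, P_{M}w\|_{L^2_{t,x}} \lesssim M^{\frac{d-1}{2}+\eps}\|u\|_{Y^0}\|w\|_{Y^0}$ for $M \le N$, coming from Theorem~\ref{THM:Stri_T} exactly as in the proof of Lemma~\ref{PROP:T2m}), and estimate each of the remaining $2m$ factors in $L^{2m}_{t,x}$ via H\"older plus Corollary~\ref{COR:Stri_GY}.

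The point of choosing the admissibility exponent is precisely that $2m \ge p^\ast$ in every listed case, so Corollary~\ref{COR:Stri_GY} applies at $L^{2m}$ and gives, after summing a cube decomposition over $\mathscr{C}_{N_j}$,
\begin{align*}
\|P_{N_j} u_j\|_{L^{2m}_{t,x}([0,T)\times \T^d)} \lesssim N_j^{\frac{d}{2}-\frac{d+2}{2m}} \|P_{N_j} u_j\|_{Y^0} = N_j^{s_c(d,m)-\frac{2}{2m(m+1)}\cdot m \cdot (\cdots)}\cdots,
\end{align*}
— more simply, $N_j^{\frac{d}{2}-\frac{d+2}{2m}} = N_j^{s_c(d,m) - \eta}$ for some $\eta = \eta(d,m) > 0$ when $2m > p^\ast$ (strict inequality holds in all listed cases, and one checks the borderline $d=2,m=4$ and $d=4,m=1$ separately using the chosen admissible $p$). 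Thus each low factor contributes $N_j^{s_c}$ with a negative power of $N_j$ to spare; the top two factors absorb the $M^{\frac{d-1}{2}+\eps}$ from the bilinear estimate, and since $\frac{d-1}{2} < s_c(d,m)$ exactly when $m \ge 2$ (and, for $d=2$, when $m\ge 4$ — this is where those thresholds come from), writing the loss against one $Y^s$ and one $Y^{s_c}$ (or $Y^{-s}$ for the $v$ factor) leaves a negative power of the second-largest frequency. The frequency comparability in each case then lets one sum the geometric series in all of $N_0,\dots,N_{2m+1}$, pairing the two comparable largest frequencies into a single Cauchy--Schwarz sum $\sum_{N \sim M} \|P_N w_1\|\,\|P_M w_2\| \lesssim \|w_1\|_{Y^\alpha}\|w_2\|_{Y^\beta}$ with $\alpha+\beta$ matching the regularities, exactly as in~\eqref{termI1}--\eqref{termI2}.

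The main obstacle is bookkeeping the regularity budget at the borderline cases. One must verify that in the worst frequency configuration — typically $N_0 \lesssim N_2 \sim N_1$ with $N_0$ possibly small — the sum of exponents assigned to $P_{N_0}v$, $P_{N_1}u_1$ and the remaining factors is consistent: the $v$-factor gets $Y^{-s}$ with $s = s_c + 2\eps$, $u_1$ gets $Y^{s}$, and $u_2,\dots,u_{2m+1}$ get $Y^{s_c}$, so one needs the net power of $N_0$ to be nonpositive when $N_0$ is the smallest, which forces a constraint like $2s_c - (d-1)/2 \ge 0$ or similar — precisely satisfied under the dimensional hypotheses. I would handle $d=2,m=4$ and $d=4,m=1,\delta(j_0)\ne 2$ as the delicate endpoints: for $d=2,m=4$ one has $s_c = 3/4$ and $(d-1)/2 = 1/2$, leaving room $1/4$; for $d=4,m=1$ one uses the admissible pair $(4,p)$ with $p > 10/3$ rather than~\eqref{worstcase}, which is why $\delta(j_0)=2$ must be excluded (there $p^\ast = 2(d+2-\delta)/(d-\delta) = 4 = 2m$ fails strictness). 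In all other cases the exponent gaps are strictly positive and the argument is a routine variant of Lemma~\ref{PROP:T2m}.
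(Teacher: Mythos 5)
Your plan departs from the paper's proof, and as written it does not close. The paper does not use a bilinear estimate here at all: it Cauchy--Schwarzes the $(2m+2)$-linear integrand into two $(m+1)$-factor $L^2$ groups, cube-decomposes the single highest-frequency factor in each group to exploit almost orthogonality, and applies a three-exponent H\"older $(L^p, L^{p_{d,m}}, L^q)$ inside each group, with $p_{d,m}=m(d+2)$, $p$ slightly below $p_{d,m}$, and $q$ slightly above, arranged so that the deficit in the $N_j$-power budget coming from the $L^p$ factors exactly cancels the excess from the $L^q$ factor and produces a decaying geometric factor $(N_{\text{low}}/N_{\text{high}})^\delta$. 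The side conditions on $p$ in \eqref{psatisfy} (which enforce both admissibility and the sign of $\delta$) are precisely what produce the thresholds $m\ge4$ for $d=2$ and $m\ge2$ for $d\ge3$; this balancing is essential because the naive single-exponent bulk estimate gives only the neutral power $N_j^{s_c}$ with nothing left to sum.

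Several concrete problems with your version. \textbf{(i) H\"older count.} If one pair goes in $L^2_{t,x}$, the remaining $2m$ factors must be placed in $L^{4m}$, not $L^{2m}$: $\tfrac12+\tfrac{2m}{2m}\ne1$. \textbf{(ii) Bilinear constant.} The constant $M^{\frac{d-1}{2}+\eps}$ you quote is the Euclidean one; a cube decomposition at scale $M$ combined with the hyperbolic $L^4$ Strichartz of Theorem~\ref{THM:Stri_T} (which is where you say it comes from) gives $M^{\frac{d-2}{2}+\eps}$ for $d\ge3$ and $M^{\frac12+\eps}$ for $d=2$. Moreover, even with your constant, $\tfrac{d-1}{2}<s_c(d,m)$ reduces to $\tfrac1m<\tfrac12$, i.e. $m>2$ \emph{independently of $d$}; this is not ``$m\ge2$'', and it does not reproduce the $m\ge4$ threshold at $d=2$, so the claimed ``this is where the thresholds come from'' is an arithmetic non sequitur. \textbf{(iii) Pairing.} The two largest frequencies are always comparable in this convolution estimate (this is the constraint $N_0\sim N_1$ or $N_1\sim N_2$), so applying a bilinear estimate to the two largest gives no gain -- it degenerates to $L^4\times L^4$. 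The gain appears only when you pair a high with a low, as the paper does in the trilinear Lemma~\ref{PROP:T2m}. \textbf{(iv) The second high factor.} Even after fixing (i)--(iii) you still carry an untreated high-frequency factor ($P_{N_1}u_1$, say). Estimating it in $L^{4m}$ costs $N_1^{\frac d2-\frac{d+2}{4m}}$, a positive power of the largest frequency, and the dyadic sum blows up. To repair this you must cube-decompose that factor at the scale of the next dyadic block and Cauchy--Schwarz its contribution, which is precisely the paper's two-group structure; the ``one bilinear pair plus bulk $L^{4m}$'' shortcut does not give you the almost-orthogonality you need for that factor. \textbf{(v) Scope.} The cases $d=4$, $m=1$, $\delta(j_0)\ne2$ that you treat as a ``delicate endpoint'' of this lemma do not belong to Lemma~\ref{lemma1} at all; they are handled by the separate Lemma~\ref{lemma3}, which uses a different admissible exponent $p>10/3$.

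In short, the right skeleton (dyadic decomposition, cube decomposition with almost orthogonality, symmetry reduction to two frequency configurations) is present in your sketch, but the mechanism that makes the dyadic sums converge -- the paper's deliberate choice of a slightly off-critical Strichartz exponent $p$ paired with a compensating $q$ -- is absent, and the bilinear substitute you propose is both misquoted and misapplied.
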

\begin{proof}
 Let $p$ satisfy the following  
\begin{equation}\label{psatisfy}
\begin{aligned}
6<p<\frac{16m}{2m+2}\quad \text{for}\,\,d=2,\,\,m\geq 4,\\
 2+\frac{8}{d}<p<\frac{4m(d+2)}{dm+2} \quad \text{for}\,\,d\geq 4\,\, \text{even} ,\,\,m\geq 2,\\ 
  2+\frac{8}{d+1}<p<\frac{4m(d+2)}{dm+2} \quad \text{for}\,\,d\geq 3\,\, \text{odd},\,\,m\geq 2,
\end{aligned}
\end{equation}
where the lower bounds are due to \eqref{worstcase} so that each $(d,p)$ is admissible and the upper bounds we choose ensure that
\begin{align}\label{s_nm<0}
  d-\frac{2(d+2)}{p}-s_c (d,m)<0  
\end{align}
which will be used to give the desired result. We write $p_{d,m}:=m(d+2)$ to denote the exponent  associated to critical scaling exponent, that is, $s_c (d,m)=\frac{d}{2}-\frac{d+2}{p_{d,m}}$, so that via Lemma \ref{LEM:Stri_Y} one has
\begin{equation}\label{pnmsnm}
\begin{aligned}
  \Vert P_Nu\Vert_{L^{p_{d,m}}([0,T]\times \mathbb{T}^d )} \lesssim N^{s_c (d,m)}\Vert P_Nu\Vert_{Y^0}. 
\end{aligned}
\end{equation}
In what follows, we employ H\"older's inequality with exponents $p$ satisfying \eqref{psatisfy}, $p_{d,m}$ as introduced above, and $q$ for $(d,q)$ being admissible such that
\begin{align}\label{pp_nmq2}
    \frac{2}{p}+\frac{m-2}{p_{d,m}}+\frac{1}{q}=\frac{1}{2}.
\end{align}
The admissibility of $(d,q)$ is verified by using \eqref{pp_nmq2} and the upper bounds in \eqref{psatisfy}, indeed, 
\begin{equation*}
\begin{aligned}
q>p_{d,m}=md+2m> \frac{2(d+2-\delta (j_0))}{d-\delta (j_0)}   
\end{aligned}
\end{equation*}
for any $j_0\in\{1,...,d-1\}$. Also, in view of \eqref{pp_nmq2} and \eqref{s_nm<0}, we observe that 
\begin{align}\label{necessarydelta}
  \frac{d}{2}-\frac{d+2}{q}-s_c (d,m)=-d+\frac{2(d+2)}{p}+s_c (d,m)>0.
\end{align}
Let $\delta:=-d+\frac{2(d+2)}{p}+s_c (d,m)$. By symmetry, it suffices to consider the following two cases.
\vspace{0.12cm}
\noindent 
\\{\bf Case A.} $N_0\sim N_1\geq N_2\geq \cdots\geq N_{2m+1}$.  \vspace{0.12cm}
\\
Consider partitions of $\mathbb{Z}^d$ into cubes $C_{k_2}\in\mathscr{C}_{N_2}$ and $C_{k_3}\in\mathscr{C}_{N_3}$ respectively, then 
$$P_{C_{k_2}}P_{N_0}v\prod_{j=1}^{m}P_{N_{2j}}u_{2j}\quad \text{and}\,\,\, P_{C_{k_3}}P_{N_{1}}u_{1}\prod_{j=1}^{m}P_{N_{2j+1}}u_{2j+1}$$ are almost orthogonal in $L^2_x$
because there are a bounded number of cubes of side length $\sim N_2$ and of length $\sim N_3$ which cover the spatial Fourier support of $P_{N_2}u_2\prod_{j=2}^{m}P_{N_{2j}}u_{2j}$ and $P_{N_3}u_3\prod_{j=2}^{m}P_{N_{2j+1}}u_{2j+1}$ respectively. Then by almost orthogonality, H\"older's inequality with exponents in \eqref{pp_nmq2}, the estimates \eqref{StricartzwithN}, \eqref{StricartzwithCcubes}, \eqref{pnmsnm}, and the identity \eqref{necessarydelta}, we obtain
\begin{align*}
&\Big|\int_0^T\int_{\mathbb{T}^d }\prod_{j=1}^{2m+1}\widetilde{u}_j\overline{v}\,dxdt\Big|\lesssim \sum_{N_0\sim N_1\geq \cdots \geq N_{2m+1}} \Big\Vert P_{N_0}v\prod_{j=1}^{2m+1}P_{N_j}u_{j}\Big\Vert_{L^1_{t,x}}\\&\lesssim \sum_{N_0\sim N_1\geq \cdots \geq N_{2m+1}} \Big\Vert P_{N_0}v\prod_{j=1}^{m}P_{N_{2j}}u_{2j} \Big\Vert_{L^2_{t,x}}\Big\Vert \prod_{j=0}^{m}P_{N_{2j+1}}u_{2j+1} \Big\Vert_{L^2_{t,x}}\\&\sim \sum_{N_0\sim N_1\geq \cdots \geq N_{2m+1}} \Bigg(\sum_{k_2\in\mathbb{Z}}\Big\Vert P_{C_{k_2}}P_{N_0}v\prod_{j=1}^{m}P_{N_{2j}}u_{2j} \Big\Vert_{L^2_{t,x}}^2\Bigg)^{\frac{1}{2}}\\&\quad\Bigg(\sum_{k_3\in\mathbb{Z}}\Big\Vert P_{C_{k_3}}P_{N_{1}}u_{1}\prod_{j=1}^{m}P_{N_{2j+1}}u_{2j+1}  \Big\Vert_{L^2_{t,x}}^2\Bigg)^{\frac{1}{2}}\\&\lesssim \sum_{N_0\sim N_1\geq \cdots \geq N_{2m+1}}\Big(\sum_{k_2\in\mathbb{Z}}\Vert P_{C_{k_2}}P_{N_0}v\Vert_{L^p_{t,x}}^2\Big)^{\frac{1}{2}}\Vert P_{N_2}u_2\Vert_{L^p_{t,x}}\prod_{j=2}^{m-1}\Vert P_{N_{2j}}u_{2j}\Vert_{L^{p_{d,m}}_{t,x}}\Vert P_{N_{2m}}u_{2m}\Vert_{L^q_{t,x}}\\&\quad \Big(\sum_{k_3\in\mathbb{Z}}\Vert P_{C_{k_3}}P_{N_1}u_1\Vert_{L^p_{t,x}}^2\Big)^{\frac{1}{2}}\Vert P_{N_3}u_3\Vert_{L^p_{t,x}}\prod_{j=2}^{m-1}\Vert P_{N_{2j+1}}u_{2j+1}\Vert_{L^{p_{d,m}}_{t,x}}\Vert P_{N_{2m+1}}u_{2m+1}\Vert_{L^q_{t,x}}\\&\lesssim \sum_{N_0\sim N_1\geq \cdots \geq N_{2m+1}}  N_2^{d-\frac{2(d
    +2)}{p}-s_c (d,m)}N_3^{d-\frac{2(d
    +2)}{p}-s_c (d,m)}N_{2m}^{\frac{d}{2}-\frac{d
    +2}{q}-s_c (d,m)}N_{2m+1}^{\frac{d}{2}-\frac{d
    +2}{q}-s_c (d,m)}\\&\quad\Big(\sum_{k_2\in\mathbb{Z}}\Vert P_{C_{k_2}}P_{N_0}v\Vert_{Y^0}^2\Big)^{\frac{1}{2}} \Big(\sum_{k_3\in\mathbb{Z}}\Vert P_{C_{k_3}}P_{N_1}u_1\Vert_{Y^0}^2\Big)^{\frac{1}{2}}\prod_{j=2}^{2m+1}\Vert P_{N_{j}}u_{j}\Vert_{Y^{s_c (d,m)}} \\&\sim \sum_{N_0\sim N_1\geq \cdots \geq N_{2m+1}} \Big(\frac{N_{2m}}{N_2}\Big)^{\delta}\Big(\frac{N_{2m+1}}{N_3}\Big)^{\delta}\Vert P_{N_0}v\Vert_{Y^0}\Vert P_{N_1}u_1\Vert_{Y^0}\prod_{j=2}^{2m+1}\Vert P_{N_{j}}u_{j}\Vert_{Y^{s_c (d,m)}}\\&\lesssim \sum_{\substack{N_0, N_1\\ N_0\sim N_1}}\Big(\frac{N_{0}}{N_1}\Big)^{s}\Vert P_{N_0}v\Vert_{Y^{-s}}\Vert P_{N_1}u_1\Vert_{Y^s}\sum_{\substack{N_2, N_3\\ N_2\geq N_3}}\Big(\frac{N_{3}} {N_2}\Big)^{\frac{\delta}{2m}}\Vert P_{N_2}u_2\Vert_{Y^{s_c (d,m)}}\Vert P_{N_3}u_3\Vert_{Y^{s_c (d,m)}} \cdots \\&\quad \sum_{\substack{N_{2m}, N_{2m+1}\\ N_{2m}\geq N_{2m+1}}}\Big(\frac{N_{2m+1}}{N_{2m}}\Big)^{\frac{\delta}{2m}}\Vert P_{N_{2m}}u_{2m}\Vert_{Y^{s_c (d,m)}}\Vert P_{N_{2m+1}}u_{2m+1}\Vert_{Y^{s_c (d,m)}} \\&\lesssim \Big(\sum_{N_0}\Vert P_{N_0}v\Vert_{Y^{-s}}^2\Big)^{\frac{1}{2}}\Big(\sum_{N_1}\Vert P_{N_1}u_1\Vert_{Y^{s}}^2\Big)^{\frac{1}{2}}\Big(\sum_{\substack{N_2, N_3\\ N_2\geq N_3}}\Big(\frac{N_{3}} {N_2}\Big)^{\frac{\delta}{2m}}\Vert P_{N_2}u_2\Vert_{Y^{s_c (d,m)}}^2\Big)^{\frac{1}{2}}\\&\quad\Big(\sum_{\substack{N_2, N_3\\ N_2\geq N_3}}\Big(\frac{N_{3}} {N_2}\Big)^{\frac{\delta}{2m}}\Vert P_{N_3}u_3\Vert_{Y^{s_c (d,m)}}^2\Big)^{\frac{1}{2}} \cdots  \Big(\sum_{\substack{N_{2m}, N_{2m+1}\\ N_{2m}\geq N_{2m+1}}}\Big(\frac{N_{2m+1}} {N_{2m}}\Big)^{\frac{\delta}{2m}}\Vert P_{N_{2m}}u_{2m}\Vert_{Y^{s_c (d,m)}}^2\Big)^{\frac{1}{2}}\\&\quad\Big(\sum_{\substack{N_{2m}, N_{2m+1}\\ N_{2m}\geq N_{2m+1}}}\Big(\frac{N_{2m+1}} {N_{2m}}\Big)^{\frac{\delta}{2m}}\Vert P_{N_{2m+1}}u_{2m+1}\Vert_{Y^{s_c (d,m)}}^2\Big)^{\frac{1}{2}}\\&\lesssim \Vert v\Vert_{Y^{-s}}\Vert u_1\Vert_{Y^{s}}\prod_{j=2}^{2m+1}\Vert u_j\Vert_{Y^{s_c (d,m)}}.
\end{align*}
\noindent
\\{\bf Case B.} $N_1\sim N_2\geq N_0\geq N_3\geq  \cdots\geq N_{2m+1}$. 
\vspace{0.13cm}
\\
Similary as in previous case, by cube decomposition, H\"older inequality, Strichartz inequalities \eqref{StricartzwithN}, \eqref{StricartzwithCcubes}, \eqref{pnmsnm} we end up with
\begin{align*}
&\Big|\int_0^T\int_{\mathbb{T}^d }\prod_{j=1}^{2m+1}\widetilde{u}_j\overline{v}\,dxdt\Big|\\&\lesssim \sum_{N_1\sim N_2\geq N_0\geq \cdots \geq N_{2m+1}} \Big\Vert \prod_{j=0}^{m}P_{N_{2j+1}}u_{2j+1} \Big\Vert_{L^2_{t,x}}\Big\Vert P_{N_0}v\prod_{j=1}^{m}P_{N_{2j}}u_{2j} \Big\Vert_{L^2_{t,x}}\\&\lesssim \sum_{N_1\sim N_2\geq N_0\geq \cdots \geq N_{2m+1}} \Big(\frac{N_{2m}}{N_0}\Big)^{\delta}\Big(\frac{N_{2m+1}}{N_3}\Big)^{\delta}N_0^{s+s_c (d,m)}N_1^{-s-s_c (d,m)}\Vert P_{N_{0}}v \Vert_{Y^{-s}}\Vert P_{N_1}u_1\Vert_{Y^{s}}\\&\quad\Vert P_{N_2}u_2\Vert_{Y^{s_c (d,m)}}\prod_{j=3}^{2m+1}\Vert P_{N_{j}}u_{j} \Vert_{Y^{s_c (d,m)}}\\&\lesssim \sum_{\substack{N_1,N_2\\N_1\sim N_2}}\Vert P_{N_1}u_1\Vert_{Y^s}\Vert P_{N_2}u_2\Vert_{Y^{s_c (d,m)}}\sum_{N_0\geq N_3\geq  \cdots \geq N_{2m+1}}\Big(\frac{N_{2m}}{N_0}\Big)^{\delta}\Big(\frac{N_{2m+1}}{N_3}\Big)^{\delta}\\
&\qquad \times  \Vert P_{N_{0}}v \Vert_{Y^{-s}}\prod_{j=3}^{2m+1}\Vert P_{N_{j}}u_{j} \Vert_{Y^{s_c (d,m)}}\\
&\lesssim \Vert v\Vert_{Y^{-s}}\Vert u_1\Vert_{Y^{s}}\prod_{j=2}^{2m+1}\Vert u_j\Vert_{Y^{s_c (d,m)}}.
\end{align*}
\end{proof}
Next, we address the trilinear estimates when $d\geq 5$:
\begin{lemma}\label{lemma2}
 Assume $d\geq 5$, $s\geq s_c (d,1)$, $0<T\leq 1$ and $j_0\in \{1,2,...,d-1\}$. Then we have
\begin{align*}
\Big|\int_0^T\int_{\mathbb{T}^d }\widetilde{u}_1\widetilde{u}_2\widetilde{u}_3\overline{v}\,dxdt\Big|\lesssim \Vert v\Vert_{Y^{-s}}\Vert u_1\Vert_{Y^{s}}\Vert u_2\Vert_{Y^{s_c (d,1)}}\Vert u_3\Vert_{Y^{s_c (d,1)}}
\end{align*}
where $\widetilde{u}_j\in \{u_j, \overline{u}_j\}$.
\end{lemma}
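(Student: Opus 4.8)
The plan is to follow the scheme used for the cubic nonlinearity in the proof of Lemma~\ref{PROP:T2m} --- dyadic decomposition, reduction to two frequency configurations, and splitting the space--time integral into a product of two $L^2_{t,x}$ norms, each pairing a high-frequency factor with a low-frequency one --- but now at the critical regularity, which forces us to replace the $\varepsilon$-lossy bilinear estimate \eqref{bilinear} by a sharper, scale-invariant one that is available in dimension $d\ge 5$. Thus I would first establish, for $d\ge 5$, $j_0\in\{1,\dots,d-1\}$, and $1\le N_2\le N_1$, the bilinear Strichartz estimate
\begin{align*}
 \|P_{N_1}u_1\,P_{N_2}u_2\|_{L^2_{t,x}([0,T)\times\T^d)}\lesssim\Big(\frac{N_2^{d-1}}{N_1}+1\Big)^{\frac12}\|u_1\|_{Y^0([0,T))}\|u_2\|_{Y^0([0,T))},
\end{align*}
together with the same bound after replacing either factor by its complex conjugate. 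As in the proof of \eqref{bilinear} (cf.\ \cite[Lemma~3.1]{KV14}) one reduces, via the atomic structure of $U^2$, to free evolutions $e^{it\Delta_{\pm}}\phi_i$ and then to a lattice-point count: completing the square in the quadratic symbol $H_\varepsilon$ confines the resonant set to an $O(1)$-neighbourhood of a level set of $H_\varepsilon$ (an indefinite quadric) --- or of an affine hyperplane, for the conjugated products --- intersected with the Euclidean shells $\{|\xi_i|\sim N_i\}$, which is a cap of diameter $\sim N_2$ on a hypersurface of scale $\sim N_1$. I expect this counting step to be the main obstacle, and it is also the reason for the hypothesis $d\ge 5$: only for quadratic forms in at least five variables does one have $\varepsilon$-loss-free asymptotics for lattice points on such quadrics (and on their caps), which is precisely what produces the scale-invariant factor $(N_2^{d-1}/N_1+1)^{1/2}$ and makes the ensuing nonlinear estimate critical rather than subcritical as in Lemma~\ref{PROP:T2m}; keeping this uniform over rational and irrational tori adds a further technical layer, handled within the $V^2$-framework.

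Granting the bilinear estimate, decompose $v,u_1,u_2,u_3$ into Littlewood--Paley pieces at scales $N_0,N_1,N_2,N_3$ and reduce, exactly as in Lemma~\ref{lemma1}, to the two configurations \textbf{Case A}: $N_0\sim N_1\ge N_2\ge N_3$, and \textbf{Case B}: $N_1\sim N_2\ge N_0\ge N_3$. In Case~A one writes
\begin{align*}
 \Big|\int_0^T\int_{\T^d}\widetilde{u}_1\widetilde{u}_2\widetilde{u}_3\overline{v}\,dx\,dt\Big|\lesssim\sum_{N_0\sim N_1\ge N_2\ge N_3}\|P_{N_0}\overline{v}\,P_{N_2}\widetilde{u}_2\|_{L^2_{t,x}}\,\|P_{N_1}\widetilde{u}_1\,P_{N_3}\widetilde{u}_3\|_{L^2_{t,x}},
\end{align*}
and in Case~B the analogous bound with the pairing $\|P_{N_1}\widetilde{u}_1\,P_{N_3}\widetilde{u}_3\|_{L^2_{t,x}}\,\|P_{N_2}\widetilde{u}_2\,P_{N_0}\overline{v}\|_{L^2_{t,x}}$; in each factor a high frequency is paired with a low one, and the bilinear estimate applies.

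It remains to sum the dyadic series. Substituting $\|P_{N_0}v\|_{Y^0}\sim N_0^{\,s}\|P_{N_0}v\|_{Y^{-s}}$, $\|P_{N_1}u_1\|_{Y^0}\sim N_1^{-s}\|P_{N_1}u_1\|_{Y^s}$, and $\|P_{N_j}u_j\|_{Y^0}\sim N_j^{-s_c}\|P_{N_j}u_j\|_{Y^{s_c}}$ for $j=2,3$, and using $s_c=s_c(d,1)=\tfrac{d-2}{2}$, the bilinear factor attached to a low-frequency leaf $P_{N_j}w$ combines with the weight $N_j^{-s_c}$ to yield $\big(N_j^{d-1}/N_{\mathrm{hi}}+1\big)^{1/2}N_j^{-s_c}\lesssim(N_j/N_{\mathrm{hi}})^{1/2}+N_j^{-s_c}$, where $N_{\mathrm{hi}}$ is the frequency of its partner. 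Since $d\ge5$ forces $s_c\ge\tfrac32>0$, every resulting monomial carries either a positive power of a ratio $N_{\mathrm{lo}}/N_{\mathrm{hi}}$ or a strictly negative power of a frequency, so Cauchy--Schwarz in the dyadic indices --- together with $\sum_{N_0\sim N_1}\|P_{N_0}v\|_{Y^{-s}}\|P_{N_1}u_1\|_{Y^s}\lesssim\|v\|_{Y^{-s}}\|u_1\|_{Y^s}$ --- closes the estimate. In Case~B a few of these monomials are only borderline summable; there one exploits the diagonal constraint $N_1\sim N_2$ to absorb the positive powers of $N_1$ created by summing over the $v$-frequency $N_0\le N_1$ against the gains coming from the $(P_{N_2}u_2,P_{N_0}v)$ pairing. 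This establishes the estimate for $s=s_c(d,1)$, and the same computation --- in which $s$ appears only through the canceling weights $N_0^{\,s}$ on $v$ and $N_1^{-s}$ on $u_1$ --- gives it for all $s\ge s_c(d,1)$.
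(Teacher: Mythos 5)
Your approach is genuinely different from the paper's: you reduce the quadrilinear form to a product of two bilinear $L^2_{t,x}$ norms and invoke a bilinear Strichartz estimate, whereas the paper treats the quadrilinear integral directly by a four-way H\"older split (three factors in $L^{\frac{3(d+2)}{d+1+\varepsilon}}_{t,x}$ after cube decomposition, one in $L^{\frac{d+2}{1-\varepsilon}}_{t,x}$), each controlled by the scale-invariant Strichartz estimates of Theorem~\ref{THM:scale2} and Corollary~\ref{COR:Stri_GY}. Given a correct bilinear estimate your dyadic bookkeeping in Cases A and B does close, so the framework is not unsound. However, there is a genuine gap: the bilinear estimate
\[
\|P_{N_1}u_1\,P_{N_2}u_2\|_{L^2_{t,x}}\lesssim\Big(\tfrac{N_2^{d-1}}{N_1}+1\Big)^{1/2}\|u_1\|_{Y^0}\|u_2\|_{Y^0}
\]
is asserted, not proved, and it is \emph{stronger} than the bilinear estimates actually available in this circle of ideas. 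The torus analogue proved in \cite{KV14} (Lemma~3.1 there) has the form $N_2^{\frac{d-2}{2}}\big(\tfrac{N_2}{N_1}+\tfrac1{N_2}\big)^{\delta}$ with a small $\delta>0$, and for $N_2\ll N_1$ your claimed bound $(N_2^{d-1}/N_1)^{1/2}$ beats that by a positive power of $N_2/N_1$; you cannot simply import the Euclidean Bourgain bilinear gain onto $\mathbb{T}^d$. The proof sketch you offer does not repair this: your appeal to ``$\varepsilon$-loss-free asymptotics for lattice points on quadrics in $\ge 5$ variables'' is a number-theoretic heuristic tied to rational coefficients, while the hypotheses allow arbitrary $\varepsilon_j\in(0,1]$ (irrational tori), where such circle-method asymptotics are unavailable and where \cite{KV14}-style bilinear gains are instead extracted from $L^p$ Strichartz after Galilean boost and cube decomposition.

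Relatedly, your diagnosis of \emph{why} $d\geq 5$ is needed is off. In the paper the restriction $d\geq 5$ is exactly the admissibility condition for the exponent $p=\tfrac{3(d+2)}{d+1+\varepsilon}$: one needs $p>\tfrac{2(d+2-\delta(j_0))}{d-\delta(j_0)}$ uniformly in $j_0\in\{1,\dots,d-1\}$, and the worst case $\delta(j_0)=\lfloor d/2\rfloor$ forces $d\geq 5$ (already $d=4$, $\delta(j_0)=2$ fails, which is why that case is excluded from Theorem~\ref{THM:main}). This has nothing to do with counting lattice points on quadrics; it is a statement about which $L^p$ Strichartz exponents are scale-invariant for the hyperbolic propagator. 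To turn your outline into a proof you would need to either (i) establish the $\delta$-gain bilinear estimate $N_2^{\frac{d-2}{2}}\big(\tfrac{N_2}{N_1}+\tfrac1{N_2}\big)^{\delta}$ in the hyperbolic setting for $d\geq 5$ (which suffices for your dyadic sums and is plausibly provable by the $L^p$/cube-decomposition route of Corollary~\ref{COR:Stri_GY}, but this is exactly what needs writing out), or (ii) abandon the bilinear reduction and run the four-way H\"older argument as the paper does.
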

\begin{proof}
First note that for sufficiently small $\varepsilon>0$
 \begin{align*}
     \frac{3(d+2)}{d+1+\varepsilon}>\begin{cases}
2+\frac{8}{d}\quad\,\,\,\,\,\, \text{for}\,\,d\geq 5\,\,\text{even},  \\ 2+\frac{8}{d+1}\quad \text{for}\,\,d\geq 5\,\,\text{odd},
     \end{cases}
 \end{align*}
where the right hand side of above inequality is the same as in \eqref{worstcase} for $d\geq 5$.
\vspace{0.13cm}
\noindent 
\\{\bf Case A.} $N_0\sim N_1\geq N_2\geq N_3$. 
\vspace{0.13cm}
\\
We decompose $\mathbb{Z}^d=\cup_jC_j$ into cubes $C_j\in\mathscr{C}_{N_2}$, and write $C_j\sim C_k$ to indicate that the set $\{k_1+k_2: k_1\in C_j\,\,\text{and}\,\,k_2\in C_k\}$ overlaps the Fourier support of $P_{\leq 2N_2}$. Hence, for a given $C_j$ there are finitely many $C_k$ with $C_j\sim C_k$. For sufficiently small $\varepsilon>0$, by using almost orthogonality, the estimates \eqref{StricartzwithN}, \eqref{StricartzwithCcubes}, and applying Cauchy-Schwarz to sum in $C_j\sim C_k$, we arrive at
\begin{align*}
&\Big|\int_0^T\int_{\mathbb{T}^d }\widetilde{u}_1\widetilde{u}_2\widetilde{u}_3\overline{v}\,dxdt\Big|\\&\lesssim\sum_{N_0\sim N_1\geq N_2\geq N_3}\sum_{C_j\sim C_k}\Vert P_{C_j}P_{N_0}vP_{C_k}P_{N_1} u_1P_{N_2} u_2P_{N_3} u_3\Vert_{L^1_{t,x}} \\&\lesssim\sum_{N_0\sim N_1\geq N_2\geq N_3}\sum_{C_j\sim C_k}\Vert P_{C_j}P_{N_0}v\Vert_{L_{t,x}^{\frac{3(n+2)}{n+1+\varepsilon}}}\Vert P_{C_k}P_{N_1}u_1\Vert_{L_{t,x}^{\frac{3(n+2)}{n+1+\varepsilon}}}\Vert P_{N_2}u_2\Vert_{L_{t,x}^{\frac{3(n+2)}{n+1+\varepsilon}}}\Vert P_{N_3}u_3\Vert_{L_{t,x}^{\frac{n+2}{1-\varepsilon}}}\\&\lesssim\sum_{N_0\sim N_1\geq N_2\geq N_3}N_2^{s_c (d,1)-\varepsilon}N_3^{s_c (d,1)+\varepsilon}\Vert P_{N_2}u_2\Vert_{Y^0}\Vert P_{N_3}u_3\Vert_{Y^0}\sum_{C_j\sim C_k}\Vert P_{C_j}P_{N_0}v\Vert_{Y^0}\Vert P_{C_k}P_{N_1}u_1\Vert_{Y^0} \\&\lesssim \sum_{\substack{N_0,N_1\\N_0\sim N_1}}\Big(\frac{N_0}{N_1}\Big)^s\Vert P_{N_0}v\Vert_{Y^{-s}}\Vert P_{N_1}u_1\Vert_{Y^s}\sum_{\substack{N_2,N_2\\N_2\geq N_3}}\Big(\frac{N_3}{N_2}\Big)^{\varepsilon}\Vert P_{N_2}u_2\Vert_{Y^{s_c (d,1)}}\Vert P_{N_3}u_3\Vert_{Y^{s_c (d,1)}} \\&\lesssim \Big(\sum_{N_0}\Vert P_{N_0}v\Vert_{Y^{-s}}^2\Big)^{\frac{1}{2}}\Big(\sum_{N_1}\Vert P_{N_1}u_1\Vert_{Y^{s}}^2\Big)^{\frac{1}{2}}\Big(\sum_{\substack{N_2,N_3\\N_2\geq N_3}}\Big(\frac{N_3}{N_2}\Big)^{\varepsilon}\Vert P_{N_2}u_2\Vert_{Y^{s_c (d,1)}}^2\Big)^{\frac{1}{2}}\\
&\qquad \times \Big(\sum_{\substack{N_2,N_3\\N_2\geq N_3}}\Big(\frac{N_3}{N_2}\Big)^{\varepsilon}\Vert P_{N_3}u_3\Vert_{Y^{s_c (d,1)}}^2\Big)^{\frac{1}{2}}\\
&\lesssim \Vert v\Vert_{Y^{-s}}\Vert u_1\Vert_{Y^{s}}\Vert u_2\Vert_{Y^{s_c (d,1)}}\Vert u_3\Vert_{Y^{s_c (d,1)}}.
\end{align*}
\noindent
\\{\bf Case B.} $N_1\sim N_2\geq N_0\geq N_3$. \vspace{0.13cm}
\\ We proceed as in the previous case yet, in this case, decomposing $\mathbb{Z}^d=\cup_jC_j$ into cubes $C_j$ of side length $N_0$ to obtain
\begin{align*}
&\Big|\int_0^T\int_{\mathbb{T}^d }\widetilde{u}_1\widetilde{u}_2\widetilde{u}_3\overline{v}\,dxdt\Big| \\&\lesssim\sum_{N_1\sim N_2\geq N_0\geq N_3}\sum_{C_j\sim C_k}\Vert P_{C_j}P_{N_1}u_1\Vert_{L_{t,x}^{\frac{3(n+2)}{n+1+\varepsilon}}}\Vert P_{C_k}P_{N_2}u_2\Vert_{L_{t,x}^{\frac{3(n+2)}{n+1+\varepsilon}}}\Vert P_{N_0}v\Vert_{L_{t,x}^{\frac{3(n+2)}{n+1+\varepsilon}}}\Vert P_{N_3}u_3\Vert_{L_{t,x}^{\frac{n+2}{1-\varepsilon}}}\\&\lesssim\sum_{N_1\sim N_2\geq N_0\geq N_3}N_0^{s_c (d,1)-\varepsilon}N_3^{s_c (d,1)+\varepsilon}\Vert P_{N_0}v\Vert_{Y^0}\Vert P_{N_3}u_3\Vert_{Y^0}\sum_{C_j\sim C_k}\Vert P_{C_j}P_{N_1}u_1\Vert_{Y^0}\Vert P_{C_k}P_{N_2}u_2\Vert_{Y^0}\\&\lesssim\sum_{N_1\sim N_2\geq N_0\geq N_3}N_0^{s+s_c (d,1)-\varepsilon}N_1^{-s-s_c (d,1)}\Vert P_{N_0}v\Vert_{Y^{-s}} \Vert P_{N_1}u_1\Vert_{Y^s}\Vert P_{N_2}u_2\Vert_{Y^{s_c (d,1)}}N_3^{\varepsilon}\Vert P_{N_3}u_3\Vert_{Y^{s_c (d,1)}} \\&\lesssim\sum_{\substack{N_1,N_2\\N_1\sim N_2}}\Vert P_{N_1}u_1\Vert_{Y^{s}}\Vert P_{N_2}u_2\Vert_{Y^{s_c (d,1)}}\sum_{\substack{N_0,N_3\\N_0\geq N_3}}\Big(\frac{N_3}{N_0}\Big)^{\varepsilon}\Vert P_{N_0}v\Vert_{Y^{-s}}\Vert P_{N_3}u_3\Vert_{Y^{s_c (d,1)}}\\&\lesssim \Vert v\Vert_{Y^{-s}}\Vert u_1\Vert_{Y^{s}}\Vert u_2\Vert_{Y^{s_c (d,1)}}\Vert u_3\Vert_{Y^{s_c (d,1)}}.
\end{align*}
\end{proof}
Lastly, we consider the trilinear estimate in four dimensions, which corresponds to the energy critical HNLS \eqref{eq:NLS}.
\begin{lemma}\label{lemma3}
 Let $j_0\in \{1,3\}$, $s\geq s_c (4,1)=1$, and $0<T\leq 1$. Then we have
\begin{align*}
\Big|\int_0^T\int_{\mathbb{T}^4}\widetilde{u}_1\widetilde{u}_2\widetilde{u}_3\overline{v}\,dxdt\Big|\lesssim \Vert v\Vert_{Y^{-s}}\Vert u_1\Vert_{Y^{s}}\Vert u_2\Vert_{Y^{1}}\Vert u_3\Vert_{Y^{1}}
\end{align*}
where $\widetilde{u}_j\in \{u_j, \overline{u}_j\}$.
\end{lemma}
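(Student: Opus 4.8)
The plan is to run the same cube-decomposition argument as in the proof of Lemma~\ref{lemma2}; the only change is that the admissibility of the H\"older exponents must now be checked against $\delta(j_0)=1$ rather than against the worst case $\delta=2$. Littlewood--Paley decompose $v=\sum_{N_0}P_{N_0}v$ and $u_j=\sum_{N_j}P_{N_j}u_j$; as in the proof of Lemma~\ref{lemma2}, by symmetry it is enough to treat
\[
\textbf{(A)}\quad N_0\sim N_1\ge N_2\ge N_3, \qquad \textbf{(B)}\quad N_1\sim N_2\ge N_0\ge N_3 .
\]
Fix a small $\eps>0$ and set $p:=\tfrac{3(d+2)}{d+1+\eps}=\tfrac{18}{5+\eps}$, to be used in three of the four H\"older slots, and $q:=\tfrac{d+2}{1-\eps}=\tfrac{6}{1-\eps}$ for the fourth, so that $\tfrac3p+\tfrac1q=1$. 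When $j_0\in\{1,3\}$ we have $\delta(j_0)=1$, so Definition~\ref{DEF:adm} only requires $p,q>\tfrac{2(d+2-\delta(j_0))}{d-\delta(j_0)}=\tfrac{10}{3}$; since $\tfrac{18}{5}>\tfrac{10}{3}$ and $6>\tfrac{10}{3}$, both $(4,p)$ and $(4,q)$ are admissible once $\eps$ is small enough. This is precisely where the hypothesis $j_0\neq2$ enters: for $\delta(j_0)=2$ the threshold rises to $4$, and four exponents whose reciprocals sum to $1$ cannot all exceed $4$, so this scheme breaks down.

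In Case~(A) I would partition $\mathbb{Z}^4$ into cubes $C_j$ of side length $N_2$ and write $C_j\sim C_k$ when $\{k_1+k_2:k_1\in C_j,\ k_2\in C_k\}$ meets the Fourier support of $P_{\le2N_2}$; because $P_{N_2}u_2\,P_{N_3}u_3$ is spatially supported at frequencies $\les N_2$, only the pairs $C_j\sim C_k$ (boundedly many $C_k$ per $C_j$) contribute to the $L^1_{t,x}$-norm of $P_{C_j}P_{N_0}v\cdot P_{C_k}P_{N_1}u_1\cdot P_{N_2}u_2\cdot P_{N_3}u_3$. Applying H\"older with exponents $(p,p,p,q)$, then the cube Strichartz bound \eqref{StricartzwithCcubes} on the two cube-localized factors and \eqref{StricartzwithN} on $P_{N_2}u_2$ and $P_{N_3}u_3$, then Cauchy--Schwarz in $C_j\sim C_k$ together with the orthogonality identity $\sum_{C_j}\Vert P_{C_j}P_{N_0}v\Vert_{Y^0}^2=\Vert P_{N_0}v\Vert_{Y^0}^2$, reduces the sum to
\[
\sum_{N_0\sim N_1\ge N_2\ge N_3}N_2^{\,1-\eps}\,N_3^{\,1+\eps}\,\Vert P_{N_0}v\Vert_{Y^0}\prod_{j=1}^3\Vert P_{N_j}u_j\Vert_{Y^0},
\]
where $N_2^{1-\eps}$ comes from three cube-Strichartz gains $2-\tfrac6p=\tfrac{1-\eps}{3}$ and $N_3^{1+\eps}$ from $2-\tfrac6q=1+\eps$. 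Converting $\Vert P_{N_0}v\Vert_{Y^0}\sim N_0^{s}\Vert P_{N_0}v\Vert_{Y^{-s}}$, $\Vert P_{N_1}u_1\Vert_{Y^0}\sim N_1^{-s}\Vert P_{N_1}u_1\Vert_{Y^s}$, and $\Vert P_{N_j}u_j\Vert_{Y^0}\sim N_j^{-1}\Vert P_{N_j}u_j\Vert_{Y^1}$ for $j=2,3$ (recall $s_c(4,1)=1$), the net dyadic weight collapses to $(N_0/N_1)^{s}(N_3/N_2)^{\eps}\les1$, and the series sums to $\Vert v\Vert_{Y^{-s}}\Vert u_1\Vert_{Y^s}\Vert u_2\Vert_{Y^1}\Vert u_3\Vert_{Y^1}$ by Cauchy--Schwarz in $N_0\sim N_1$ and a Schur test for the kernel $(N_3/N_2)^{\eps}\mathbf 1_{\{N_2\ge N_3\}}$.

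Case~(B) is identical but with cubes of side length $N_0$, now pairing $P_{N_0}v\,P_{N_3}u_3$ (frequencies $\les N_0$) against $P_{N_1}u_1\,P_{N_2}u_2$; this produces the factor $N_0^{1-\eps}N_3^{1+\eps}$. After the same $Y^0$-conversions and using $N_1\sim N_2$, the dyadic weight is $N_0^{\,1+s-\eps}N_1^{-1-s}N_3^{\,\eps}$, and since $N_0\le N_1$ and $1+s-\eps>0$ this is $\le N_1^{-\eps}N_3^{\eps}\le(N_3/N_0)^{\eps}$, a weight no longer involving $N_1,N_2$. Dropping the (now inessential) ordering constraint $N_1\ge N_0$ factors the sum as
\begin{align*}
\Big(\sum_{N_1\sim N_2}\Vert P_{N_1}u_1\Vert_{Y^s}\Vert P_{N_2}u_2\Vert_{Y^1}\Big)&\Big(\sum_{N_0\ge N_3}(N_3/N_0)^{\eps}\Vert P_{N_0}v\Vert_{Y^{-s}}\Vert P_{N_3}u_3\Vert_{Y^1}\Big)\\
&\les\Vert v\Vert_{Y^{-s}}\Vert u_1\Vert_{Y^s}\Vert u_2\Vert_{Y^1}\Vert u_3\Vert_{Y^1}.
\end{align*}
The difficulty here is arithmetical rather than conceptual: $(d,m)=(4,1)$ is the energy-critical case, so there is no slack in the derivative count and at every step the cube-Strichartz losses must be matched \emph{exactly} by the $s_c=1$ regularity carried by $u_2$ and $u_3$; this is what pins $p$ just below $\tfrac{18}{5}$ and $q$ just above $6$, and it is also what makes $\delta(j_0)=2$ inaccessible to this argument.
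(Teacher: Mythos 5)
Your argument is correct and is essentially the paper's own proof: you partition into the same two cases, use the same cube decomposition at scale $N_2$ (resp.\ $N_0$), apply H\"older with three factors of a near-$L^{18/5}$ exponent and one near-$L^6$ exponent, invoke the cube and global Strichartz bounds from Lemma \ref{LEM:Stri_Y} and Corollary \ref{COR:Stri_GY}, convert to weighted $Y^{-s},Y^s,Y^1$ norms, and sum via Cauchy--Schwarz and a Schur test. The paper simply specializes your $\varepsilon$-parametrized exponents $p=\tfrac{18}{5+\varepsilon}$, $q=\tfrac{6}{1-\varepsilon}$ to the concrete choice $\varepsilon=\tfrac17$, giving $p=\tfrac72$, $q=7$ and the dyadic weight $(N_3/N_2)^{1/7}$ in place of your $(N_3/N_2)^{\varepsilon}$; your added remark explaining why this scheme degenerates when $\delta(j_0)=2$ is consistent with the paper's restriction to $j_0\in\{1,3\}$ but is left implicit there.
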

\begin{proof}
First note that in view of Definition \ref{DEF:adm} we are allowed to pick $p>\frac{10}{3}$ to be able to utilize Strichartz estimates of Lemma \ref{LEM:Stri_Y} and Corollary \ref{COR:Stri_GY}.
\vspace{0.13cm}
\noindent
\\{\bf Case A.} $N_0\sim N_1\geq N_2\geq N_3$. \vspace{0.13cm}
\\
By the cube decomposition implementing to the cubes $C_j$ with side length $N_2$, the estimates \eqref{StricartzwithN}, \eqref{StricartzwithCcubes}, and Cauchy-Schwarz for the sum in $C_j\sim C_k$, we obtain
\begin{align*}
&\Big|\int_0^T\int_{\mathbb{T}^d }\widetilde{u}_1\widetilde{u}_2\widetilde{u}_3\overline{v}\,dxdt\Big|\\&\lesssim\sum_{N_0\sim N_1\geq N_2\geq N_3}\sum_{C_j\sim C_k}\Vert P_{C_j}P_{N_0}vP_{C_k}P_{N_1} u_1P_{N_2} u_2P_{N_3} u_3\Vert_{L^1_{t,x}} \\&\lesssim\sum_{N_0\sim N_1\geq N_2\geq N_3}\sum_{C_j\sim C_k}\Vert P_{C_j}P_{N_0}v\Vert_{L_{t,x}^{\frac{7}{2}}}\Vert P_{C_k}P_{N_1}u_1\Vert_{L_{t,x}^{\frac{7}{2}}}\Vert P_{N_2}u_2\Vert_{L_{t,x}^{\frac{7}{2}}}\Vert P_{N_3}u_3\Vert_{L_{t,x}^{7}}\\&\lesssim\sum_{N_0\sim N_1\geq N_2\geq N_3}N_2^{\frac{6}{7}}N_3^{\frac{8}{7}}\Vert P_{N_2}u_2\Vert_{Y^0}\Vert P_{N_3}u_3\Vert_{Y^0}\sum_{C_j\sim C_k}\Vert P_{C_j}P_{N_0}v\Vert_{Y^0}\Vert P_{C_k}P_{N_1}u_1\Vert_{Y^0} \\&\lesssim \sum_{\substack{N_0,N_1\\N_0\sim N_1}}\Big(\frac{N_0}{N_1}\Big)^s\Vert P_{N_0}v\Vert_{Y^{-s}}\Vert P_{N_1}u_1\Vert_{Y^s}\sum_{\substack{N_2,N_2\\N_2\geq N_3}}\Big(\frac{N_3}{N_2}\Big)^{\frac{1}{7}}\Vert P_{N_2}u_2\Vert_{Y^{1}}\Vert P_{N_3}u_3\Vert_{Y^{1}} \\&\lesssim \Big(\sum_{N_0}\Vert P_{N_0}v\Vert_{Y^{-s}}^2\Big)^{\frac{1}{2}}\Big(\sum_{N_1}\Vert P_{N_1}u_1\Vert_{Y^{s}}^2\Big)^{\frac{1}{2}}\Big(\sum_{\substack{N_2,N_3\\N_2\geq N_3}}\Big(\frac{N_3}{N_2}\Big)^{\frac{1}{7}}\Vert P_{N_2}u_2\Vert_{Y^{1}}^2\Big)^{\frac{1}{2}}\\&\quad\Big(\sum_{\substack{N_2,N_3\\N_2\geq N_3}}\Big(\frac{N_3}{N_2}\Big)^{\frac{1}{7}}\Vert P_{N_3}u_3\Vert_{Y^{1}}^2\Big)^{\frac{1}{2}}\\&\lesssim \Vert v\Vert_{Y^{-s}}\Vert u_1\Vert_{Y^{s}}\Vert u_2\Vert_{Y^{1}}\Vert u_3\Vert_{Y^{1}}.
\end{align*}
\noindent
{\bf Case B.} $N_1\sim N_2\geq N_0\geq N_3$. \vspace{0.13cm}
\\
Using cubes $C_j$ with side length $N_0$ this time in the cube decomposition and proceeding as before, we get
\begin{align*}
&\Big|\int_0^T\int_{\mathbb{T}^d }\widetilde{u}_1\widetilde{u}_2\widetilde{u}_3\overline{v}\,dxdt\Big| \\&\lesssim\sum_{N_1\sim N_2\geq N_0\geq N_3}\sum_{C_j\sim C_k}\Vert P_{C_j}P_{N_1}u_1\Vert_{L_{t,x}^{\frac{7}{2}}}\Vert P_{C_k}P_{N_2}u_2\Vert_{L_{t,x}^{\frac{7}{2}}}\Vert P_{N_0}v\Vert_{L_{t,x}^{\frac{7}{2}}}\Vert P_{N_3}u_3\Vert_{L_{t,x}^{7}}\\&\lesssim\sum_{N_1\sim N_2\geq N_0\geq N_3}N_0^{\frac{6}{7}}N_3^{\frac{8}{7}}\Vert P_{N_0}v\Vert_{Y^0}\Vert P_{N_3}u_3\Vert_{Y^0}\sum_{C_j\sim C_k}\Vert P_{C_j}P_{N_1}u_1\Vert_{Y^0}\Vert P_{C_k}P_{N_2}u_2\Vert_{Y^0}\\&\lesssim\sum_{N_1\sim N_2\geq N_0\geq N_3}N_0^{s+\frac{6}{7}}N_1^{-s-1}\Vert P_{N_0}v\Vert_{Y^{-s}} \Vert P_{N_1}u_1\Vert_{Y^s}\Vert P_{N_2}u_2\Vert_{Y^{1}}N_3^{\frac{1}{7}}\Vert P_{N_3}u_3\Vert_{Y^{1}} \\&\lesssim\sum_{\substack{N_1,N_2\\N_1\sim N_2}}\Vert P_{N_1}u_1\Vert_{Y^{s}}\Vert P_{N_2}u_2\Vert_{Y^{1}}\sum_{\substack{N_0,N_3\\N_0\geq N_3}}\Big(\frac{N_3}{N_0}\Big)^{\frac{1}{7}}\Vert P_{N_0}v\Vert_{Y^{-s}}\Vert P_{N_3}u_3\Vert_{Y^{1}}\\&\lesssim \Vert v\Vert_{Y^{-s}}\Vert u_1\Vert_{Y^{s}}\Vert u_2\Vert_{Y^{1}}\Vert u_3\Vert_{Y^{1}}.
\end{align*}
\end{proof}
\begin{remark}
The term $u_1$ appearing on the right side of the estimates in Lemmas \ref{lemma1}, \ref{lemma2}, and \ref{lemma3},  can be exchanged with any other $u_j$ for $j\in\{2,...,2m+1\}$.
\end{remark}

\section{Well-posedness for HNLS}\label{wellposednessHNLS}
In this section, we outline the proof of Theorem~\ref{THM:main} using the multilinear estimates established in Section~\ref{SectNonlinear}.  
For simplicity, we focus on the scaling-critical regularities, relying on Lemmas~\ref{lemma1}, Lemmas~\ref{lemma2}, and Lemmas~\ref{lemma3}.  
The same argument also applies in the subcritical cases, where one instead makes use of Lemma~\ref{PROP:T2m}.

To begin with, we have the following result.
\begin{proposition}\label{keyproposition}
 Let $s\geq s_c(d,m)$ be fixed where
 \begin{equation*}
 \begin{aligned}
d&=2, \quad\,\,\,\,\, m\geq4,\quad j_0=1,\\ d&=3, 4,\quad m\geq2,\quad j_0\in\{1,...,d-1\},\\ d&=4, \quad\,\,\,\,\, m=1,\quad j_0=1,3,\\ d&\geq 5,\,\,\,\,\, \quad m\geq 1,\quad j_0\in\{1,...,d-1\}. 
  \end{aligned}
\end{equation*}
 Then, for all $0<T\leq 1$, and $u_j\in X^s([0,T))$, $j=1,...,2m+1$, we have
 \begin{align}\label{mainest}
 \Big\Vert \int_0^te^{i(t-\tau)\Delta_{\pm}} \prod_{j=1}^{2m+1}\widetilde{u}_j\,d\tau\Big\Vert_{X^s([0,T))}\lesssim \sum_{l=1}^{2m+1}\Vert u_l\Vert_{X^s([0,T))}\prod_{\substack{j=1\\j\neq l}}^{2m+1}\Vert u_j\Vert_{X^{s_c(d,m)}([0,T))}
 \end{align}
 where $\widetilde{u}_j\in \{u_j, \overline{u}_j\}$.
\end{proposition}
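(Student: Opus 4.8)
The plan is to deduce Proposition~\ref{keyproposition} directly from the multilinear estimates in Lemmas~\ref{lemma1}, \ref{lemma2}, and~\ref{lemma3} by combining them with the duality lemma for the Duhamel operator, namely Lemma~\ref{lemmaduality}. First I would apply Lemma~\ref{lemmaduality} with $f = \prod_{j=1}^{2m+1}\widetilde{u}_j$ to reduce the left-hand side of \eqref{mainest} to estimating the spacetime pairing
\[
\sup_{\Vert v\Vert_{Y^{-s}([0,T))}\leq 1}\Big|\int_0^T\int_{\mathbb{T}^d}\Big(\prod_{j=1}^{2m+1}\widetilde{u}_j\Big)\,\overline{v}\,dx\,dt\Big|.
\]
This requires first checking that $\prod_{j=1}^{2m+1}\widetilde{u}_j\in L^1([0,T);H^s(\T^d))$, which follows from the embedding $X^s\hookrightarrow Y^s\hookrightarrow L^\infty_t H^s_x$ in Lemma~\ref{linearestlemma} together with the algebra/product structure (or, more simply, from the multilinear estimate itself, which bounds a quantity dominating the $L^1_t H^s_x$ pairing).

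Next, for each admissible tuple $(d,m,j_0)$ in the statement, I would invoke the corresponding multilinear estimate: Lemma~\ref{lemma1} covers $d=2,m\geq4$ and $d\geq3,m\geq2$; Lemma~\ref{lemma2} covers $d\geq5,m=1$; and Lemma~\ref{lemma3} covers $d=4,m=1$ with $j_0\in\{1,3\}$. Each of these gives
\[
\Big|\int_0^T\int_{\mathbb{T}^d}\Big(\prod_{j=1}^{2m+1}\widetilde{u}_j\Big)\overline{v}\,dx\,dt\Big|\lesssim \Vert v\Vert_{Y^{-s}}\Vert u_1\Vert_{Y^s}\prod_{j=2}^{2m+1}\Vert u_j\Vert_{Y^{s_c(d,m)}},
\]
and by the remark following Lemma~\ref{lemma3} the distinguished slot $u_1$ may be replaced by any $u_l$. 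Taking the supremum over $\Vert v\Vert_{Y^{-s}}\leq 1$, summing over which index $l$ carries the high regularity $Y^s$ (or rather bounding by the symmetrized sum), and finally using $X^\sigma\hookrightarrow Y^\sigma$ from \eqref{U^2trXsembed}/Lemma~\ref{linearestlemma} to pass from $Y$-norms to $X$-norms on the right-hand side, yields exactly \eqref{mainest}.

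The only genuine subtlety — rather than an obstacle — is bookkeeping: one must make sure that the regularity $s$ appearing in Lemmas~\ref{lemma1}--\ref{lemma3} is allowed to be any $s\geq s_c(d,m)$ (which it is, as stated), so that the same estimate serves both the critical case $s=s_c$ and, via $Y^s$-monotonicity, gives the needed slack; and one must verify that the list of $(d,m,j_0)$ in Proposition~\ref{keyproposition} is precisely the union of the hypotheses of the three lemmas, with the exclusion $(d,m)=(4,1)$, $\delta(j_0)=2$ (i.e. $j_0=2$) correctly reflected by restricting to $j_0\in\{1,3\}$ in the $d=4$, $m=1$ case. There is no hard analytic step here: all the analytic content is in the Strichartz-based multilinear estimates of Section~\ref{SectNonlinear} and in the abstract function-space machinery (Lemmas~\ref{linearestlemma} and~\ref{lemmaduality}) recalled in Section~\ref{preliminaries}; Proposition~\ref{keyproposition} is their packaging into the form needed for the contraction-mapping argument of Section~\ref{wellposednessHNLS}.
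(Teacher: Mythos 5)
Your proposal is correct and takes essentially the same approach as the paper: the paper's proof is a one-line invocation of Lemma~\ref{lemmaduality} combined with Lemmas~\ref{lemma1}--\ref{lemma3}, and your write-up simply spells out the same steps (duality reduction, invoking the appropriate multilinear estimate by case on $(d,m,j_0)$, symmetrization over the distinguished factor via the remark, and passing from $Y^\sigma$ to $X^\sigma$ via the embedding~\eqref{U^2trXsembed}).
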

\begin{proof}
    The estimate \eqref{mainest} directly follows from combining Lemma \ref{lemmaduality}, and Lemmas \ref{lemma1}, \ref{lemma2}, and \ref{lemma3}.
\end{proof}
\begin{proof}[Proof of Theorem \ref{THM:main}]
We give the proof only for $s=s_c(d,m)$ and small data, for details of the large data case and uniqueness, see \cite{HTT11,KV14}. Let $\delta>0$, $\varepsilon>0$ to be determined later. We apply contraction argument to the operator
\begin{align}\label{Gammaoperator}
\Gamma(u)(t):=e^{it\Delta_{\pm}}u_0\pm i\int_0^te^{i(t-\tau)\Delta_{\pm}}(|u|^{2m}u)(\tau)\,d\tau
\end{align}
on the ball
\begin{align*}
    B_{\delta}:=\{u\in X^{s_c(d,m)}([0,1))\cap C([0,1);H^{s_c(d,m)}(\mathbb{T}^d )): \Vert u\Vert_{X^{s_c(d,m)}([0,1))}\leq \delta\},
\end{align*}
for $u_0\in H^{s_c(d,m)}(\mathbb{T}^d )$ satisfying $\Vert u_0\Vert_{H^{s_c(d,m)}(\mathbb{T}^d )}\leq \varepsilon$. Therefore, for $u\in B_{\delta}$, applying Lemma \ref{linearestlemma} and Proposition \ref{keyproposition} to the operator \eqref{Gammaoperator} leads to 
\begin{align*}
    \Vert \Gamma(u)\Vert_{X^{s_c(d,m)}([0,1))}&\leq \Vert e^{it\Delta_{\pm}}u_0\Vert_{X^{s_c(d,m)}([0,1))}+ \Big\Vert \int_0^te^{i(t-\tau)\Delta_{\pm}}(|u|^{2m}u)(\tau)\,d\tau\Big\Vert_{X^{s_c(d,m)}([0,1))}\\&\leq \Vert u_0\Vert_{H^{s_c(d,m)}(\mathbb{T}^d )}+C\Vert u\Vert_{X^{s_c(d,m)}([0,1))}^{2m+1}\\&\leq \varepsilon+C\delta^{2m+1}\leq \delta
\end{align*}
provided that we pick $\varepsilon=\frac{\delta}{2}$ and $\delta=(2C)^{-\frac{1}{2m}}$. This shows that $\Gamma$ maps $B_{\delta}$ to itself. Continuing in the same way by selecting $\delta$ same as above, we obtain
\begin{align*}
    \Vert \Gamma(u)-\Gamma(v)\Vert_{X^{s_c(d,m)}([0,1))}&\leq C(\Vert u\Vert_{X^{s_c(d,m)}([0,1))}^{2m}+\Vert v\Vert_{X^{s_c(d,m)}([0,1))}^{2m})\Vert u-v\Vert_{X^{s_c(d,m)}([0,1))}\\&\leq \frac{1}{2}\Vert u-v\Vert_{X^{s_c(d,m)}([0,1))}
\end{align*}
which proves that $\Gamma$ is a contraction so that it has a unique fixed point.
\end{proof}

\appendix

\section{On the cubic NLS}\label{appendix}

In this appendix, 
we consider the cubic Schrodinger equation on $\T^3$ 
\begin{equation}
\label{eq:NLScubic}
\begin{cases}
i\pa_t u + \Delta u \pm |u|^{2}u=0,\\
u(t,x)|_{t=0}=u_0(x),
\end{cases}
\quad (t,x)\in\mathbb{R}\times\mathbb{T}^{3},
\end{equation}
The well-posedness of \eqref{eq:NLScubic} in the critical space $H^{\frac12}(\T^3)$ was claimed in Theorem \ref{THM:elliptic},
however, its proof was left open in \cite{YW13}. 
The proof follows the same strategy as in other cases treated in \cite{YW13}, 
with the additional use of Strichartz estimates from \cite{BD15,KV14}. 
For completeness, we provide the proof in this appendix.

Using the scale-invariant Strichartz estimates of \cite{B93}, the well-posedness of \eqref{eq:NLScubic} at the scaling-critical regularity indices for all cases in Theorem~\ref{THM:elliptic}, except when $d=3,4$ with $m=1$, was established in  \cite{HTT11,YW13}.  
Subsequently, for the case $d=4$, $m=1$, well-posedness in the critical space was resolved in \cite{KV14}, building on the results of \cite{BD15}.  
Thus, the case $(d,m)=(3,1)$ remains the only cases from Theorem \ref{THM:elliptic}.

\begin{theorem}\label{thmwellposedfornls}
Fix $s\geq \frac{1}{2}$ and let $u_0\in H^{s}(\mathbb{T}^3)$. Then, there exists a time $T=T(u_0)$ and a unique solution $$u\in C([0,T);H^{s}(\mathbb{T}^3))\cap X^{s}([0,T))$$ to  \eqref{eq:NLScubic} with $m=1$. Moreover, \eqref{eq:NLScubic} is globally well-posed for sufficiently small initial data $u_0\in H^{1}(\mathbb{T}^3)$.
\end{theorem}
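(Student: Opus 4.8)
The plan is to obtain Theorem~\ref{thmwellposedfornls} from the $(d,m)=(3,1)$, \emph{elliptic} analogue of Lemmas~\ref{lemma1}--\ref{lemma3} and the contraction argument already used for Theorem~\ref{THM:main}. The estimate to establish is: for $s\ge\tfrac12$, $0<T\le1$, and $\widetilde u_j\in\{u_j,\overline u_j\}$,
\[
\Big|\int_0^T\!\!\int_{\T^3}\widetilde u_1\,\widetilde u_2\,\widetilde u_3\,\overline v\,dx\,dt\Big|\ \lesssim\ \|v\|_{Y^{-s}}\,\|u_1\|_{Y^{s}}\,\|u_2\|_{Y^{1/2}}\,\|u_3\|_{Y^{1/2}}.
\]
Granting this, Lemma~\ref{lemmaduality} upgrades it to the $(d,m)=(3,1)$ instance of Proposition~\ref{keyproposition}, and feeding that together with Lemma~\ref{linearestlemma} into the fixed-point scheme on the ball $B_\delta\subset X^{1/2}([0,T))$ exactly as in the proof of Theorem~\ref{THM:main} yields local well-posedness in $H^{1/2}(\T^3)$; for $s>\tfrac12$ one may instead invoke the already available subcritical estimate, Lemma~\ref{PROP:T2m}. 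For the global statement, observe that $s_c(3,1)=\tfrac12<1$, so \eqref{eq:NLScubic} is $H^1$-subcritical: Lemma~\ref{PROP:T2m} provides a local theory in $H^1(\T^3)$ whose existence time depends only on $\|u_0\|_{H^1}$, and conservation of mass and energy together with the sharp Gagliardo--Nirenberg inequality on $\T^3$ (which, when $\|u_0\|_{H^1}$ is small, controls $\|\nabla u(t)\|_{L^2}$ uniformly in $t$ by a continuity argument) bootstraps this to a global solution.

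The proof of the displayed estimate follows line by line that of Lemma~\ref{lemma3}, with $d=3$ in place of $d=4$. After a Littlewood--Paley decomposition of the four inputs, assume by symmetry that the $u$-frequencies satisfy $N_1\ge N_2\ge N_3$ and split into Case~A ($N_0\sim N_1\ge N_2\ge N_3$) and Case~B ($N_1\sim N_2\ge N_0\ge N_3$). In Case~A one partitions $\mathbb{Z}^3$ into cubes $C_j$ of side $N_2$, using that for a fixed cube carrying the frequency of $P_{N_0}v$ only $O(1)$ cubes $C_k$ can carry the frequency of $P_{N_1}u_1$ (since $P_{N_2}u_2\,P_{N_3}u_3$ is spatially band-limited to a ball of radius $\lesssim N_2$), so that $\{P_{C_j}P_{N_0}v\,P_{N_2}u_2\}_j$ and $\{P_{C_k}P_{N_1}u_1\,P_{N_3}u_3\}_k$ are almost orthogonal in $L^2_x$; in Case~B one uses cubes of side $N_0$ instead. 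One then applies H\"older in $(t,x)$ with exponents $(p,p,p,q)$, $\tfrac3p+\tfrac1q=1$, taking $p=\tfrac{15}{4+\varepsilon}$ and $q=\tfrac{5}{1-\varepsilon}$ for a small $\varepsilon>0$, and placing the lowest-frequency factor in $L^q$.

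The crucial---indeed the only nontrivial---point is that $\tfrac{15}{4}>\tfrac{2(d+2)}{d}=\tfrac{10}{3}$ when $d=3$, so both $p$ and $q$ are admissible in the sense of Definition~\ref{DEF:adm} with $\delta(j_0)=0$: here one genuinely uses the scale-invariant periodic Strichartz inequality of \cite{BD15,KV14} (Theorem~\ref{THM:Stri_scale}), not Bourgain's $p>4$ estimates from \cite{B93}, and the unavailability of such a sub-$L^4$ scale-invariant bound is precisely what left the case $(d,m)=(3,1)$ open in \cite{YW13}. Transferring Theorem~\ref{THM:Stri_scale} to $U^p_{\Delta}L^2$-data and to cube-localized data via Lemma~\ref{LEM:Stri_Y} and Corollary~\ref{COR:Stri_GY} gives $\|P_{C}P_{N_0}v\|_{L^p_{t,x}}\lesssim N_2^{\frac{1-2\varepsilon}{6}}\|P_{C}P_{N_0}v\|_{Y^0}$, with the power dictated by the \emph{cube} side $N_2$ and not by $N_0$, the same bound for the $P_{C_k}P_{N_1}u_1$ and $P_{N_2}u_2$ pieces, and $\|P_{N_3}u_3\|_{L^q_{t,x}}\lesssim N_3^{\frac12+\varepsilon}\|P_{N_3}u_3\|_{Y^0}$. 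Multiplying the four factors, summing the cube pieces by Cauchy--Schwarz and the additivity of $\|\cdot\|_{Y^0}^2$ over disjoint Fourier supports, and converting $Y^0$-norms into the weighted ones through $\|P_Nf\|_{Y^0}=N^{-a}\|P_Nf\|_{Y^{a}}$, the accumulated power of $N_0,\dots,N_3$ collapses to $(N_0/N_1)^{s}(N_3/N_2)^{\varepsilon}$ in Case~A and to $(N_0/N_1)^{s+\frac12-\varepsilon}(N_3/N_1)^{\varepsilon}$ in Case~B, i.e.\ to a product of nonnegative powers of ratios of consecutively ordered frequencies; the remaining dyadic sum is then closed by Cauchy--Schwarz in the comparable pair ($N_0\sim N_1$, resp.\ $N_1\sim N_2$) and a Schur-type estimate in the two low frequencies.

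The main obstacle is therefore conceptual rather than computational: one must have a scale-invariant $L^p$ Strichartz estimate for some $p$ strictly between $\tfrac{10}{3}$ and $4$. The symmetric choice $p=q=4$ fails---it sits at the endpoint $p=4$, where at the time of \cite{YW13} only an $\varepsilon$-lossy, rational-torus estimate was known, and even granting such a bound the resulting dyadic sum in $(N_2,N_3)$ does not converge---whereas pushing the three ``high'' factors below $\tfrac{15}{4}$ simultaneously removes the derivative loss and produces the needed summable gain. Beyond importing Theorem~\ref{THM:Stri_scale}, the argument is bookkeeping identical to Section~\ref{SectNonlinear}.
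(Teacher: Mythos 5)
Your proposal is correct and follows essentially the same route as the paper's Appendix~\ref{appendix}: Littlewood--Paley decomposition, the two ordering cases $N_0\sim N_1\ge N_2\ge N_3$ and $N_1\sim N_2\ge N_0\ge N_3$, a cube decomposition at the third-highest frequency, H\"older with three factors below $L^4$ and the lowest-frequency factor above, and the scale-invariant Strichartz estimate of \cite{BD15,KV14} transferred through Lemma~\ref{LEM:Stri_Y} and Corollary~\ref{COR:Stri_GY}, followed by Cauchy--Schwarz and a Schur-type dyadic sum. You also correctly identify the exact conceptual obstruction: $p=q=4$ gives $(N_2/N_3)^{1/4}$ and the sum diverges, while lowering the three high factors to some $p\in(\tfrac{10}{3},\tfrac{15}{4})$ produces a genuinely summable gain, and such $p$ are admissible only thanks to the scale-invariant Strichartz estimate below $L^4$. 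The only (cosmetic) difference is the H\"older exponents: you take $p=\tfrac{15}{4+\varepsilon}$, $q=\tfrac{5}{1-\varepsilon}$ near the threshold, yielding an $\varepsilon$-small gain $(N_3/N_2)^\varepsilon$, whereas the paper fixes $p=\tfrac72$, $q=7$, which lands comfortably inside the admissible window and produces the cleaner gain $(N_3/N_2)^{2/7}$; both close the sums, but the paper's concrete choice avoids carrying an $\varepsilon$ through the bookkeeping.
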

We use the following Strichartz estimates associated with Schr\"odinger equation \ref{eq:NLScubic} to conclude Theorem \ref{thmwellposedfornls}.   
\begin{lemma}[\cite{BD15, KV14}]\label{Strichartzestfornls} For all $N\geq 1$ and $p>\frac{2(d+2)}{d}$, we have
\begin{equation*}
\begin{aligned}
  \Vert P_{\leq N}u\Vert_{L^p([0,T)\times \mathbb{T}^d )} \lesssim N^{\frac{d}{2}-\frac{d+2}{p}}\Vert P_{\leq N}u\Vert_{Y^0([0,T))}. 
\end{aligned}
\end{equation*}
Moreover, for all $C\in \mathscr{C}_N$,
\begin{equation*}
\begin{aligned}
  \Vert P_C u\Vert_{L^p([0,T)\times \mathbb{T}^d )} \lesssim N^{\frac{d}{2}-\frac{d+2}{p}}\Vert P_C u\Vert_{Y^0([0,T))}. 
\end{aligned}
\end{equation*}
\end{lemma}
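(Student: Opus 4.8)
The plan is to deduce Lemma~\ref{Strichartzestfornls} from the elliptic scale-invariant Strichartz estimate by the same transference argument already used for the hyperbolic analogues, Lemma~\ref{LEM:Stri_Y} and Corollary~\ref{COR:Stri_GY}; no new analytic input is required beyond what has already been cited.

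First I would invoke the Bourgain--Demeter estimate in the elliptic case, namely Theorem~\ref{THM:Stri_scale} ($=$ \cite[Theorem~2]{KV14}): for $p>\frac{2(d+2)}{d}$ and $N\in 2^{\Z}$ one has $\|e^{it\Delta}P_{\leq N}\phi\|_{L^p_{t,x}([0,1]\times\T^d)}\lesssim N^{\frac d2-\frac{d+2}{p}}\|P_{\leq N}\phi\|_{L^2(\T^d)}$. Since $p>2$, the atomic structure of $U^p$ lets one replace the free evolution $e^{it\Delta}\phi$ by an arbitrary element of $U^p_{\Delta}L^2$: this is the standard transference principle (\cite[Proposition~2.19]{HHK09}, \cite[Corollary~3.2]{HTT11}), which gives $\|P_{\leq N}u\|_{L^p([0,T)\times\T^d)}\lesssim N^{\frac d2-\frac{d+2}{p}}\|P_{\leq N}u\|_{U^p_{\Delta}L^2([0,T))}$. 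To pass from $U^p_{\Delta}L^2$ to $Y^0$, I would chain the continuous embeddings $U^2_{\Delta}H^0\hookrightarrow X^0\hookrightarrow Y^0\hookrightarrow V^2_{\Delta}H^0$ (the elliptic version of \eqref{U^2trXsembed}) with $V^2_{\mathrm{rc}}\hookrightarrow U^p$, valid for $p>2$ by \eqref{embeddingUV}, recalling that the induced norm on $V^p_{\mathrm{rc}}$ coincides with the $V^p$ norm. This yields the first inequality of the lemma.

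For the cube version I would run a Galilean boost exactly as in the proof of Lemma~\ref{LEM:Stri_Ga}. Given $C\in\mathscr{C}_N$ with center $r\in\R^d$, set $C_0:=C-r$, a cube of side-length $N$ centered at the origin, and use the elliptic Galilean identity
\[
x\cdot k-t\sum_{j=1}^d\eps_j k_j^2=x\cdot r-t\sum_{j=1}^d\eps_j r_j^2+\Big(x-2t(\eps_1 r_1,\dots,\eps_d r_d)\Big)\cdot(k-r)-t\sum_{j=1}^d\eps_j(k_j-r_j)^2.
\]
With $\phi_0(x):=e^{-2\pi i x\cdot r}\phi(x)$ this shows that $e^{it\Delta}P_C\phi$ equals $e^{it\Delta}P_{C_0}\phi_0$ up to the unimodular factor $e^{2\pi i(x\cdot r-t\sum_j\eps_j r_j^2)}$ and the space-time shift $x\mapsto x-2t(\eps_1 r_1,\dots,\eps_d r_d)$; both operations preserve the $L^p_{t,x}$ norm, and they preserve the $Y^0$ norm as well since $Y^0$ is defined frequency-by-frequency through $\widehat{e^{-it\Delta}u(t)}(k)$ and these operations amount to a unitary relabeling of Fourier modes. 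As $C_0$ is contained in a box of side-length $\lesssim N$ centered at the origin, $P_{C_0}=P_{C_0}P_{\leq cN}$ for a suitable constant $c$, so applying the first inequality to $P_{C_0}\phi_0$ (and using $\|P_{C_0}\phi_0\|_{L^2}=\|P_C\phi\|_{L^2}$) gives the cube estimate.

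I do not anticipate a genuine obstacle: all the substantive content sits in Theorem~\ref{THM:Stri_scale}, and the remaining steps are the routine transference and Galilean-invariance manipulations already carried out for $\Delta_\pm$ earlier in the paper. The only point deserving a line of care is verifying that the phase factor and the translation produced by the boost leave the $Y^0$ norm unchanged, which is immediate from the frequency-localized definition of $Y^0$.
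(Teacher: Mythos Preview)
Your approach is correct and matches the paper's own treatment: the paper does not prove Lemma~\ref{Strichartzestfornls} at all but simply cites it from \cite{BD15,KV14}, so there is no ``paper's proof'' to compare against. Your argument is exactly the elliptic specialization of what the paper already carries out in Lemma~\ref{LEM:Stri_Y}, Lemma~\ref{LEM:Stri_Ga}, and Corollary~\ref{COR:Stri_GY} for the hyperbolic case---invoke Theorem~\ref{THM:Stri_scale}, transfer to $U^p_{\Delta}L^2$ via the atomic structure, embed into $Y^0$, and handle arbitrary cubes via the Galilean boost---so nothing is missing. One minor streamlining: for the cube estimate it is slightly cleaner to first derive the free-evolution bound $\|e^{it\Delta}P_C\phi\|_{L^p}\lesssim N^{\frac d2-\frac{d+2}{p}}\|P_C\phi\|_{L^2}$ via the boost (as in Lemma~\ref{LEM:Stri_Ga}) and only then transfer to $Y^0$, rather than arguing that the boost preserves $Y^0$ directly; both routes work, but the former avoids the extra sentence about frequency relabeling.
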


Then, Theorem \ref{thmwellposedfornls} reduced to the following multilinear estimate.
\begin{lemma}\label{LEM:multi3d}
 Let $s\geq \frac{1}{2}$, and $0<T\leq 1$. Then we have
\begin{align*}
\Big|\int_0^T\int_{\mathbb{T}^3}\widetilde{u}_1\widetilde{u}_2\widetilde{u}_3\overline{v}\,dxdt\Big|\lesssim \Vert v\Vert_{Y^{-s}}\Vert u_1\Vert_{Y^{s}}\Vert u_2\Vert_{Y^{\frac{1}{2}}}\Vert u_3\Vert_{Y^{\frac{1}{2}}}
\end{align*}
where $\widetilde{u}_j\in \{u_j, \overline{u}_j\}$.
\end{lemma}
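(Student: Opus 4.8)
The plan is to follow the Littlewood--Paley/cube-decomposition scheme used in the proof of Lemma~\ref{lemma2}, exploiting the fact that on the \emph{elliptic} torus $\T^3$ the scale-invariant Strichartz estimates of Lemma~\ref{Strichartzestfornls} are available for every $p>\frac{2(d+2)}{d}=\frac{10}{3}$, which is a strictly wider range than in the hyperbolic $\T^3$ setting (where admissibility requires $p>4$). First I would split each of $v,u_1,u_2,u_3$ into Littlewood--Paley pieces $P_{N_0}v,P_{N_1}u_1,P_{N_2}u_2,P_{N_3}u_3$ and, using the symmetry of the estimate among $u_1,u_2,u_3$, reduce to $N_1\ge N_2\ge N_3$. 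Since the spatial Fourier support of $\widetilde u_1\widetilde u_2\widetilde u_3$ lies in a ball of radius $\lesssim N_1$, the corresponding summand vanishes unless $N_0\lesssim N_1$, leaving two regimes: \textbf{Case A}, $N_0\sim N_1\ge N_2\ge N_3$, and \textbf{Case B}, $N_1\sim N_2\ge N_0,N_3$.

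In Case A I would partition $\Z^3$ into cubes of side length $N_2$ and localize the two highest-frequency factors $P_{N_0}v$ and $P_{N_1}u_1$ to such cubes $C_j,C_k$; as usual only the pairs with $C_j\sim C_k$ (finite overlap) contribute, so the quadrilinear $L^1_{t,x}$ form is almost orthogonal in $C_j$. Then I would apply Hölder in space-time with exponents $(p,p,p,q)$, placing $P_{C_j}P_{N_0}v$, $P_{C_k}P_{N_1}u_1$ and $P_{N_2}u_2$ in $L^p$ and the lowest-frequency factor $P_{N_3}u_3$ in $L^q$, where
\[
p=\frac{15}{4+\eps},\qquad q=\frac{5}{1-\eps},\qquad \tfrac{3}{p}+\tfrac{1}{q}=1,
\]
for a small $\eps>0$. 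For such $\eps$ one has $p,q>\frac{10}{3}$, so both $(3,p)$ and $(3,q)$ are admissible and the Strichartz bounds of Lemma~\ref{Strichartzestfornls}, as well as their cube-localized versions, apply. Since $\frac d2-\frac{d+2}{p}=\frac16-\frac\eps3$ and $\frac d2-\frac{d+2}{q}=\frac12+\eps$ when $d=3$, applying these bounds at frequency scale $N_2$ (three times) and at scale $N_3$ (once), summing the cubes by Cauchy--Schwarz together with the $\ell^2$ almost-orthogonality of the $Y^0$-pieces, and then rewriting each $\Vert P_N\cdot\Vert_{Y^0}$ as the appropriate power of $N$ times $\Vert P_N\cdot\Vert_{Y^{-s}}$, $\Vert P_N\cdot\Vert_{Y^{s}}$ or $\Vert P_N\cdot\Vert_{Y^{1/2}}$, the $N$-powers telescope to $(N_0/N_1)^s\,(N_3/N_2)^{\eps}$. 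The remaining dyadic sums are then controlled by Cauchy--Schwarz (and Schur's test for the $N_2\ge N_3$ sum), giving the bound $\Vert v\Vert_{Y^{-s}}\Vert u_1\Vert_{Y^{s}}\Vert u_2\Vert_{Y^{1/2}}\Vert u_3\Vert_{Y^{1/2}}$ exactly as in Lemma~\ref{lemma2}.

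Case B is treated the same way, now performing the cube decomposition at the scale $M:=\max(N_0,N_3)$ on the two highest-frequency factors $P_{N_1}u_1$ and $P_{N_2}u_2$, placing whichever of $P_{N_0}v$, $P_{N_3}u_3$ is carried by frequency $M$ into $L^p$ and the other into $L^q$; the same Hölder triple $(p,p,p,q)$ and the same Strichartz exponents then produce, after converting to $Y$-norms and using $N_1\sim N_2$, the net power $(N_0/N_1)^{s+\frac12}\,(N_*/M)^{\eps}$ with $N_*=\min(N_0,N_3)$, both factors being $\le 1$ and yielding summability. Combining the two cases gives the claim. The only point that genuinely requires attention --- and the reason the argument goes through here but not at the critical regularity in the hyperbolic $\T^3$ setting --- is the admissibility check $\frac{15}{4+\eps}>\frac{10}{3}$, i.e.\ $\eps<\frac12$, so that the slightly subcritical exponent used for the cube-localized factors still lies in the scale-invariant Strichartz range $p>\frac{10}{3}$ available on $\T^3$ by \cite{BD15,KV14}; everything else is a routine transcription of the proof of Lemma~\ref{lemma2}. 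In particular, in contrast with the subcritical Lemma~\ref{PROP:T2m}, no bilinear refinement is needed.
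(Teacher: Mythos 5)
Your proposal is correct and follows essentially the same line of argument as the paper: Littlewood--Paley decomposition into Cases A ($N_0\sim N_1\ge N_2\ge N_3$) and B ($N_1\sim N_2\ge N_0,N_3$), cube decomposition at the second-largest frequency, quadrilinear H\"older with three factors in $L^p$ and the lowest one in $L^q$, scale-invariant cube-localized Strichartz from Lemma~\ref{Strichartzestfornls}, and Cauchy--Schwarz over the dyadic sums. The only cosmetic difference is that the paper fixes the specific exponents $(p,q)=(\tfrac72,7)$, which is precisely your choice $\varepsilon=\tfrac27$ in the family $p=\tfrac{15}{4+\varepsilon}$, $q=\tfrac{5}{1-\varepsilon}$ (and you also spell out the subcase $N_3>N_0$ in Case B, which the paper handles implicitly by symmetry).
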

\begin{proof}
It suffices to consider the following regions. \noindent \vspace{0.13cm}
\\{\bf Case A.} $N_0\sim N_1\geq N_2\geq N_3$. \vspace{0.13cm}
\\ Let $P_{C_j}$ denote the Fourier projection onto cube of size $N_2$. As above, we write $C_j\sim C_k$ whenever the sum set overlaps the Fourier support of $P_{\leq 2N_2}$. Therefore, by H\"older inequality, Lemma \ref{Strichartzestfornls}, and Cauchy-Schwarz in sum $C_j\sim C_k$, we have    
\begin{align*}
&\Big|\int_0^T\int_{\mathbb{T}^d }\widetilde{u}_1\widetilde{u}_2\widetilde{u}_3\overline{v}\,dxdt\Big|\\&\lesssim\sum_{N_0\sim N_1\geq N_2\geq N_3}\sum_{C_j\sim C_k}\Vert P_{C_j}P_{N_0}vP_{C_k}P_{N_1} u_1P_{N_2} u_2P_{N_3} u_3\Vert_{L^1_{t,x}} \\&\lesssim\sum_{N_0\sim N_1\geq N_2\geq N_3}\sum_{C_j\sim C_k}\Vert P_{C_j}P_{N_0}v\Vert_{L_{t,x}^{\frac{7}{2}}}\Vert P_{C_k}P_{N_1}u_1\Vert_{L_{t,x}^{\frac{7}{2}}}\Vert P_{N_2}u_2\Vert_{L_{t,x}^{\frac{7}{2}}}\Vert P_{N_3}u_3\Vert_{L_{t,x}^{7}}\\&\lesssim\sum_{N_0\sim N_1\geq N_2\geq N_3}N_2^{\frac{3}{14}}N_3^{\frac{11}{14}}\Vert P_{N_2}u_2\Vert_{Y^0}\Vert P_{N_3}u_3\Vert_{Y^0}\sum_{C_j\sim C_k}\Vert P_{C_j}P_{N_0}v\Vert_{Y^0}\Vert P_{C_k}P_{N_1}u_1\Vert_{Y^0} \\&\lesssim \sum_{\substack{N_0,N_1\\N_0\sim N_1}}\Big(\frac{N_0}{N_1}\Big)^s\Vert P_{N_0}v\Vert_{Y^{-s}}\Vert P_{N_1}u_1\Vert_{Y^s}\sum_{\substack{N_2,N_2\\N_2\geq N_3}}\Big(\frac{N_3}{N_2}\Big)^{\frac{2}{7}}\Vert P_{N_2}u_2\Vert_{Y^{\frac{1}{2}}}\Vert P_{N_3}u_3\Vert_{Y^{\frac{1}{2}}} \\&\lesssim \Big(\sum_{N_0}\Vert P_{N_0}v\Vert_{Y^{-s}}^2\Big)^{\frac{1}{2}}\Big(\sum_{N_1}\Vert P_{N_1}u_1\Vert_{Y^{s}}^2\Big)^{\frac{1}{2}}\Big(\sum_{\substack{N_2,N_3\\N_2\geq N_3}}\Big(\frac{N_3}{N_2}\Big)^{\frac{2}{7}}\Vert P_{N_2}u_2\Vert_{Y^{\frac{1}{2}}}^2\Big)^{\frac{1}{2}}\\&\quad\Big(\sum_{\substack{N_2,N_3\\N_2\geq N_3}}\Big(\frac{N_3}{N_2}\Big)^{\frac{2}{7}}\Vert P_{N_3}u_3\Vert_{Y^{\frac{1}{2}}}^2\Big)^{\frac{1}{2}}\\&\lesssim \Vert v\Vert_{Y^{-s}}\Vert u_1\Vert_{Y^{s}}\Vert u_2\Vert_{Y^{\frac{1}{2}}}\Vert u_3\Vert_{Y^{\frac{1}{2}}}.
\end{align*}
\noindent
\\{\bf Case B.} $N_1\sim N_2\geq N_0\geq N_3$. \vspace{0.13cm} \\
We continue similarly as above to get
\begin{align*}
&\Big|\int_0^T\int_{\mathbb{T}^d }\widetilde{u}_1\widetilde{u}_2\widetilde{u}_3\overline{v}\,dxdt\Big| \\&\lesssim\sum_{N_1\sim N_2\geq N_0\geq N_3}\sum_{C_j\sim C_k}\Vert P_{C_j}P_{N_1}u_1\Vert_{L_{t,x}^{\frac{7}{2}}}\Vert P_{C_k}P_{N_2}u_2\Vert_{L_{t,x}^{\frac{7}{2}}}\Vert P_{N_0}v\Vert_{L_{t,x}^{\frac{7}{2}}}\Vert P_{N_3}u_3\Vert_{L_{t,x}^{7}}\\&\lesssim\sum_{N_1\sim N_2\geq N_0\geq N_3}N_0^{\frac{3}{14}}N_3^{\frac{11}{14}}\Vert P_{N_0}v\Vert_{Y^0}\Vert P_{N_3}u_3\Vert_{Y^0}\sum_{C_j\sim C_k}\Vert P_{C_j}P_{N_1}u_1\Vert_{Y^0}\Vert P_{C_k}P_{N_2}u_2\Vert_{Y^0}\\&\lesssim\sum_{N_1\sim N_2\geq N_0\geq N_3}N_0^{s+\frac{3}{14}}N_1^{-s-\frac{1}{2}}\Vert P_{N_0}v\Vert_{Y^{-s}} \Vert P_{N_1}u_1\Vert_{Y^s}\Vert P_{N_2}u_2\Vert_{Y^{\frac{1}{2}}}N_3^{\frac{2}{7}}\Vert P_{N_3}u_3\Vert_{Y^{\frac{1}{2}}} \\&\lesssim\sum_{\substack{N_1,N_2\\N_1\sim N_2}}\Vert P_{N_1}u_1\Vert_{Y^{s}}\Vert P_{N_2}u_2\Vert_{Y^{\frac{1}{2}}}\sum_{\substack{N_0,N_3\\N_0\geq N_3}}\Big(\frac{N_3}{N_0}\Big)^{\frac{2}{7}}\Vert P_{N_0}v\Vert_{Y^{-s}}\Vert P_{N_3}u_3\Vert_{Y^{\frac{1}{2}}}\\&\lesssim \Vert v\Vert_{Y^{-s}}\Vert u_1\Vert_{Y^{s}}\Vert u_2\Vert_{Y^{\frac{1}{2}}}\Vert u_3\Vert_{Y^{\frac{1}{2}}}.
\end{align*}
Thus, we finish the proof.
\end{proof}

The proof of Theorem~\ref{thmwellposedfornls} then follows from Lemma~\ref{LEM:multi3d} by a standard argument.  
As it is analogous to the proof of Theorem~\ref{THM:main} above, we omit the details.

\end{document}